\documentclass[11pt]{amsart}
\usepackage{amsmath,amsthm,amsfonts,amssymb}
\usepackage[T1]{fontenc}
\addtolength{\textwidth}{3cm} \addtolength{\textheight}{1cm}
\usepackage{geometry}\geometry{margin=1.05in}

\usepackage{hyperref}
\usepackage{enumerate}
\newcommand{\Rde}{\mathbb{R}_{\varepsilon}^n}
\newcommand{\Rdme}{\mathbb{R}_{-\varepsilon}^n}
\newcommand{\la}{\lambda}
\newcommand{\M}{\mathcal{M}}
\newcommand{\mC}{\mathcal{C}}

\newcommand{\ld}{\mathcal{L}}
\newcommand{\mL}{\mathcal{L}}

\newcommand{\pp}{p\rightarrow p}

\newcommand{\TT}{t}

\newcommand{\ki}{\|\kappa\|_{\infty}}

\newcommand{\Leb}{\Lambda}
\newcommand{\mW}{\mathcal{W}}
\newcommand{\mM}{\mathcal{M}}
\newcommand{\mD}{\mathcal{D}}
\newcommand{\ph}{\varphi}
\newcommand{\Rdp}{\mathbb{R}_+^d}
\newcommand{\Rnp}{\mathbb{R}_+^n}
\newcommand{\Rlp}{\mathbb{R}_+^l}

\newcommand{\bnH}{\tilde{\bf{H}}}
\newcommand{\pst}{\phi_p^{\ast}}

\DeclareMathOperator{\Real}{Re}

\DeclareMathOperator{\Arg}{Arg}
\DeclareMathOperator{\Dom}{Dom}
\DeclareMathOperator{\supp}{supp}

\DeclareMathOperator{\Ran}{Ran}

\newtheorem{thm}{Theorem}[section]
\newtheorem{lem}[thm]{Lemma}

\newtheorem{cor}[thm]{Corollary}
\newtheorem{pro}[thm]{Proposition}

\theoremstyle{definition}
\newtheorem{defi}[equation]{Definition}

\theoremstyle{remark}
\newtheorem*{remark}{Remark}
\newtheorem*{remark1}{Remark 1}
\newtheorem*{remark2}{Remark 2}

\numberwithin{equation}{section}

\begin{document}

\title[]{Joint spectral multipliers for mixed systems of operators}
\author[B. Wr\'obel]{B\l a\.{z}ej Wr\'obel}
\thanks{}

\address{Dipartimento di Matematica e Applicazioni, Universit\`{a} di Milano-Bicocca,
via R. Cozzi 53 I-20125, Milano, Italy, 
\newline \&
Instytut Matematyczny, Uniwersytet Wroc\l awski, pl. Grunwaldzki 2/4, 50-384 Wroc\l aw, Poland}
\email{blazej.wrobel@math.uni.wroc.pl}

\subjclass[2010]{47A60, 42B15, 60G15}
\keywords{joint functional calculus, multiplier operator, Ornstein-Uhlenbeck operator}
\date{}
\maketitle

\begin{abstract}We obtain a general Marcinkiewicz-type multiplier theorem for mixed systems of strongly commuting operators $L=(L_1,\ldots,L_d);$ where some of the operators in $L$ have only a holomorphic functional calculus, while others have additionally a Marcinkiewicz-type functional calculus. Moreover, we prove that specific Laplace transform type multipliers of the pair $(\mathcal{L},A)$ are of certain weak type $(1,1).$ Here $\ld$ is the Ornstein-Uhlenbeck operator while $A$ is a non-negative operator having Gaussian bounds for its heat kernel. Our results include the Riesz transforms $A(\mathcal{L}+A)^{-1},$ $\mathcal{L}(\mathcal{L}+A)^{-1}.$ \end{abstract}
\numberwithin{equation}{section}
\section{Introduction}
Let $(X,\nu)$ be a $\sigma$-finite measure space. Consider a system $L=(L_1,\ldots,L_d)$ of strongly commuting non-negative self-adjoint operators on $L^2(X,\nu).$ By strong commutativity we mean that the spectral projections of $L_j,$ $j=1,\ldots,d,$ commute pairwise. In this case there exists the joint spectral resolution $E(\la)$ of the system $L.$ Moreover, for a bounded function $m\colon [0,\infty)^d\to \mathbb{C},$ the multiplier operator $m(L)$ can be defined on $L^2(X,\nu)$ by
    $$m(L)=\int_{[0,\infty)^d}m(\la)dE(\la).$$
By the (multivariate) spectral theorem, $m(L)$ is then bounded on $L^2(X,\nu).$ In this article we investigate under which assumptions on the multiplier function $m$ is it possible to extend $m(L)$ to a bounded operator on $L^p(X,\nu),$ $1<p<\infty.$

Throughout the paper we assume the $L^p(X,\nu),$ $1\leq p\leq \infty,$ contractivity of the heat semigroups corresponding to the operators $L_j,$ $j=1,\ldots,d.$ If this condition holds then we say that $L_j$ generates a symmetric contraction semigroup.

Then, by Cowling's \cite[Theorem 3]{Hanonsemi}, each of the operators $L_j,$ $j=1,\ldots,d,$ necessarily has an $H^{\infty}$ functional calculus on each $L^p(X,\nu),$ $1<p<\infty.$ This means that if $m_j$ is a bounded holomorphic function (of one complex variable) in a certain sub-sector $S_{\varphi_p}$ of the right complex half-plane, then the operator $m_j(L_j),$ given initially on $L^2(X,\nu)$ by the spectral theorem, is bounded on $L^p(X,\nu).$ However, it may happen that some of our operators also have the stronger Marcinkiewicz functional calculus. We say that $L_j$ has a Marcinkiewicz functional calculus, if every bounded function  $m_j\colon [0,\infty)\to \mathbb{C},$ which satisfies a certain Marcinkiewicz-type condition, see Definition \ref{defi:Marcon} (with $d=1$) gives rise to a bounded operator $m_j(L_j)$ on all $L^p(X,\nu),$ $1<p<\infty$ spaces. Throughout the paper we use letter $A$ to denote operators which have a Marcinkiewicz functional calculus. The formal definitions of the two kinds of functional calculi are given in Section \ref{sec:LA}.

Perhaps the most eminent difference between these functional calculi is the fact that the Marcinkiewicz functional calculus does not require the multiplier function to be holomorphic. In fact, every function which is sufficiently smooth, and compactly supported away from $0$ does satisfy the Marcinkiewicz condition.

For the single operator case various kinds of multiplier theorems have been proved in a great variety of contexts. The literature on the subject is vast; let us only name here \cite{CowDouMcYa} and \cite{Meda1} as the papers which have directly influenced our research.

As for the joint spectral multipliers for a system of commuting self-adjoint operators there are relatively fewer results. The first studied case was the one of partial derivatives $L=(\partial_1,\ldots,\partial_d),$ see \cite{Marorg} (the classical Marcinkiewicz multiplier theorem) and \cite{Horm1} (the classical H\"ormander multiplier theorem). The two theorems differ in the type of conditions imposed on the multiplier function $m$. The Marcinkiewicz multiplier theorem requires a product decay at infinity of the partial derivatives of $m,$ while the H\"ormander multiplier theorem assumes a radial decay. However, neither of the theorems is stronger than the other. Our paper pursues Marcinkiewicz-type multiplier theorems in more general contexts.

One of the first general cases of commuting operators, investigated in the context of a joint functional calculus, was that of sectorial operators (see \cite[Definition 1.1]{LanLanMer}). In \cite{Al} and \cite{AlFrMc} Albrecht, Franks, and McIntosh studied the existence of an $H^{\infty}$  joint functional calculus for a pair $L=(L_1,L_2)$ of commuting sectorial operators defined on a Banach space $B$. For some other results concerning holomorphic functional calculus for a pair of sectorial operators see \cite{LanLanMer} by Lancien, Lancien, and Le Merdy.

Marcinkiewicz-type (multivariate) multiplier theorems for specific commuting operators (i.e sublaplacians and central derivatives) on the Heisenberg (and related) groups were investigated by M\"uller, Ricci, and Stein in \cite{Mu:RiSt1}, \cite{Mu:RiSt2}, and by Fraser in \cite{Fr1}, \cite{Fr2}, \cite{Fr3}. The PhD thesis of Martini, \cite{Martini_Phd} (see also \cite{Martini_JFA} and \cite{Martini_Annales}), is a treatise of the subject of joint spectral multipliers for general Lie groups of polynomial growth. He proves various Marcinkiewicz-type and H\"ormander-type multiplier theorems, mostly with sharp smoothness thresholds.

In \cite{Sik} Sikora proved a H\"ormander-type multiplier theorem for a pair of non-negative self-adjoint operators $A_j$ acting on $L^2(X_j,\mu_j),$ $j=1,2,$ i.e.\ on separate variables\footnote{Then, the tensor products $A_1\otimes I$ and $I\otimes A_2$ commute strongly on $L^2(X_1\times X_2,\mu_1\otimes \mu_2)$}. In this article the author assumes that the kernels of the heat semigroup operators $e^{-t_jA_j},$ $t_j>0,$ $j=1,2,$ satisfy certain Gaussian bounds and that the underlying measures $\mu_j$ are doubling. Corollary \ref{corMar} of our paper is, in some sense, a fairly complete answer to a question posed in \cite[Remark 4]{Sik}.

The main purpose of the the present article is to prove (multivariate) multiplier theorems in the case when some of the considered operators have a Marcinkiewicz functional calculus, while others have only an $H^{\infty}$ functional calculus. Let us underline that, for the general results of Section \ref{sec:LA}, we only require strong commutativity and do not need that the operators in question arise from orthogonal expansions (cf. \cite{ja}) nor that they act on separate variables (cf. \cite{Sik}). In Theorem \ref{thm:LA} we show that under a certain Marcinkiewicz-type assumption on a bounded multiplier function $m$, the multiplier $m(L)$ extends to a bounded operator on $L^p(X,\nu).$ Once we realize that the only assumption we need is that of strong commutativity, the proof follows the scheme developed in \cite{ja}, \cite{erra} and \cite{jaOU}. The argument we use relies on Mellin transform techniques, together with $L^p$ bounds for the imaginary power operators, and square function estimates. For the convenience of the reader, we give a fairly detailed proof of Theorem \ref{thm:LA}.

From Theorem \ref{thm:LA} we derive two seemingly interesting corollaries. The first of these, Corollary \ref{corHinf}, gives a close to optimal $H^{\infty}$ joint functional calculus for a general system of strongly commuting operators that generate symmetric contraction semigroups. The second, Corollary \ref{corMar}, states that having a Marcinkiewicz functional calculus by each of the operators $A_j,$ $j=1,\ldots,d,$ is equivalent to having a Marcinkiewicz joint functional calculus by the system $A=(A_1,\ldots,A_d).$ Thus, in a sense, Corollary \ref{corMar} provides a most general possible Marcinkiewicz-type multiplier theorem for commuting operators.

The prototypical multipliers which fall under our theory have a product form $m_1(L_1)\cdots m_d(L_d).$ However the reader should keep in mind that Theorem \ref{thm:LA} applies to a much broader class of multiplier functions. Our condition \eqref{eq:Marcon} does not require $m$ to have a product form, but rather assumes it has a product decay. In particular Theorem \ref{thm:LA} implies $L^p,$ $1<p<\infty,$ boundedness of the imaginary power operators and Riesz transforms. In the case of a pair $(L,A)$ by imaginary powers we mean the operators $(L+A)^{iu},$ $u\in \mathbb{R},$ while by Riesz transforms we mean the operators $L(L+A)^{-1},$ $A(L+A)^{-1}$. Note however that due to the methods we use the growth of the $L^p$ norm of these operators is likely to be of order at least $(p-1)^{-4},$ $p\rightarrow 1^+.$ In particular, we do not obtain weak type $(1,1)$ results.

In Section \ref{sec:OA} we pursue a particular instance of our general setting in which  some weak type $(1,1)$ results can be proved. Namely, we restrict to the case of two operators: $\ld$ being the Ornstein-Uhlenbeck operator on $L^2(\mathbb{R}^d,\gamma)$, and $A$ being an operator acting on some other space $L^2(Y,\rho,\mu),$ where $(Y,\rho,\mu)$ is a space of homogeneous type. We also assume that the heat semigroup $e^{-tA}$ has a kernel satisfying Gaussian bounds and some Lipschitz estimates, see \eqref{sec:OA,eq:gausbound}, \eqref{sec:OA,eq:heatlipsch}, \eqref{sec:OA,eq:heatlipschngauss}. Here the operators do act on separate variables. The main result of this section is Theorem \ref{thm:OA}, which states that certain 'Laplace transform type' multipliers of the system $(\ld\otimes I,I\otimes A)$ are not only bounded on $L^p(\mathbb{R}^d\times Y, \gamma\otimes \mu),$ $1<p<\infty,$ but also from $L^1_{\gamma}(H^1(Y,\mu))$ to $L^{1,\infty}_{\gamma\otimes \mu}.$ Here $H^1(Y,\mu)$ denotes the atomic Hardy space $H^1$ in the sense of Coifman-Weiss. Section \ref{sec:OA} gives weak type $(1,1)$ results for joint multipliers in the case when one of the operators (the Ornstein-Uhlenbeck operator $\ld$, see \cite{hmm}) does not have a Marcinkiewicz functional calculus. It seems that so far such results were proved only for systems of operators all having a Marcinkiewicz functional calculus.
\section{Preliminaries}
\label{sec:Prem}

Let $L=(L_1,\ldots,L_d)$ be a system of non-negative self-adjoint operators on $L^2(X,\nu),$ for some $\sigma$-finite measure space $(X,\nu).$ We assume that the operators $L_j$ commute strongly, i.e.\ that their spectral projections $E_{L_j},$ $j=1,\ldots,d,$ commute pairwise. In this case, there exists the joint spectral measure $E$ associated with $L$ and determined uniquely by the condition
        \begin{equation*}
        L_j=\int_{[0,\infty)}\la_j dE_{L_j}(\la_j)=\int_{[0,\infty)^d} \la_j dE(\la),
        \end{equation*}
        see \cite[Theorem 4.10 and Theorems 5.21, 5.23]{schmu:dgen}.
        Consequently, for a Borel measurable function $m$ on $[0,\infty)^d,$ the multivariate spectral theorem allows us to define
         \begin{equation}
         \label{m(L)def}
        m(L)=m(L_1,\ldots,L_d)=\int_{[0,\infty)^d} m(\la) dE(\la)
        \end{equation}
        on the domain
         \begin{equation*}
    \Dom(m(L))=\bigg\{f\in L^2(X,\nu)\colon \int_{[0,\infty]^d}|m(\la)|^2dE_{f,f}(\la)<\infty\bigg\}.
    \end{equation*}
         Here $E_{f,f}$ is the complex measure defined by $E_{f,f}(\cdot)=\langle E(\cdot)f,f\rangle_{L^2(X,\nu)}.$

The crucial assumption we make is the $L^p(X,\nu)$ contractivity of the heat semigroups $\{e^{-tL_j}\},$ $j=1,\ldots,d.$ More precisely, we impose that, for each $1\leq p\leq \infty,$ and $t>0,$
\begin{equation*}
\tag{CTR}
\label{contra}
\|e^{-tL_j}f\|_{L^p(X,\nu)}\leq \|f\|_{L^p(X,\nu)},\qquad f\in L^p(X,\nu)\cap L^2(X,\nu).
\end{equation*}
This condition is often phrased as {\it the operator $L_j$ generates a symmetric contraction semigroup}. For technical reasons we often also impose
\begin{equation*}
\label{noatomatzero}
\tag{ATL}
E_{L_j}(\{0\})=0,\qquad j=1,\ldots,d.\end{equation*}
Note that under \eqref{noatomatzero} the formula \eqref{m(L)def} may be rephrased as
$$ m(L)=m(L_1,\ldots,L_d)=\int_{(0,\infty)^d} m(\la) dE(\la).$$

A particular instance of strongly commuting operators arises in product spaces, when $(X,\nu)=(\Pi_{j=1}^dX_j,\bigotimes_{j=1}^d\nu_j).$ In this case, for a self-adjoint or bounded operator $T$ on $L^2(X_j,\nu_j)$ we define
\begin{equation}
\label{eq:tensnot}
T\otimes I_{(j)}=I_{L^2(X_1,\nu_1)}\otimes \cdots \otimes I_{L^2(X_{j-1},\nu_{j-1})}\otimes T\otimes I_{L^2(X_{j+1},\nu_{j+1})} \otimes \cdots \otimes I_{L^2(X_d,\nu_d)}.
\end{equation} If $T$ is self-adjoint, then the operators $T\otimes I_{(j)}$ can be regarded as self-adjoint and strongly commuting operators on $L^2(X,\nu),$ see \cite[Theorem 7.23]{schmu:dgen} and \cite[Proposition A.2.2]{PhD}. Once again, let us point out that the general results of Section \ref{sec:LA} do not require that the operators act on separate variables. However, in Section \ref{sec:OA} we do consider a particular case of operators acting on separate variables.

Throughout the paper the following notation is used. The symbols $\mathbb{N}_0$ and $\mathbb{N}$ stand for the sets of non-negative and positive integers, respectively, while $\Rdp$ denotes $(0,\infty)^d$.

For a vector of angles $\varphi=(\varphi_1,\ldots,\varphi_d)\in (0,\pi/2]^d,$ we denote by ${\bf S}_{\varphi}$ the symmetric poly-sector (contained in the $d$-fold product of the right complex half-planes) $${\bf S}_{\varphi}=\{(z_1,\ldots,z_d)\in \mathbb{C}^d\colon z_j\neq0,\quad |\Arg(z_j)|<\varphi_j,\quad j=1,\ldots,d\}.$$ In the case when all $\varphi_j$ are equal to a real number $\varphi$ we abbreviate ${\bf S}_{\varphi}:={\bf S}_{(\varphi,\ldots,\varphi)}.$ However, it will be always clear from the context whether $\varphi$ is a vector or a number.

If $U$ is an open subset of $\mathbb{C}^d,$ the symbol $H^{\infty}(U)$ stands for the vector space of bounded functions on $U,$ which are holomorphic in $d$-variables. The space $H^{\infty}(U)$ is equipped with the supremum norm.

 If $\gamma$ and $\rho$ are real vectors (e.g.\ multi-indices), by $\gamma<\rho$ ($\gamma\leq \rho$) we mean that $\gamma_j<\rho_j$ ($\gamma_j\leq\rho_j$), for $j=1,\ldots,d.$ For any real number $x$ the symbol ${\bf x}$ denotes the vector $(x,\ldots,x)\in \mathbb{R}^d.$

        For two vectors $z,w\in \mathbb{C}^d$ we set $z^{w}=z_1^{w_1}\cdots z_d^{w_d},$ whenever it makes sense. In particular, for $\la=(\la_1,\ldots,\la_d)\in\Rdp$ and $u=(u_1,\ldots,u_d)\in\mathbb{R}^d,$ by $\la^{iu}$ we mean $\la_1^{iu_1}\cdots\la_d^{iu_d};$ similarly, for $N=(N_1,\ldots,N_d)\in\mathbb{N}^d,$ by $\la^N$ we mean $\la_1^{N_1}\cdots \la_d^{N_d}.$ This notation is also used for operators, i.e.\, for $u\in\mathbb{R}^d$ and $N\in\mathbb{N}^d$ we set
        $$L^{iu}=L_1^{iu_1}\cdots L_d^{iu_d},\qquad L^N=L_1^{N_1}\cdots L_d^{N_d}.$$
        Note that, due to the assumption on the strong commutativity, the order of the operators in the right hand sides of the above equalities is irrelevant.

        By $\langle z, w \rangle,$ $z,w\in\mathbb{C}^d$ we mean the usual inner product on $\mathbb{C}^d.$ Additionally, if instead of $w\in\mathbb{C}^d$ we take a vector of self-adjoint operators $L=(L_1,\ldots,L_d),$ then, by $\langle z, L \rangle$ we mean $\sum_{j=1}^d z_j L_j.$

        The symbol $\frac{d\la}{\la}$ (in some places we write $\frac{dt}{t}$ or $\frac{da}{a}$ instead) stands for the product Haar measure on $(\Rdp,\cdot),$ i.e.\ $$\frac{d\la}{\la}=\frac{d\la_1}{\la_1}\cdots\frac{d\la_d}{\la_d}.$$ For a function $m\in L^1(\Rdp,\, \frac{d\la}{\la}),$ we define its $d$-dimensional Mellin transform by \begin{equation}\label{eq:Mellin}\M(m)(u)=\int_{\Rdp}\la^{-iu}\,m(\la)\,\frac{d\la}{\la},\qquad u\in\mathbb{R}^d.\end{equation} It is well known that $\M$ satisfies
        the Plancherel formula
        \begin{equation*}
    \int_{\Rdp}|m(\la)|^2\,\frac{d\la}{\la}=\frac{1}{(2\pi)^{d}}\int_{\mathbb{R}^d}|\M(m)(u)|^2\,du,\qquad m\in L^2(\Rdp,\frac{d\la}{\la}),
    \end{equation*}
    and the inversion formula
    \begin{equation*}
    m(\la)=\frac{1}{(2\pi)^{d}}\int_{\mathbb{R}^d}\M(m)(u)\la^{iu}\,du,\qquad \la=(\la_1,\ldots,\la_d)\in \Rdp,
    \end{equation*}
    for $m$ such that both $m\in L^1(\Rdp,\frac{d\la}{\la})$ and $\M(m)\in L^1(\mathbb{R}^d,du).$

        Throughout the paper we use the variable constant convention, i.e.\ the constants (such as $C,$ $C_p$ or $C(p),$ etc.) may vary from one occurrence to another. In most cases we shall however keep track of the parameters on which the constant depends, (e.g.\ $C$ denotes a universal constant, while $C_p$ and $C(p)$ denote constants which may also depend on $p$). The symbol $a\lesssim b$ means that $a\leq C b,$ with a constant $C$ independent of significant quantities.

Let $B_1,B_2$ be Banach spaces and let $F$ be a dense subspace of $B_1.$ We say that a linear operator $T\colon F\to B_2$ is bounded, if it has a (unique) bounded extension to $B_1.$

\section{General multiplier theorems}
\label{sec:LA}

Throughout this section, for the sake of brevity, we write $L^p$ instead of $L^p(X,\nu)$ and $\|\cdot\|_p$ instead of $\|\cdot\|_{L^p(X,\nu)}.$ The symbol $\|\cdot\|_{p\to p}$ denotes the operator norm on $L^p.$

The first $n$ operators in the system $L_1,\ldots,L_n,$ $0\leq n\leq d$ are assumed to have an $H^{\infty}$ functional calculus. We say that a single operator $L$ has an {\it $H^{\infty}$ functional calculus on $L^p,$} $1<p<\infty$, whenever we have the following: there is a sector $S_{\varphi_p}=\{z\in \mathbb{C}\colon |\Arg (z)|<\varphi_p\},$ $\varphi_p<\pi/2,$ such that, if $m$ is a bounded holomorphic function on $S_{\varphi_p},$ then $\|m(L)\|_{L^p(X,\nu)\to L^p(X,\nu)}\leq C_{p}\|m\|_{H^{\infty}(S_{\varphi_p})}.$ The phrase '$L$ has an $H^{\infty}$ functional calculus' means that $L$ has an $H^{\infty}$ functional calculus on $L^p$ for every $1<p<\infty.$ An analogous terminology is used when considering a system of operators $L=(L_1,\ldots,L_d)$ instead of a single operator. We say that $L$ has an { \it $H^{\infty}$ joint functional calculus}, whenever the following holds: for each $1<p<\infty$ there is a poly-sector ${\bf S}_{\varphi_p},$ $\varphi_p=(\varphi_p^1,\ldots,\varphi_p^d)\in[0,\pi/2)^d,$ such that if $m$ is a bounded holomorphic function in several variables on ${\bf S}_{\varphi_p},$ then $\|m(L)\|_{L^p(X,\nu)\to L^p(X,\nu)}\leq C_{p}\|m\|_{H^{\infty}({\bf S}_{\varphi_p})}.$

The last $l$ operators in the system $L,$ i.e. $L_{n+1},\ldots,L_d,$ with $n+l=d,$ are assumed to have additionally a Marcinkiewicz functional calculus. Therefore, according with our convention, we use letter $A$ to denote these operators, i.e. $A_j=L_{n+j},$ $j=1,\ldots,l.$ In order to define the Marcinkiewicz functional calculus and formulate the main theorem of the paper we need the following definition.
\begin{defi}
\label{defi:Marcon}
We say that $m\colon\Rdp\to \mathbb{C}$ satisfies the Marcinkiewicz  condition of order $\rho=(\rho_1,\ldots,\rho_d)\in \mathbb{N}_0^d,$ if $m$ is a bounded function having partial derivatives up to order $\rho\footnote{i.e.\ $\partial^{\gamma}(m)$ exist for $\gamma=(\gamma_1,\ldots,\gamma_d)\leq \rho$},$  and for all multi-indices $\gamma=(\gamma_1,\ldots,\gamma_d)\leq\rho$
\begin{equation}\label{eq:Marcon}
\|m\|_{(\gamma)}:=\sup_{R_1,\ldots,R_d>0}\int_{R_1<\la_1<2R_1}\ldots\int_{R_d<\la_d<2R_d}|\la^{\gamma}\partial^{\gamma}m(\la)|^2\,\frac{d\la}{\la}<\infty.
\end{equation}
\end{defi}

If $m$ satisfies the Marcinkiewicz condition of order $\rho,$ then we set
\begin{equation*}
\|m\|_{Mar,\rho}:=\sup_{\gamma\leq \rho}\|m\|_{(\gamma)}.
\end{equation*}

We say that a single operator $A$ has a {\it Marcinkiewicz functional calculus}\footnote{In the single operator case it might seem better to use the term 'H\"ormander functional calculus', cf.\ \cite[Theorem 2]{Meda1}. We use the name of Marcinkiewicz to accord with the naming of the multi-dimensional condition.} of order $\rho>0$, whenever the following holds: if the multiplier function $m$ satisfies the one-dimensional (i.e.\ with $d=1$) Marcinkiewicz condition \eqref{eq:Marcon} of order $\rho,$ then the multiplier operator $m(A)$ is bounded on all $L^p(X,\nu),$ $1<p<\infty,$ and $\|m(A)\|_{L^p(X,\nu)\to L^p(X,\nu)}\leq C_{p}\|m\|_{Mar,\rho}.$  Similarly, to say that a system $A=(A_1,\ldots,A_l)$ has a {\it Marcinkiewicz joint functional calculus} of order $\rho=(\rho_1,\ldots,\rho_l)\in \Rlp$ we require the following condition to be true: if the multiplier function $m$ satisfies the $d$-dimensional Marcinkiewicz condition \eqref{eq:Marcon} of order $\rho=(\rho_1,\ldots,\rho_d),$ then the multiplier operator $m(L)$ is bounded on $L^p(X,\nu),$ $1<p<\infty,$ and
         $\|m(L)\|_{L^p(X,\nu)\to L^p(X,\nu)}\leq C_{p}\|m\|_{Mar,\rho}.$

What concerns the operators $L_1,\ldots,L_n,$ we assume that
        there exist $\theta=(\theta_1,\ldots,\theta_n)\in [0,\infty)^n$ and $\phi_p=(\phi_p^1,\ldots,\phi_p^n)\in (0,\pi/2)^n,$ such that
\begin{equation}
\label{imaL}
\|L^{iu}\|_{p\to p}\leq \mC(p,L)\,\prod_{j=1}^{n}(1+|u_j|)^{\theta_j|1/p-1/2|}\exp(\phi_p^j|u_j|),\qquad u\in \mathbb{R}^n.
\end{equation}
It can be deduced that the above condition is (essentially) equivalent to each $L_j,$ $j=1,\ldots,n,$ having an $H^{\infty}$ functional calculus on $L^p$ in the sector
$$S_{\phi_p^j}=\{z_j\in\mathbb{C}\colon |\Arg(z_j)|<\phi_p^j\},$$
see \cite[Section 5]{CowDouMcYa}. Moreover, by a recent result of Carbonaro and Dragi\v{c}evi\'c \cite{Carb-Drag} (see also \cite{Hanonsemi}), every operator for which \eqref{contra} holds satisfies \eqref{imaL} with the optimal angle $\phi_p^j=\pst:=\arcsin|2/p-1|$ and $\theta_j=\theta=3.$ Put in other words every operator generating a symmetric contraction semigroup has an $H^{\infty}$ functional calculus on $L^p$ in every sector larger than $S_{\pst}.$ The angle $\pst$ is optimal among general operators satisfying \eqref{contra}, however in many concrete cases it can be significantly sharpened.

When it comes to the operators $A_1,\ldots,A_l,$ we impose that there is a vector of positive real numbers $\sigma=(\sigma_1,\ldots,\sigma_l),$ such that for every $1<p<\infty$ and $j=1,\ldots,l$
\begin{equation}
\label{imaA}
\|A_j^{iv_j}\|_{p\to p}\leq \mC(p,A)\, \prod_{j=1}^{l}(1+|v_j|)^{\sigma_j|1/p-1/2|},\qquad v\in\Rlp.
\end{equation}
Condition \eqref{imaA} is equivalent to each $A_1,\ldots,A_l$ having a Marcinkiewicz functional calculus, see \cite[Theorem 4]{Meda1}.

For a function $m\colon S_{\phi_p}\times (0,\infty)^l\rightarrow \mathbb{C}$ and $\varepsilon\in \{-1,1\}^n$ set $$m^{\phi_p}_{\varepsilon}(\lambda,a)=m(e^{i\varepsilon_1\phi_p^1}\la_1,\ldots,e^{i\varepsilon_n\phi_p^n}\la_n,a_1,\ldots,a_l),\qquad (\la,a)\in \mathbb{R}^{n+l}_+.$$ Note that, if for fixed $a\in \mathbb{R}^l$ the function $m(\cdot,a)\in H^{\infty}(S_{\varphi_p}),$ then  the boundary value functions $\la\mapsto m^{\phi_p}_{\varepsilon}(\lambda,a)$ exist by (multivariate) Fatou's theorem. In the case when all $\phi_p^j$ are equal to one angle $\phi_p$ we abbreviate $m^{\phi_p}_{\varepsilon}=m^{(\phi_p,\ldots,\phi_p)}_{\varepsilon}.$

Throughout this section we impose the assumptions of Sections \ref{sec:Prem} and \ref{sec:LA}; in particular both \eqref{noatomatzero} and \eqref{contra} as well as \eqref{imaL} and \eqref{imaA}. The following is our main theorem.
\begin{thm}
\label{thm:LA}
Fix $1<p<\infty$ and let $m\colon {\bf S}_{\phi_p}\times \mathbb{R}^l \to \mathbb{C}$ be a bounded function with the following property: for each fixed $a\in \mathbb{R}_+^l,$ $m(\cdot,a)\in H^{\infty}({\bf S}_{\phi_p}),$ and all the functions $$\mathbb{R}^d\ni (\la,a)\mapsto m^{\phi_p}_{\varepsilon}(\la,a),\qquad \textrm{where } \varepsilon\in \{-1,1\}^{n},$$ satisfy the $d$-dimensional Marcinkiewicz condition \eqref{eq:Marcon} of some order $\rho>|1/p-1/2|(\theta,\sigma)+{\bf 1},$ where $\rho=(\rho_1,\ldots,\rho_n,\rho_{n+1},\ldots,\rho_d).$ Then the multiplier operator $m(L,A)$ is bounded on $L^p$ and
$$\|m(L,A)\|_{p\to p}\leq C_{p,d}\, \mC(p,L)\,\mC(p,A)\, \sup_{\varepsilon \in\{-1,1\}^n}\|m^{\phi_p}_{\varepsilon}(\lambda,a)\|_{Mar,\rho}.$$
\end{thm}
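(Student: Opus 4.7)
My plan is to follow the Mellin transform scheme of \cite{ja,erra,jaOU}, which splits into three movements. \emph{Mellin inversion and contour shift.} On the dense subclass of $f\in L^p\cap L^2$ whose joint spectral measure is concentrated in a compact subset of $\Rdp$ (dense thanks to \eqref{noatomatzero} and \eqref{contra}), the multivariate spectral theorem combined with Mellin inversion yields
\[
  m(L,A) f \;=\; \frac{1}{(2\pi)^d}\int_{\mathbb{R}^d}\mathcal{M}(m)(u,v)\,L^{iu}A^{iv} f\,du\,dv.
\]
The growth $\|L^{iu}\|_{p\to p}\lesssim \prod_{j=1}^n (1+|u_j|)^{\theta_j|1/p-1/2|}e^{\phi_p^j|u_j|}$ from \eqref{imaL} is compensated by shifting, for each $u$, the Mellin contour in the first $n$ variables into the polysector ${\bf S}_{\phi_p}$. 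Cauchy's theorem, justified by $m(\cdot,a)\in H^\infty({\bf S}_{\phi_p})$, yields for every $\varepsilon\in\{-1,1\}^n$ the identity
\[
  \mathcal{M}(m)(u,v) \;=\; \exp\Bigl(\sum_{j=1}^n\varepsilon_j\phi_p^j u_j\Bigr)\,\mathcal{M}(m^{\phi_p}_\varepsilon)(u,v),
\]
and the choice $\varepsilon_j=-\operatorname{sgn}(u_j)$ generates the decay $\exp(-\sum_j\phi_p^j|u_j|)$ that exactly cancels the exponential factor in \eqref{imaL}. The problem reduces to bounding an integral of $|\mathcal{M}(m^{\phi_p}_\varepsilon)|$ against the purely polynomial weight $W_p(u,v)=\prod_{j=1}^n(1+|u_j|)^{\theta_j|1/p-1/2|}\prod_{j=1}^l(1+|v_j|)^{\sigma_j|1/p-1/2|}$ from \eqref{imaA}.

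\emph{Dyadic decomposition.} Since $\|\cdot\|_{Mar,\rho}$ is a supremum of \emph{local} $L^2$ quantities, a direct global Plancherel identity on $\mathcal{M}(m^{\phi_p}_\varepsilon)$ is not available. I introduce a smooth resolution of unity $1=\sum_{K\in\mathbb{Z}^d}\Psi_K$ on $\Rdp$, with $\Psi_K(\la,a)=\prod_{j=1}^n\psi(2^{-K_j}\la_j)\prod_{j=1}^l\psi(2^{-K_{n+j}}a_j)$ for $\psi\in C_c^\infty((1/2,2))$ satisfying $\sum_{k\in\mathbb{Z}}\psi(2^{-k}t)=1$. After the logarithmic substitution $\la_j=2^{K_j}e^{\mu_j}$, $a_j=2^{K_{n+j}}e^{\nu_j}$, each summand $m^{\phi_p}_\varepsilon\Psi_K$ becomes a function $g_K$ supported in a fixed cube of $\mathbb{R}^d$, whose Sobolev norm $\|g_K\|_{H^\rho(\mathbb{R}^d)}$ is bounded uniformly in $K$ by $\|m^{\phi_p}_\varepsilon\|_{Mar,\rho}$ (dilation factors $2^{K_j}$ produced by differentiating $\Psi_K$ are absorbed by $\la^\gamma$ on $\supp\Psi_K$). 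Mellin--Plancherel then gives, uniformly in $K$,
\[
  \int_{\mathbb{R}^d}\prod_{j=1}^n(1+|u_j|^2)^{\rho_j}\prod_{j=1}^l(1+|v_j|^2)^{\rho_{n+j}}\,|\mathcal{M}(m^{\phi_p}_\varepsilon\Psi_K)(u,v)|^2\,du\,dv \;\lesssim\; \|m^{\phi_p}_\varepsilon\|^2_{Mar,\rho}.
\]
Cauchy--Schwarz with the dummy product weight $\prod(1+|u_j|^2)^{-\rho_j}\prod(1+|v_j|^2)^{-\rho_{n+j}}$ --- integrable because each $\rho_j>1$ --- converts this into the per-block operator bound $\|(m^{\phi_p}_\varepsilon\Psi_K)(L,A)\|_{p\to p}\lesssim\|m^{\phi_p}_\varepsilon\|_{Mar,\rho}$, uniformly in $K$. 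The hypothesis $\rho>|1/p-1/2|(\theta,\sigma)+{\bf 1}$ is precisely what is needed for the dummy weight to be integrable while still dominating $W_p$.

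\emph{Recomposition via a square function.} To sum over the blocks, I factor $m^{\phi_p}_\varepsilon\Psi_K=(m^{\phi_p}_\varepsilon\tilde\Psi_K)\Psi_K$ using a slightly wider cutoff $\tilde\Psi_K\in C_c^\infty$ equal to $1$ on $\supp\Psi_K$, and invoke duality:
\[
  |\langle m(L,A) f,g\rangle| \;\le\; \Bigl\|\Bigl(\sum_K|\Psi_K(L,A) f|^2\Bigr)^{1/2}\Bigr\|_p\cdot\Bigl\|\Bigl(\sum_K|(m^{\phi_p}_\varepsilon\tilde\Psi_K)^\ast(L,A) g|^2\Bigr)^{1/2}\Bigr\|_{p'}.
\]
The first factor is dominated by $\|f\|_p$ via a tensor product of Stein's Littlewood--Paley inequalities for the symmetric contraction semigroups $\{e^{-tL_j}\}$, $\{e^{-tA_j}\}$; the second factor is dominated by $\|m^{\phi_p}_\varepsilon\|_{Mar,\rho}\|g\|_{p'}$ by applying an $\ell^2$-vector-valued version of the per-block argument above to the adjoints (whose imaginary powers satisfy bounds formally identical to \eqref{imaL}--\eqref{imaA} with the same angles $\phi_p^j$, by self-adjointness of $L_j$ and $A_j$). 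The main obstacle of the plan is precisely this last step: lifting the scalar per-block Mellin estimate to an $\ell^2$-valued statement, and verifying the joint Littlewood--Paley square function in the purely abstract strongly-commuting setting where \eqref{contra} and \eqref{noatomatzero} are the only structural inputs available.
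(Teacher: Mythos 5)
Your first movement (Mellin inversion plus a contour shift that trades the exponential factor $e^{\phi_p^j|u_j|}$ in \eqref{imaL} against $e^{-\sum_j\varepsilon_j\phi_p^ju_j}$, followed by integration by parts against a dyadic partition to extract the polynomial decay from the Marcinkiewicz condition) is exactly the engine of the paper's proof. But the paper runs that engine on a \emph{scalar} quantity only: it never decomposes the operator. It regularizes $m$ to $m_{N,t}(\la)=t^N\la^Ne^{-\langle t,\la\rangle}m(\la)$ (which, unlike $m$ itself, lies in $L^1(\Rdp,\frac{d\la}{\la})$, so its Mellin transform is an honest function), proves the pointwise decay of $\sup_t|\M(m_{N,t})(u,v)|$, and then recovers $m(L,A)f$ from $t^NL^Ne^{-\langle t,L\rangle}m(L,A)f$ by sandwiching between two applications of the \emph{continuous} semigroup square function $g_N(f)^2=\int_{\Rdp}|t^NL^Ne^{-\langle t,L\rangle}f|^2\,\frac{dt}{t}$, whose two-sided $L^p$ bounds come from the Albrecht--Franks--McIntosh $H^\infty$ calculus (Theorems \ref{thm:gfun} and \ref{thm:gen}). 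The dyadic partition $\sum_k\Psi_k$ appears only inside the estimate of the Mellin integral, never as a decomposition of the operator.

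Your second and third movements are where the plan breaks. First, the discrete square function $\bigl(\sum_K|\Psi_K(L,A)f|^2\bigr)^{1/2}$ with compactly supported cutoffs $\Psi_K$ is not available here: for the operators $L_1,\ldots,L_n$ the only structural input is the $H^\infty$ calculus in a proper sector, and $\psi(2^{-k}\cdot)$ is not holomorphic, so the single operators $\Psi_K(L_j)$ need not be $L^p$-bounded at all (the Ornstein--Uhlenbeck operator of Section \ref{sec:OA} is a concrete instance where compactly supported multipliers fail on $L^p$, $p\neq2$). Stein's Littlewood--Paley theory for symmetric contraction semigroups gives continuous $g$-functions built from $e^{-tL}$, not dyadic spectral projections; this is precisely why the paper works with $z^Ne^{-z}$, which is holomorphic and decaying, rather than with bump functions. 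Second, your per-block bound $\|(m^{\phi_p}_\varepsilon\Psi_K)(L,A)\|_{p\to p}\lesssim\|m^{\phi_p}_\varepsilon\|_{Mar,\rho}$ cannot be obtained by the announced route: once you multiply by the non-holomorphic cutoff $\Psi_K$, the contour shift that kills the $e^{\phi_p^j|u_j|}$ growth of $\|L^{iu}\|_{p\to p}$ is no longer available for the block, so the weighted Plancherel bound on $\M(m^{\phi_p}_\varepsilon\Psi_K)$ does not translate into an operator bound. Finally, the blocks you propose to resum are built from the boundary-value function $m^{\phi_p}_\varepsilon$, and $\sum_K(m^{\phi_p}_\varepsilon\Psi_K)(L,A)=m^{\phi_p}_\varepsilon(L,A)$ is not the operator $m(L,A)$: the boundary values enter the argument only through their Mellin transforms (via Cauchy's theorem), not as spectral multipliers in their own right. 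The fix is to abandon the operator-level dyadic decomposition entirely and route everything through the regularized symbol $m_{N,t}$ and the semigroup square function, as in Theorems \ref{thm:gen} and \ref{thm:gfun}.
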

\begin{remark1}
 If $l=0$ ($n=d$) then we consider only operators $L_1,\ldots,L_d$ with an $H^{\infty}$ functional calculus, while if $n=0$ ($l=d$) then we consider only operators $A_1,\ldots,A_d,$ with a Marcinkiewicz functional calculus. In the latter case we do not require $m$ to be holomorphic. We only assume that it satisfies \eqref{eq:Marcon} of some order $\rho>|1/p-1/2|\sigma+{
\bf 1}.$
\end{remark1}
\begin{remark2}
 From the theorem it follows that if $m(e^{i\varepsilon\phi_p}\la,a),$ $\varepsilon\in \{-1,1\}^{n},$ satisfy the Marcinkiewicz condition of some order $\rho>\frac12(\theta,\sigma)+{\bf 1},$ then $m(L,A)$ is in fact bounded on all $L^p$ spaces, $1<p<\infty.$
\end{remark2}

Before proving Theorem \ref{thm:LA} let us first state and prove two corollaries.

The first of these corollaries provides an $H^{\infty}$ joint functional calculus for a general system of strongly commuting operators $L_j,$ $j=1,\ldots,d,$ satisfying \eqref{contra} and \eqref{noatomatzero}. Corollary \ref{corHinf} generalizes \cite[Theorem 1]{Carb-Drag} to systems of commuting operators; although it is slightly weaker than \cite[Theorem 1]{Carb-Drag} in the case $d=1.$ Recall that $\pst=\arcsin|2/p-1|.$
\begin{cor}
\label{corHinf}
Let $L=(L_1,\ldots,L_d)$ be a general system of non-negative self-adjoint strongly commuting operators that satisfy both \eqref{contra} and \eqref{noatomatzero}. Fix $1<p<\infty$ and let $m$ be a bounded holomorphic functions of $d$-variables in ${\bf S}_{\pst}.$ If for some $\rho>(5/2,\ldots,5/2)$ we have
$$\sup_{\varepsilon \in\{-1,1\}^d}\|m^{\pst}_{\varepsilon}(\lambda,a))\|_{Mar,\rho}<\infty,$$
then $m(L)$ is bounded on $L^p$ and
 $$\|m(L)\|_{p\to p}\leq C_{p,d}\, \mC(p,L)\, \sup_{\varepsilon \in\{-1,1\}^d}\|m^{\pst}_{\varepsilon}(\lambda,a))\|_{Mar,\rho}.$$
\end{cor}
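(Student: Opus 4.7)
The plan is to deduce Corollary~\ref{corHinf} as a direct application of Theorem~\ref{thm:LA} in the special case $n=d$, $l=0$ (covered by Remark~1 following the theorem), in which there are no operators $A_j$ and the Marcinkiewicz-indexing vector $\sigma$ is empty. Only three items need to be checked: the imaginary power bound~\eqref{imaL}, the smoothness threshold $\rho>|1/p-1/2|(\theta,\sigma)+\mathbf{1}$, and the identification of the multiplier $m$ with a function on $\mathbf{S}_{\phi_p}\times\mathbb{R}_+^l$ in the sense of Theorem~\ref{thm:LA}.

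For~\eqref{imaL} I would invoke the Carbonaro--Dragi\v{c}evi\'c theorem \cite{Carb-Drag} quoted just after~\eqref{imaL}: since each $L_j$ generates a symmetric contraction semigroup by~\eqref{contra}, one has
$$\|L_j^{iu_j}\|_{p\to p}\le C_p\,(1+|u_j|)^{3|1/p-1/2|}\exp(\pst|u_j|),\qquad u_j\in\mathbb{R},$$
for every $j=1,\ldots,d$. Strong commutativity of the $L_j$'s, together with the product formula $L^{iu}=L_1^{iu_1}\cdots L_d^{iu_d}$, then yields~\eqref{imaL} with $\theta=(3,\ldots,3)$ and $\phi_p=(\pst,\ldots,\pst)$. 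For the smoothness threshold, observe that for any fixed $1<p<\infty$ we have $|1/p-1/2|<1/2$ strictly, so every component of $|1/p-1/2|(\theta,\sigma)+\mathbf{1}$ equals $3|1/p-1/2|+1<5/2$. Consequently any $\rho>(5/2,\ldots,5/2)$ exceeds this vector componentwise, which is exactly the hypothesis of Theorem~\ref{thm:LA}.

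Finally, I would set $m(\la,a):=m(\la)$ (trivially independent of the empty $a$-variable), note that the boundary-value functions $m^{\pst}_{\varepsilon}$ coincide with those appearing in the statement of the corollary, and apply Theorem~\ref{thm:LA} to conclude
$$\|m(L)\|_{p\to p}\le C_{p,d}\,\mC(p,L)\sup_{\varepsilon\in\{-1,1\}^d}\|m^{\pst}_{\varepsilon}\|_{Mar,\rho},$$
with the factor $\mC(p,A)$ suppressed because it is vacuous. There is no real obstacle: the whole content of the corollary lies in Theorem~\ref{thm:LA} together with the sharp imaginary power bound of~\cite{Carb-Drag}, and the only place where the specific threshold $5/2$ enters the argument is in the elementary inequality $3|1/p-1/2|+1<5/2$, which holds for every $p\in(1,\infty)$.
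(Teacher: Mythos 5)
Your proposal is correct and follows essentially the same route as the paper: verify \eqref{imaL} with angle $\pst$ via the Carbonaro--Dragi\v{c}evi\'c theorem, check that $\rho>{\bf 5/2}$ beats the threshold $3|1/p-1/2|+{\bf 1}$ because $|1/p-1/2|<1/2$, and apply Theorem \ref{thm:LA} with $n=d,$ $l=0$. The only cosmetic difference is that the paper obtains \eqref{imaL} with arbitrary $\theta_j>3$ by interpolating the $L^p$ bound from \cite{Carb-Drag} with $\|L_j^{iu_j}\|_{2\to 2}\leq 1,$ whereas you quote $\theta_j=3$ directly; since the threshold inequality is strict, both variants yield the corollary.
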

\begin{proof}
Using \cite[Theorem 1]{Carb-Drag} to the imaginary powers $L_j^{iu_j},$ $j=1,\ldots,d,$ and interpolating with the bound $\|L_j^{iu_j}\|_{2\to 2}\leq 1,$ we obtain \eqref{imaL} with arbitrary $\theta_j/2>3/2$ and $\phi_p^j=\pst.$ Now, an application of Theorem \ref{thm:LA} (with $n=d$) gives the desired boundedness.
\end{proof}
\begin{remark1}
Note that, as we do not require $m$ to be holomorphic in a bigger sector, our theorem is stronger than a combination of \cite[Theorem 5.4]{AlFrMc} and \cite[Theorem 1]{Carb-Drag} given in \cite[Proposition 3.2]{jaRieszGen}.
\end{remark1}
\begin{remark2}
 Examples of multiplier functions satisfying the assumptions of the corollary include $m_{j}^{\sigma}(\la)=\la_j^{\sigma}/(\la_1+\ldots \la_d)^{-\sigma},$ where $\sigma>0.$ The operators $m_{j}^{\sigma}(L),$ $j=1,\ldots,d,$ are intimately connected with the Riesz transforms, see \cite{jaRieszGen}.
\end{remark2}

The second corollary treats the case when all the considered operators have a Marcinkiewicz functional calculus, i.e.\ $n=0$ and $l=d.$ It implies that a system $A=(A_1,\ldots,A_d)$ has a Marcinkiewicz joint functional calculus of a finite order if and only if each $A_j,$ $j=1,\ldots,d,$ has a Marcinkiewicz functional calculus of a finite order.
\begin{cor}
\label{corMar}
We have the following:
\begin{itemize}
\item[(i)] If, for each $j=1,\ldots,d,$ the operator $A_j$ has a Marcinkiewicz functional calculus of order $\rho_j,$ then the system $A=(A_1,\ldots,A_d)$ has a Marcinkiewicz joint functional calculus of every order greater than $\rho+{\bf 1}.$
\item[(ii)] If the system $A=(A_1,\ldots,A_d)$ has a Marcinkiewicz joint functional calculus of order $\rho,$ then, for each $j=1,\ldots,l,$ the operator $A_j$ has a Marcinkiewicz functional calculus of order $\rho_j.$
\end{itemize}
\end{cor}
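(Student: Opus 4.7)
The two parts go in opposite directions and use different tools.

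For part (ii) I would use a direct lifting argument. Given a one-variable multiplier $m_j\colon(0,\infty)\to\mathbb{C}$ satisfying the one-dimensional Marcinkiewicz condition of order $\rho_j$, form the $d$-variable multiplier $\tilde m(\la_1,\ldots,\la_d):=m_j(\la_j)$. A direct calculation shows that $\partial^{\gamma}\tilde m$ vanishes unless $\gamma_i=0$ for every $i\neq j$, in which case it equals $\partial_{\la_j}^{\gamma_j}m_j(\la_j)$. Integrating over dyadic annuli in the remaining $d-1$ coordinates contributes only the factor $(\log 2)^{d-1}$, so $\|\tilde m\|_{Mar,\rho}\lesssim\|m_j\|_{Mar,\rho_j}$. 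Since $\tilde m$ depends only on the $j$-th variable, the multivariate spectral theorem yields $\tilde m(A)=m_j(A_j)$, and the hypothesized joint Marcinkiewicz calculus of $A$ immediately produces $\|m_j(A_j)\|_{p\to p}\lesssim\|m_j\|_{Mar,\rho_j}$, which is exactly the Marcinkiewicz calculus of $A_j$ of order $\rho_j$.

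For part (i) the plan is to invoke Theorem~\ref{thm:LA} with $n=0$, $l=d$ (only Marcinkiewicz operators are present). The one nontrivial input to verify is the imaginary-power bound \eqref{imaA}, which by Meda's equivalence \cite[Theorem 4]{Meda1} is precisely the content of the hypothesis that each $A_j$ has a Marcinkiewicz calculus of order $\rho_j$; carrying the exponents through that equivalence (morally, interpolating the $p$-uniform polynomial bound on $\|A_j^{iv}\|_{p\to p}$ against the $L^2$ isometry) gives \eqref{imaA} with $\sigma_j$ arbitrarily close to $2\rho_j$. Theorem~\ref{thm:LA} then produces boundedness of $m(A)$ on $L^p$ whenever $m$ satisfies the $d$-dimensional Marcinkiewicz condition of order exceeding $|1/p-1/2|\sigma+\mathbf{1}$; since $|1/p-1/2|\leq 1/2$ and $\sigma$ can be taken arbitrarily close to $2\rho$, the worst-case threshold over $1<p<\infty$ is $\rho+\mathbf{1}$, as claimed.

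The only delicate point sits in part (i): the quantitative matching between the individual Marcinkiewicz orders $\rho_j$ and the exponent $\sigma$ appearing in \eqref{imaA}. The factor $2$ supplied by Meda's equivalence exactly cancels the worst-case factor $1/2$ coming from $|1/p-1/2|$, and the $+\mathbf{1}$ loss built into Theorem~\ref{thm:LA} accounts for the remaining shift, yielding the clean threshold $\rho+\mathbf{1}$ uniformly in $p$. Part (ii), by contrast, is essentially formal once the lift $\tilde m$ is written down and its partial derivatives are computed.
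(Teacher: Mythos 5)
Your proposal is correct and follows essentially the same route as the paper: part (i) is obtained by deriving \eqref{imaA} with $\sigma_j$ arbitrarily close to $2\rho_j$ (via the bound $\|A_j^{iv_j}\|_{p\to p}\leq C_p(1+|v_j|)^{\rho_j}$ interpolated against $\|A_j^{iv_j}\|_{2\to 2}\leq 1$) and then applying Theorem~\ref{thm:LA} with $n=0$, $l=d$; part (ii) is the same lifting of a one-variable multiplier to a $d$-variable one depending only on $\la_j$. Your accounting of how the factor $2$ from the interpolation cancels against $|1/p-1/2|\leq 1/2$ to give the uniform threshold $\rho+\mathbf{1}$ matches the paper's (terser) argument.
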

\begin{proof}
To prove item (i), note that having a Marcinkiewicz functional calculus of order $\rho_j$ implies satisfying \eqref{imaA} with every $\sigma_j>2\rho_j$. This observation follows from the bounds $\|A_j^{iv_j}\|_{p\to p}\leq C_p (1+|v_j|)^{\rho_j},$ $1<p<\infty,$ and $\|A_j^{iv_j}\|_{2\to 2}\leq 1,$ together with an interpolation argument. Now, Theorem \ref{thm:LA} (with $n=0$ and $l=d$) implies the desired conclusion.

The proof of item (ii) is even more straightforward, we just need to consider functions $m_j,$ $j=1,\ldots,d,$ which depend only on the variable $\la_j.$
\end{proof}
\begin{remark}
 The most typical instance of strongly commuting operators arises on product spaces, when each $A_j$ initially acts on some $L^2(X_j,\nu_j).$ Moreover, there are many results in the literature, see e.g.\ \cite{s1,s2,s4,s6,s7,vectvalMM,Thanherm}, which imply that a single operator has a Marcinkiewicz functional calculus. Consequently, using the corollary we obtain a joint Marcinkiewicz functional calculus for a vast class of systems of operators acting on separate variables. In particular, we may take $m(\la)=1-(\la_1+\cdots + \la_d)^{\delta}\chi_{\la_1+\cdots \la_d\leq 1},$ for $\delta>0$ large enough, thus obtaining the boundedness of the Bochner-Riesz means for the operator $A_1+\cdots + A_d.\footnote{More formally, we mean here $A_1\otimes I_{(1)}+\cdots + A_d\otimes I_{(d)},$ with the summands given by \eqref{eq:tensnot}}$ However, because of the assumed generality, these results are by no means optimal.
\end{remark}

To prove Theorem \ref{thm:LA} we need two auxiliary results which seem interesting on their own. First we need the $L^p$ boundedness of the square function
\begin{equation}
\label{squaref}
g_N(f)^2=\int_{(0,\infty)^{d}}\left|t^NL^Ne^{-\langle t ,L\rangle }f\right|^2\,\frac{dt}{t};
\end{equation}
recall that
\begin{align*}
t^NL^N&=(t_1L_1)^{N_1}\cdots (t_d L_d)^{N_d},\\
\langle t ,L\rangle&=t_1L_1+\cdots+t_dL_d.
\end{align*}
This will be proved as a consequence of a $d$-dimensional variant of \cite[Theorem 5.3]{AlFrMc} due to Albrecht, Franks and McIntosh.
\begin{thm}[cf.\ {\cite[Theorem 2.4]{ja}}]
\label{thm:gfun}
For each fixed $N\in\mathbb{N}^d$ the square function $g_N$ given by \eqref{squaref} preserves the $L^p$ norm, i.e.
\begin{equation*}
c_{p,N}\|f\|_p\leq \|g_N(f)\|_p\leq C_{p,N} \|f\|_p,\qquad 1<p<\infty.
\end{equation*}
\end{thm}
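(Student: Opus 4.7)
The plan is to prove the two inequalities separately: the upper bound by induction on $d$, reducing in each step to the one-variable Littlewood-Paley-Stein estimate with Hilbert-space-valued data; and the matching lower bound by duality from a Calder\'on-type reproducing identity. The base case $d=1$ is Stein's classical square function theorem for generators of symmetric contraction semigroups, $\|(\int_0^\infty |(tL)^N e^{-tL}h|^2\,dt/t)^{1/2}\|_p \leq C_{p,N}\|h\|_p$ for every $N\in\mathbb{N}$ and $1<p<\infty$. Rota's dilation theorem extends this verbatim to $H$-valued data $h\in L^p(X;H)$, $H$ any Hilbert space, with the same constant.

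For the inductive step, write $t=(t',t_d)$, $L'=(L_1,\ldots,L_{d-1})$ and $N'=(N_1,\ldots,N_{d-1})$. Strong commutativity yields
$$t^N L^N e^{-\langle t,L\rangle}f \;=\; (t_dL_d)^{N_d}e^{-t_dL_d}\bigl[(t')^{N'}(L')^{N'}e^{-\langle t',L'\rangle}f\bigr].$$
Setting $H':=L^2((0,\infty)^{d-1};\,dt'/t')$ and defining $F\colon X\to H'$ by $F(x)(t')=(t')^{N'}(L')^{N'}e^{-\langle t',L'\rangle}f(x)$, one has $g_N(f)(x)=\|(t_dL_d)^{N_d}e^{-t_dL_d}F(x)\|_{L^2_{t_d}(H')}$. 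The $H'$-valued base case applied to $L_d$ gives $\|g_N(f)\|_p\leq C_{p,N_d}\|F\|_{L^p(X;H')}$, and $\|F\|_{L^p(X;H')}$ is exactly the $(d-1)$-dimensional square function norm for the system $L'$ with multi-index $N'$, bounded by $\|f\|_p$ via the induction hypothesis.

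For the lower bound, \eqref{noatomatzero} together with the multivariate spectral theorem yields the reproducing identity
$$c_N f \;=\; \int_{\Rdp} t^{2N}L^{2N}e^{-2\langle t,L\rangle}f\,\frac{dt}{t},\qquad c_N:=\prod_{j=1}^d\frac{\Gamma(2N_j)}{2^{2N_j}},$$
on $L^2\cap L^p$. Pairing with $g\in L^2\cap L^{p'}$, splitting $e^{-2\langle t,L\rangle}$ and $L^{2N}$ into two equal factors and using self-adjointness rewrites $c_N\langle f,g\rangle$ as $\int_{\Rdp}\langle t^NL^Ne^{-\langle t,L\rangle}f,\,t^NL^Ne^{-\langle t,L\rangle}g\rangle\,dt/t$; Cauchy-Schwarz in $t$ followed by H\"older in $x$ bounds this by $\|g_N(f)\|_p\|g_N(g)\|_{p'}$. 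Inserting the already-proved upper bound for $g$ at exponent $p'$ and taking the supremum over $\|g\|_{p'}\leq 1$ gives $\|f\|_p\lesssim\|g_N(f)\|_p$.

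The main technical hurdle is the Hilbert-valued extension of Stein's one-variable estimate in the base case: this is precisely what allows the clean dimensional induction. It is by now standard via Rota's dilation theorem, but must be invoked carefully since \eqref{contra} is formulated only for scalar $f$. A minor secondary point is that \eqref{noatomatzero} is exactly what rules out a boundary contribution at $\la_j=0$ in the reproducing identity; without it one recovers only $(I-E(\{0\}))f$ on the left-hand side, which is however enough for applications since \eqref{noatomatzero} is in force throughout.
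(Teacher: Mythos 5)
Your proposal takes a genuinely different route from the paper. The paper proves the two-sided bound in one shot by invoking the $d$-operator version of the Albrecht--Franks--McIntosh square function theorem for sectorial operators with bounded $H^{\infty}$ calculus (\cite[Theorem 5.3]{AlFrMc}); the only work left is to check that each $L_j$ is one-one, of type $\omega<\pi/2$, with $\Dom L_j$ and $\Ran L_j$ dense in $L^p$, the $H^{\infty}$ calculus itself coming from \eqref{contra} via Cowling's theorem \cite{Hanonsemi}. Your dimensional induction for the upper bound and the Calder\'on-reproducing-formula duality for the lower bound are a more hands-on alternative. The duality step is correct as written: the identity $\int_{\Rdp}t^{2N}\la^{2N}e^{-2\langle t,\la\rangle}\,\frac{dt}{t}=\prod_{j}\Gamma(2N_j)2^{-2N_j}$ holds for every $\la\in\Rdp$, \eqref{noatomatzero} guarantees the spectral measure puts no mass on the boundary, and the polarization/Cauchy--Schwarz/H\"older chain is standard. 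The reduction of the $d$-parameter square function to a Hilbert-valued one-parameter one is also the natural way to set up the induction.

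The soft spot --- and as written I would call it a genuine gap --- is the base case. Rota's dilation theorem applies to Markovian operators: positivity-preserving contractions with $T1=1$. Condition \eqref{contra} gives neither positivity nor conservativity, and symmetric contraction semigroups that fail to preserve positivity (e.g.\ magnetic Schr\"odinger semigroups) fall squarely within the paper's hypotheses; so ``Rota's dilation theorem extends this verbatim'' is not available, and the obstruction is not the one you flag (scalar versus vector-valued $f$) but the positivity hypothesis of the dilation theorem itself. The same caveat already affects your scalar base case, since Stein's theorem in \cite{topics} is proved for symmetric \emph{diffusion} (Markovian) semigroups. The repair under \eqref{contra} alone is to note that any $L^p$-bounded operator extends to $L^p(X;H)$ with the same norm when $H$ is a Hilbert space (Marcinkiewicz--Zygmund), so that $L_d\otimes I_{H'}$ inherits a bounded $H^{\infty}$ calculus on $L^p(X;H')$ from Cowling's theorem, and the two-sided square function estimate for it then follows from the Cowling--Doust--McIntosh--Yagi/Le Merdy theory --- that is, precisely the $d=1$ instance of the machinery the paper cites. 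With that substitution your induction closes; without it, the base case is unproved in the stated generality.
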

\begin{proof}[Proof (sketch)]
Even though \cite[Theorem 5.3]{AlFrMc} is given only for $d=2$ it readily generalizes to systems of $d$ operators, with the same assumptions. Hence, we just need to check that these assumptions are satisfied.

Setting $h_j(z)=z^{N_j}e^{-z},$ $z\in\mathbb{C},$ we clearly see that $h_j\in H^{\infty}(S_{\mu})$ for every $\mu < \pi/2,$ and
$$|h_j(z)|\leq C_\mu \frac{|z|}{1+|z|^2},\qquad z\in S_{\mu} $$
In the terminology of \cite{AlFrMc} this means that $h_j \in \Psi(S_{\mu}),$ for every $\mu<\pi/2.$
Observe also that our square function is of the form
$$g_N(f)^2=\int_{(0,\infty)^{d}}\left|h_1(t_1L_1)\cdots h_d(t_d L_d)f\right|^2\,\frac{dt}{t}.$$

Fix $j=1,\ldots,d,$ and denote $T=L_j.$ By referring to the $d$-dimensional version of \cite[Theorem 5.3]{AlFrMc} we are left with verifying that: $T$ is of a type $\omega<\pi/2$ (see \cite[p.\ 293]{AlFrMc} for a definition), $T$ is one-one, and both $\Dom T$ and $\Ran T$ are dense in the Banach space $B:=L^p(X,\nu).$ The reader is kindly referred to consult the proof of \cite[Proposition 3.2]{jaRieszGen}, where a justification of these statements is contained

A more detailed and slightly different proof of the proposition can be given along the lines of the proof of \cite[Corollary 4.1.2]{PhD}.
\end{proof}

For fixed $N\in\mathbb{N}^d$ and a parameter $t=(t_1,\ldots,t_d)\in(0,\infty)^{d}$ we set
\begin{equation*}
m_{N,t}(\la)=\prod_{j=1}^d(t_j\lambda_j)^{N_j} \exp\bigg(-\sum_{j=1}^d t_j \la_j\bigg)\,m(\la).
\end{equation*}
Recall that the Mellin transform $\M$ is given by \eqref{eq:Mellin}, while $L^{iu}=L^{iu_1}\cdots L^{iu_d},$ with $L_{n+j}=A_j$ and $u_{n+j}=v_j,$ for $j=1,\ldots,l.$ Theorem \ref{thm:LA} will be deduced from the following.
\begin{thm}[Cf.\ {\cite[Theorem 1]{Meda1} and \cite[Theorem 2.2]{ja}}]
\label{thm:gen}
Let $L=(L_1,\ldots,L_d),$ be a general system of non-negative self-adjoint operators satisfying \eqref{contra} and \eqref{noatomatzero} and let $1<p<\infty$ be fixed. If $m\colon (0,\infty)^{d}\to \mathbb{C}$ is a bounded function such that, for some $N\in\mathbb{N}^d,$
\begin{equation*}
m(L,N,p):=\int_{\mathbb{R}^{d}}\sup_{t\in(0,\infty)^{d}}|\M(m_{N,t})(u)|\,\|L^{iu}\|_{\pp}\,du<\infty,
\end{equation*}
then the multiplier operator $m(L)$ is bounded on $L^p(X,\nu)$ and
$$\|m(L)\|_{p\to p}\leq C_{p,d,N}m(L,N,p).$$
\end{thm}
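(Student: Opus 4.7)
The plan is to combine three ingredients, following the scheme of Meda \cite[Thm.\ 1]{Meda1} and its multivariate adaptation in \cite[Thm.\ 2.2]{ja}: a Calder\'on-type reproducing formula, Mellin inversion, and the $H^{\infty}$ joint functional calculus afforded automatically by \eqref{contra}.

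First I set $\phi_t(\la)=\prod_{j=1}^d(t_j\la_j)^{N_j}e^{-t_j\la_j}$ and observe the scalar identity $\int_{\Rdp}\phi_t(\la)^2\,dt/t=\prod_j 2^{-2N_j}\Gamma(2N_j)=:c_N$, valid for every $\la\in\Rdp$. The multivariate spectral theorem together with \eqref{noatomatzero} then yields the operator identity $c_N\,m(L)=\int_{\Rdp}\phi_t(L)\,m_{N,t}(L)\,dt/t$ on a dense subspace of $L^2$. For each fixed $t$ one has $m_{N,t}\in L^1(\Rdp,d\la/\la)$ thanks to the decay of $\phi_t$, so Mellin inversion applies: $m_{N,t}(L)f=(2\pi)^{-d}\int_{\mathbb{R}^d}\M(m_{N,t})(u)\,L^{iu}f\,du$. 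Inserting this into the reproducing formula and exchanging the order of integration by Fubini produces the representation
$c_N\,m(L)f=(2\pi)^{-d}\int_{\mathbb{R}^d}L^{iu}\,T_uf\,du,\qquad T_uf:=\int_{\Rdp}\M(m_{N,t})(u)\,\phi_t(L)\,f\,\tfrac{dt}{t}.$

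The main obstacle is to show $\|T_u\|_{\pp}\lesssim\sup_t|\M(m_{N,t})(u)|$ with a constant uniform in $u$. Spectrally $T_u=\tilde w_u(L)$ with $\tilde w_u(\la)=\int_{\Rdp}\M(m_{N,t})(u)\,\phi_t(\la)\,dt/t$. The dilation change of variables $s_j=t_j\la_j$ converts this into $\tilde w_u(\la)=\int_{\Rdp}\M(m_{N,s/\la})(u)\,s^N e^{-s}\,\tfrac{ds}{s}$, which yields the pointwise bound $|\tilde w_u(\la)|\le\prod_j\Gamma(N_j)\cdot\sup_t|\M(m_{N,t})(u)|$ uniformly for $\la\in\Rdp$. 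The same estimate extends to $\la$ in any poly-sector ${\bf S}_\varphi$ with $\varphi<\pi/2$: on ${\bf S}_\varphi$ one has $\int_{\Rdp}|\phi_t(\la)|\,dt/t\le \prod_j(\cos\varphi)^{-N_j}\Gamma(N_j)$, so $\tilde w_u\in H^{\infty}({\bf S}_\varphi)$ with $H^{\infty}$-norm controlled by $\sup_t|\M(m_{N,t})(u)|$. Invoking the joint $H^{\infty}$ calculus for $L$, which is automatic from \eqref{contra} by Cowling \cite{Hanonsemi} and Carbonaro--Dragi\v cevi\'c \cite{Carb-Drag} combined with Albrecht--Franks--McIntosh \cite{AlFrMc}, then delivers $\|T_u\|_{\pp}\le C_p\sup_t|\M(m_{N,t})(u)|$ uniformly in $u$.

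Putting everything together, Minkowski's inequality in $u$ gives $\|m(L)f\|_p\le c_N^{-1}(2\pi)^{-d}\int_{\mathbb{R}^d}\|L^{iu}\|_{\pp}\,\|T_uf\|_p\,du\le C_{p,d,N}\,m(L,N,p)\,\|f\|_p$, as claimed. The hard part is the $L^p$-bound on $T_u$: the $\sup_t$-form of the hypothesis only permits a scalar pointwise bound on $\tilde w_u$, and it is precisely the holomorphy of $\tilde w_u$ in a poly-sector, transferred through the joint $H^{\infty}$ calculus of $L$, that converts this scalar estimate into the required uniform operator norm bound. A naive square-function route based on Theorem~\ref{thm:gfun} would replace $\sup_t$ by the stronger $L^2(dt/t)$-norm of $\M(m_{N,\cdot})(u)$, and thus would not suffice for the present statement.
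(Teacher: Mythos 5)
Your argument is sound, but it takes a genuinely different route from the paper's, and your closing claim about the square-function approach is mistaken. The paper's proof \emph{is} the ``square-function route based on Theorem~\ref{thm:gfun}'': it bounds $\|m(L)f\|_p\lesssim\|g_{N+{\bf 1}}(m(L)f)\|_p$, inserts the Mellin representation
\begin{equation*}
t^{N+{\bf 1}}L^{N+{\bf 1}}e^{-\langle t,L\rangle}m(L)f=\frac{1}{(2\pi)^d}\int_{\mathbb{R}^d}\M(m_{N,t})(u)\,t^{\bf 1}L^{\bf 1}e^{-\frac12\langle t,L\rangle}(L^{iu}f)\,du,
\end{equation*}
and applies Minkowski's integral inequality \emph{in $u$ first}; the factor $\sup_{t}|\M(m_{N,t})(u)|$ then comes out of the $L^2(dt/t)$-norm, and what remains is exactly $2^d g_{\bf 1}(L^{iu}f)$, which the upper bound of Theorem~\ref{thm:gfun} controls by $\|L^{iu}\|_{p\to p}\|f\|_p$. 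So the $L^2(dt/t)$-norm never lands on $\M(m_{N,\cdot})(u)$, and the $\sup_t$ hypothesis suffices for that route. Your alternative --- the Calder\'on reproducing formula $c_N\,m(L)=\int_{\Rdp}\phi_t(L)m_{N,t}(L)\,dt/t$ together with the dilation argument showing $\tilde w_u\in H^\infty({\bf S}_\varphi)$ with norm $\lesssim_N\sup_t|\M(m_{N,t})(u)|$ --- is correct and rather elegant, but it consumes a heavier black box: a joint $H^\infty$ calculus for the system in a poly-sector. This is available non-circularly from \cite[Theorem 5.4]{AlFrMc} combined with \cite{Carb-Drag} (cf.\ \cite[Proposition 3.2]{jaRieszGen}), but you must say so explicitly, since the paper's own joint calculus (Corollary~\ref{corHinf}) is \emph{derived from} the present theorem and cannot be used here. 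Two further small points: $T_u$ must be defined spectrally as $\tilde w_u(L)$ (as you do), since $\int_{\Rdp}\|\phi_t(L)g\|_2\,dt/t$ need not converge for general $g\in L^2$, so the operator-valued $t$-integral is not absolutely convergent; and the $t$--$u$ interchange plus the passage from an $L^2$ identity to an $L^p$ bound require the same truncation/density justifications that the paper defers to \cite[p.~642]{Meda1}. On balance the paper's route is the lighter one --- it needs only the two-sided square-function estimate, which is precisely the ingredient underlying the Albrecht--Franks--McIntosh joint calculus you invoke.
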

\begin{proof}
The proof follows the scheme developed in the proof of \cite[Theorem 1]{Meda1} and continued in the proof of \cite[Theorem 2.2]{ja}, however, for the convenience of the reader we provide details.

All the needed quantities are defined on $L^2 \cap L^p$ by the multivariate spectral theorem. From the inversion formula for the Mellin transform and the multivariate spectral theorem we see that
\begin{equation}
\label{eq:auxeq0}
t^{N}L^N\exp(-2^{-1}\langle t, L\rangle)m(L)f=\frac{1}{(2\pi)^d}\int_{\mathbb{R}^d}\M(m_{N,t})(u)L^{iu}f\,du.
\end{equation}

Consequently, since $t^{1} L^{1}\exp(-2^{-1}\langle t, L\rangle)$ is bounded on $L^2,$ we have
\begin{equation}
\label{eq:auxeq}
t^{N+{ 1}}L^{N+{ 1}}\exp(-\langle t, L\rangle)m(L)f=\frac{1}{(2\pi)^d}\int_{\mathbb{R}^d}\M(m_{N,t})(u)t^{ 1}L^{ 1}\exp\big(-\frac12\langle t, L\rangle\big)(L^{iu}f)\,du.
\end{equation}
Note that, for each fixed $t\in\Rdp,$ both the integrals in \eqref{eq:auxeq0} and \eqref{eq:auxeq} can be considered as Bochner integrals of (continuous) functions taking values in $L^2.$

Then, at least formally, from Theorem \ref{thm:gfun} followed by \eqref{eq:auxeq}, we obtain
\begin{align*}
&(C_{p,d,N+{ 1}})^{-1}\|m(L)f\|_p\leq \|g_{N+{ 1}}(m(L)(f))\|_p\\
&=\bigg\|\bigg(\int_{\Rdp}\bigg|\frac{1}{(2\pi)^d}\int_{\mathbb{R}^d}\M(m_{N,t})(u)tL\exp(-2^{-1}\langle t, L\rangle)(L^{iu}f)\,du\bigg|^2\,\frac{dt}{t}\bigg)^{1/2}\bigg\|_p.
\end{align*}
Hence, using Minkowski's integral inequality, it follows that $\|m(L)f\|_p$ is bounded by
\begin{equation*} (2\pi)^{-d}\,C_{p,d,N+{ 1}}\int_{\mathbb{R}^d}\sup_{\TT\in\Rdp}|\M(m_{N,\TT})(u)| \bigg\|\bigg(\int_{\Rdp}\bigg|tL\exp(-2^{-1}\langle t, L\rangle)(L^{iu}f)\bigg|^2\,\frac{dt}{t}\bigg)^{1/2}\bigg\|_p\,du.
\end{equation*}
Now, observing that
$$\left(\int_{\Rdp}\bigg|tL\exp(-2^{-1}\langle t, L\rangle)(L^{iu}f)\bigg|^2\,\frac{dt}{t}\right)^{1/2}=2^{d}g_{{ 1}}(L^{iu}f)$$
and using once again Theorem \ref{thm:gfun} (this time with $N={1}$), we arrive at
\begin{align*}
\|m(L)f\|_p&\leq \pi^{-d}\,C_{p,d,N+{1}}\int_{\mathbb{R}^d}\|g_{{ 1}}(L^{iu}f)\|_p\,\sup_{\TT\in \Rdp}|\M(m_{N,\TT})(u)|\,du \\ 
&\leq \pi^{-d}\,C_{p,d,N+{ 1}}C_{p,d,{ 1}}\int_{\mathbb{R}^d}\|L^{iu}\|_{p\to p}\,\sup_{\TT\in\Rdp}|\M(m_{N,\TT})(u)|\,du\,\|f\|_p.
\end{align*}
Thus, the proof of Theorem \ref{thm:gen} is finished, provided we justify the formal steps above. This however can be done almost exactly as in \cite[p. 642]{Meda1}. We omit the details here and kindly refer the interested reader to \cite[p.\ 24]{PhD}.
\end{proof}
\begin{remark}
The proof of Theorem \ref{thm:gen} we present here is modeled over the original proof of \cite[Theorem 1]{Meda1} for the one-operator case. In \cite[Theorem 2.1]{Hanonultracon} the authors gave a simpler proof of \cite[Theorem 1]{Meda1}. However, a closer look at their method reveals that it does not carry over to our multivariate setting. The reason is that we initially do not know whether multivariate multipliers of Laplace transform type $\Rdp\ni\la\mapsto \la_1\cdots \la_d \int_{\Rdp}\exp(-t_1\la_1+\cdots t_d \la_d)\, \kappa(t)\,dt,$
with $\kappa$ being a bounded function on $\Rdp$ that may not have a product form,  produce bounded multiplier operators on $L^p.$
\end{remark}
Having proved Theorem \ref{thm:gen} we proceed to the proof of our main result.
\begin{proof}[Proof of Theorem \ref{thm:LA}]
The proof is based on applying Theorem \ref{thm:gen} to the system $(L_1,\ldots,L_d)$ with $L_{n+j}=A_j,$ $j=1,\ldots,l.$ Note that here the distinction between the operators $L_j,$ $j=1,\ldots,n,$ and $A_j,$ $j=1,\ldots,l,$ is relevant. The assumptions \eqref{imaL} and \eqref{imaA} imply that it is enough to verify the bound
\begin{equation}
\label{eq:decayIm}
\begin{split}
&\sup_{t\in(0,\infty)^{n+l}}|\M(m_{N,t})(u,v)|\\ 
&\lesssim \prod_{j=1}^n (1+|u_j|)^{-\rho_j}\exp(-\phi_p^j|u_j|)\prod_{j=1}^l (1+|v_{j}|)^{-\rho_{n+j}}\ \max_{\varepsilon \in\{-1,1\}^d}\|m(e^{i\varepsilon\phi_p}\cdot,\cdot)\|_{Mar,\rho},
\end{split}
\end{equation}
uniformly in $(u,v)\in \mathbb{R}^n\times \mathbb{R}^l.$ The Mellin transform in \eqref{eq:decayIm} is
$$\M(m)(u,v)=\int_{\Rnp}\int_{\Rlp}m(\la,a)\la^{-iu}a^{-iv}\frac{d\la}{\la}\frac{da}{a},$$
where $a^{-iv}=a_1^{-iv_1}\cdots a_l^{-iv_l}$ while $\frac{da}{a}=\frac{da_1}{a_1}\cdots \frac{da_l}{a_l}.$
Throughout the proof we will sometimes use $\la=(\la_1,\ldots,\la_n,\la_{n+1},\ldots,\la_d)$ and $u=(u_1,\ldots,u_n,u_{n+1},\ldots,u_d)$ to denote the variables $(\la_1,\ldots,\la_n,a)$ and $(u_1,\ldots,u_n,v).$ In such instances we understand that $\la_{n+j}=a_j$ and $u_{n+j}=v_j,$ $j=1,\ldots,l.$

The proof of \eqref{eq:decayIm} is an appropriately adjusted combination of the proofs of \cite[Theorem 4.2]{jaOU} and \cite[Theorem 4.1]{ja}, based on the usage of Theorem \ref{thm:gen}. The main idea is to change the path of integration in the first $n$ variables under the integral in \eqref{eq:decayIm}. This approach originates in \cite[Theorem 2.2]{funccalOu}. The proof we present here is a multivariate generalization of both the proofs of \cite[Theorem 2.2]{funccalOu} and \cite[Theorem 4]{Meda1}. For the sake of completeness we give details.

Defining $\Rde=\{x\in\mathbb{R}^d\colon \varepsilon_j x_j\geq 0,\, j=1,\ldots,d\},$ with $\varepsilon\in \{-1,1\}^n,$ we note that it suffices to obtain \eqref{eq:decayIm} separately on each $\Rde\times \mathbb{R}^l.$ Thus, till the end of the proof we fix $\varepsilon\in \{-1,1\}^n$ and take $u \in \Rdme.$ By our assumptions, for each fixed $a\in \Rlp,$ $N\in \mathbb{N}^d,$ $t\in \Rnp$ and $u\in\mathbb{R}^n,$  the function \begin{align*} m_{N,t}(z,a)z^{-iu-{\bf 1}}=&t^{N}z^{(N_1,\ldots,N_n)-iu-{\bf 1}}a^{(N_{n+1},\ldots,N_d)-iv-{\bf 1}}m(z,a)\\
& \times \exp(-2^{-1}\langle z, (t_1,\ldots,t_n)\rangle-2^{-1}\langle a, (t_{n+1},\ldots,t_d)\rangle)\end{align*} is bounded and holomorphic on $${\bf S}_{\phi_p}=\{z\in \mathbb{C}^n\colon |\Arg(z_j)|\leq \phi_p^j,\qquad j=1,\ldots,n\}.$$ Moreover, $m_{N,t}(z,a)z^{-iu-{1}}$ is rapidly (exponentially) decreasing when $\Real(z_j)\to \infty,$ $j=1,\ldots,n.$ Thus, for each $\varepsilon\in\{-1,1\}^n,$ we can use (multivariate) Cauchy's integral formula to change the path of integration in the first $n$ variables of the integral defining $\M(m_{N,t})(u,v)$ to the poly-ray $\{(e^{i\varepsilon_1\phi_p^1}\la_1,\ldots,e^{i\varepsilon_n\phi_p^n}\la_n)\colon \la \in \Rnp\}.$ Then, denoting
$\tilde{m}:=m_{\varepsilon}^{\phi_p}$ and $\varepsilon\phi_p=(\varepsilon_1\phi_p^1,\ldots,\varepsilon_n\phi_p^n),$
 we  obtain
\begin{equation}
\label{eq:Mform}
\begin{split}
&e^{-\langle u,(\varepsilon \phi_p)\rangle}e^{-i\langle N,(\varepsilon \phi_p)\rangle}\M(m_{N,t})(u,v)\\
&=\int_{\Rnp}\int_{\Rlp}t^N(\la,a)^N\exp\big(-\frac12\langle (e^{i\varepsilon_1\phi_p^1}t_1,\ldots, e^{i\varepsilon_d\phi_p^n}t_n,t_{n+1}\ldots,t_d),(\la,a)\rangle\big)\\
&\times \tilde{m}(\la,a)\la^{-iu}a^{-iv}\,\frac{d\la}{\la}\frac{da}{a}:=\int_{\Rdp}\tilde{m}_{N,t}(\la)\la^{-iu}\,\frac{d\la}{\la}=\M(\tilde{m}_{N,t})(u).
\end{split}
\end{equation}
In te second to the last equality above it is understood that $u\in \mathbb{R}^d$ and $\la \in \Rdp$ with $\la_{n+j}=a_j,$  $u_{n+j}=v_j,$ for $j=1,\ldots,l;$ while $\frac{d\la}{\la}$ denotes the Haar measure on $(\Rdp,\cdot).$

We claim that, for $u\in \mathbb{R}^d,$
\begin{equation}
\label{sec:LA,eq:claimGen}
\sup_{t\in\Rdp}\left|\int_{\Rdp}\tilde{m}_{N,t}(\la)\la^{-iu}\,\frac{d\la}{\la}\right|\leq C_{N,\rho} \prod_{j=1}^n (1+|u_j|)^{-\rho_j|1/p-1/2|} \max_{\varepsilon \in \{-1,1\}^n}\|m(e^{i\varepsilon\phi_p}\cdot,\cdot)\|_{Mar,\rho}.
\end{equation}
Once the claim is proved, coming back to \eqref{eq:Mform} we obtain \eqref{eq:decayIm} for $u\in\Rdme$ and $v\in\mathbb{R}^l,$ hence, finishing the proof of Theorem \ref{thm:LA}.

Thus, till the end of the proof we focus on justifying \eqref{sec:LA,eq:claimGen}. Let $N\in \mathbb{N}^d,$ $N>\rho,$ and $\psi$ be a nonnegative, $C^{\infty}$ function supported in $[1/2,2]$ and such that $$\sum_{k=-\infty}^{\infty}\psi(2^k v )=1,\qquad v>0.$$ Then, for $\Psi_{k}(\la)=\psi(2^{k_1}\la_1)\cdots \psi(2^{k_d}\la_d),$ $$\sum_{k\in\mathbb{Z}^d}\Psi_{k}(\la)=1, \qquad \la\in\mathbb{R}^d_+.$$

Set $$c_{N_j,\rho_j,u_j}=\frac{(-1)^{\rho_j}}{(N_j-iu_j)\cdots (N_j-iu_j+\rho_j-1)}\qquad \textrm{and}\qquad c_{N,\rho,u}=\prod_{j=1}^d c_{N_j,\rho_j,u_j}.$$ Changing variables $t_j \la_j\to \la_j$ and integrating by parts $\rho_j$ times in the $j$-th variable, $j=1,\ldots,d,$ we see that
\begin{align*}
\M(\tilde{m}_{N,t})(u)=c_{N,\rho,u}t^{iu}\sum_{k\in \mathbb{Z}^d}\int_{\Rdp}\la^{N+\rho-iu}\partial^{\rho}\bigg(e^{-2^{-1}\langle w,\la\rangle }\tilde{m}(\la_1/t_1,\ldots,\la_d/t_d)\Psi_k(\la)\bigg)\,\frac{d\la}{\la},
\end{align*}
where $w\in \mathbb{C}^n\times \Rlp$ is the vector $w=(e^{i\varepsilon_1\phi_p^1},\cdots,e^{i\varepsilon_n\phi_p^n},1,\ldots,1).$
For further reference note that $\Real(w_j)>0,$ for each $j=1,\ldots,d.$

Leibniz's rule allows us to express the derivative $\partial^{\rho}$ as a weighted sum of derivatives of the form
\begin{align*}E^{k}_{\gamma,\delta,t}(\la)&= e^{-2^{-1}\langle w,\la\rangle}t^{-\gamma}(\partial^{\gamma}\tilde{m})(\la_1/t_1,\ldots,\la_d/t_d)2^{\langle k,\delta\rangle}
\prod_{j=1}^d\bigg(\frac{d^{\delta_j}}{d\la_j^{\delta_j}}\psi\bigg)(2^{k_j}\la_j),\end{align*}
where $\gamma=(\gamma_1,\ldots,\gamma_d)$ and $\delta=(\delta_1,\ldots,\delta_d)$ are multi-indices such that $\gamma+\delta\leq\rho.$ Proceeding further as in the proof of \cite[Theorem 4]{Meda1}, we denote
$$I_{k,N,\gamma,\delta}(t,u)\equiv\int_{\Rdp}\la^{N+\rho-iu}E^{k}_{\gamma,\delta,t}(\la)\,\frac{d\la}{\la}.$$
Set $p_k=p_{k_1}\cdots p_{k_d}$ with $p_{k_j},$ $j=1,\ldots,d,$ given by
$$p_{k_j}=\begin{cases}
 2^{-k_j\rho_j}, &\mbox{if $k_j>0$}, \\
  2^{-k_j(N_j+\rho_j)}\exp(-2^{-k_j-2}\Real{w_j}), &\mbox{if $k_j\leq0,$}
       \end{cases} $$
sot that $\sum_{k\in\mathbb{Z}^d}p_k<\infty.$

Observe that it is enough to verify the bound
\begin{equation}
\label{sec:LA,eq:post1}
|I_{k,N,\gamma,\delta}(t,u)|\leq C_{N,\gamma,\delta}\,\|\tilde{m}\|_{Mar,\rho}\, p_{k}, \qquad k\in \mathbb{Z}^d,
\end{equation}
uniformly in $t\in\Rdp$ and $u\in\mathbb{R}^d.$ Indeed, assuming \eqref{sec:LA,eq:post1} we obtain
\begin{align*}
\nonumber
\sup_{t\in\Rdp}\left|\int_{\Rdp}\tilde{m}_{N,t}(\la)\la^{-iu}\,\frac{d\la}{\la}\right|&\leq C_N\prod_{j=1}^{d}(1+|u_j|)^{-\rho_j}\sum_{\gamma+\delta\leq \rho}C_{\gamma,\delta,\rho}\,\sum_{k\in \mathbb{Z}^d}\sup_{t\in\Rdp}|I_{k,N,\gamma,\delta}(t,u)|\\
&\leq C_{N,\rho}\|\tilde{m}\|_{Mar,\rho} \prod_{j=1}^{d}(1+|u_j|)^{-\rho_j},
\end{align*}
and \eqref{sec:LA,eq:claimGen} follows. Thus, it remains to show \eqref{sec:LA,eq:post1}.

From the change of variable $2^{k_j}\la_j\to \la_j$ we have
\begin{align*}
&|I_{k,N,\gamma,\delta}(t)|=2^{-\langle k,N+\rho-\gamma-\delta\rangle }\left|\int_{[1/2,2]^{d}}\la^{N+\rho-\gamma-iu}\exp(-2^{-1}\langle {\bf 2}^{-k}w,\la\rangle)\right. \\
&\times\left.\left(\frac{\la_1}{2^{k_1}t_1},\ldots,\frac{\la_d}{2^{k_d}t_d}\right)^{\gamma}\partial^{\gamma}(\tilde{m})\left(\frac{\la_1}{2^{k_1}t_1},\ldots,\frac{\la_d}{2^{k_d}t_d}\right)\partial^{\delta}(\Psi)(\la)\,\frac{d\la}{\la}\right|.
\end{align*}
Thus, applying Schwarz's inequality we obtain
\begin{equation}
\label{sec:LA,eq:estI}
\begin{split}
&|I_{k,N,\gamma,\delta}(t)|\leq C_{\Psi} 2^{-\langle k,N+\rho-\gamma-\delta\rangle} \left(\int_{[1/2,2]^d}\left|\la^{N+\rho-\gamma}\exp(-2^{-1}\langle {\bf 2}^{-k}\Real(w),\la\rangle)\right|^2\,\frac{d\la}{\la}\right)^{1/2}\\
&\times \left(\int_{[1/2,2]^d}\left|\left(\frac{\la_1}{2^{k_1}t_1},\ldots,\frac{\la_d}{2^{k_d}t_d}\right)^{\gamma}\partial^{\gamma}(m)\left(\frac{\la_1}{2^{k_1}t_1},\ldots,\frac{\la_d}{2^{k_d}t_d}\right)\right|^2\,\frac{d\la}{\la} \right)^{1/2}.
\end{split}
\end{equation}
Moreover, since $\Real(w_j)>0,$ for $j=1,\ldots,d,$ it is not hard to see that
\begin{equation}
\label{sec:LA,eq:estint}
\begin{split}
&\left(\int_{[1/2,2]^d}\left|\la^{N+\rho-\gamma}\exp(-2^{-1}\langle{\bf 2}^{-k}\Real(w),\la\rangle)\right|^2\,\frac{d\la}{\la}\right)^{1/2}\\ 
&=\left(\prod_{j=1}^{d}\int_{[1/2,2]}\left|\la_j^{N_j+\rho_j-\gamma_j}\exp(-2^{-k_j-1}\Real(w_j)\la_j)\right|^2\,\frac{d\la_j}{\la_j}\right)^{1/2}\\
&\leq C_{N,\rho,\gamma}\prod_{j=1}^{d} \begin{cases}
 1, &\mbox{if $k_j>0$}, \\
  \exp(-2^{-k_j-2}\Real(w_j)), &\mbox{if $k_j\leq0.$}
       \end{cases}
\end{split}
\end{equation}

Now, coming back to \eqref{sec:LA,eq:estI}, we use the assumption that $\tilde{m}$ satisfies the Marcinkiewicz condition of order $\rho$ together with \eqref{sec:LA,eq:estint} (recall that $\gamma+\delta\leq \rho< N$) to obtain \eqref{sec:LA,eq:post1}. The proof of Theorem \ref{thm:LA} is thus finished.
\end{proof}

\section{Weak type results for the system $(\ld,A)$}
 \label{sec:OA}
Here we consider the pair of operators $(\mL\otimes I,I\otimes A),$ where $\mL$ is the $d$-dimensional Ornstein-Uhlenbeck (OU) operator, while $A$ is an operator having certain Gaussian bounds on its heat kernel (which implies that $A$ has a Marcinkiewicz functional calculus). We also assume that $A$ acts on a space of homogeneous type $(Y,\zeta,\mu).$ The main theorem of this section is Theorem \ref{thm:OA}. It states that Laplace transform type multipliers of $(\mL\otimes I,I\otimes A)$ are bounded from the $H^1(Y,\mu)$-valued $L^1(\mathbb{R}^d,\gamma)$ to $L^{1,\infty}(\gamma \otimes \mu).$ Here $H^1(Y,\mu)$ is the atomic Hardy space in the sense of Coifman and Weiss \cite{CW}, while $\gamma$ is the Gaussian measure on $\mathbb{R}^d$ given by $d\gamma(x)=\pi^{-d/2}e^{-|x|^2}dx.$ Additionally, in the appendix we show that the considered weak type $(1,1)$ property interpolates well with the boundedness on $L^2,$ see Theorem \ref{thm:interH1}.

In what follows we denote by $\mL$ the $d$-dimensional Ornstein-Uhlenbeck operator
$$
-\frac{1}{2}\Delta+\langle x,\nabla\rangle.
$$
It is easily verifiable that $\mL$ is symmetric on $C_c^{\infty}(\mathbb{R}^d)$ with respect to the inner product on $L^2(\mathbb{R}^d,\gamma).$ The operator $\mL$ is also essentially self-adjoint on $C_c^{\infty}(\mathbb{R}^d),$ and we continue writing $\mL$ for its unique self-adjoint extension.

It is well known that $\mL$ can be expressed in terms of Hermite polynomials by
$$\mL f=\sum_{k\in\mathbb{N}^d_0} |k| \langle f,\bnH_k\rangle_{L^2(\mathbb{R}^d,\gamma)}\bnH_k=\sum_{j=0}^{\infty}jP_jf,$$
on the natural domain
$$\Dom(\mL)=\{f\in L^2(\mathbb{R}^d,\gamma)\colon \sum_{k\in\mathbb{N}^d_0}|k|^2 \langle f,\bnH_k\rangle_{L^2(\mathbb{R}^d,\gamma)}<\infty\}.$$
Here $|k|=k_1+\cdots+k_d$ is the length of a multi-index $k\in \mathbb{N}^d_0,$ $\bnH_k$ denotes the $L^2(\mathbb{R}^d,\gamma)$ normalized $d$-dimensional Hermite polynomial of order $k,$ while
$$P_jf=\sum_{|k|=j}\langle f,\bnH_k\rangle_{L^2(\mathbb{R}^d,\gamma)} \bnH_k,\qquad j\in\mathbb{N}_0,$$ is the projection onto the eigenspace of $\mL$ with eigenvalue $j.$

For a bounded function $m\colon\mathbb{N}_0\to \mathbb{C},$ the spectral multipliers $m(\mL)$ are defined by \eqref{m(L)def} with $d=1$. In the case of the Ornstein-Uhlenbeck operator they are given by
\begin{equation*}
m(\mL)f=\sum_{k\in\mathbb{N}^d_0} m(k_1+\cdots + k_d) \langle f,\bnH_k\rangle_{L^2(\mathbb{R}^d,\gamma)}\bnH_k=\sum_{j=0}^{\infty}m(j)P_jf.
\end{equation*}

Let $m$ be a function, which is bounded on $[0,\infty)$ and continuous on $\mathbb{R}_+.$ We say that $m$ is an $L^p(\mathbb{R}^d,\gamma)$-uniform multiplier of $\mL,$ whenever
$$\sup_{t>0}\|m(t \mL)\|_{L^p(\mathbb{R}^d,\gamma)\to L^p(\mathbb{R}^d,\gamma)}<\infty.$$
Observe that by the spectral theorem the above bound clearly holds for $p=2.$ Using \cite[Theorem 3.5 (i)]{hmm} it follows that, if $m$ is an $L^p(\mathbb{R}^d,\gamma)$-uniform multiplier of $\mL$ for some $1<p<\infty,$ $p\neq 2,$ then $m$ necessarily extends to a holomorphic function in the sector $S_{\phi_p^{*}}$ (recall that $\pst=\arcsin|2/p-1|$). Assume now that $m(t\mL)$ is of weak type $(1,1)$ with respect to $\gamma,$ with a weak type constant which is uniform in $t>0.$ Then, since the sector $S_{\phi_p^{*}}$ approaches the right half-plane $S_{\pi/2}$ when $p\to 1^+,$ using the Marcinkiewicz interpolation theorem we see that the function $m$ is holomorphic (but not necessarily bounded) in $S_{\pi/2}$. An example of such an $m$ is a function of Laplace transform type in the sense of Stein \cite[pp. 58, 121]{topics}, i.e.\ $m(z)=z\int_0^{\infty}e^{-zt}\kappa(t)\,dt,$ with $\kappa\in L^{\infty}(\mathbb{R}_+,dt).$\footnote{Taking $\kappa(t)=e^{-it},$ so that $m(z)=z/(z+i),$ we see that these multipliers may be unbounded on $S_{\pi/2}.$}

Let now $A$ be a non-negative, self-adjoint operator defined on a space $L^2(Y,\mu),$ where $Y$ is equipped with a metric $\zeta$ such that $(Y,\zeta,\mu)$ is a space of homogeneous type, i.e.\ $\mu$ is a doubling measure. For simplicity we assume that $\mu(Y)=\infty,$ and that for all $x_2\in Y,$ the function $(0,\infty)\ni R\mapsto \mu(B_{\zeta}(x_2,R))$ is continuous and $\lim_{R\to 0}\mu(B_{\zeta}(x_2,R))=0.$ We further impose on $A$ the assumptions  \eqref{contra} and \eqref{noatomatzero} of Section \ref{sec:Prem}. Throughout this section we also assume that the heat semigroup $e^{-tA}$ has a kernel $e^{-tA}(x_2,y_2),$ $x_2,y_2\in Y,$ which is continuous on $\mathbb{R}^+\times Y\times Y,$ and satisfies the following Gaussian bounds.
\begin{equation}
\label{sec:OA,eq:gausbound} 0\le e^{-tA}(x_2,y_2)\leq \frac{C}{\mu (B(x_2,\sqrt{t}))}\exp(-c\zeta(x_2,y_2)^2\slash t),
\end{equation}
We also impose that for some $\delta>0,$ if $2\zeta(y_2,y'_2)\leq \zeta(x_2,y_2),$ then
\begin{equation}
\label{sec:OA,eq:heatlipsch}|e^{-tA}(x_2,y_2)-e^{-tA}(x_2,y'_2)|\leq \left(\frac{\zeta(y_2,y'_2)}{\sqrt{t}}\right)^{\delta}\frac{C}{\mu(B(x,\sqrt{t}))}\exp(-c \zeta(x_2,y_2)^2\slash t),
\end{equation}
while in general,
\begin{equation}
\label{sec:OA,eq:heatlipschngauss}|e^{-tA}(x_2,y_2)-e^{-tA}(x_2,y'_2)|\leq  \left(\frac{\zeta(y_2,y'_2)}{\sqrt{t}}\right)^{\delta}\frac{C}{\mu(B(x,\sqrt{t}))}.
\end{equation}

From \cite[Theorem 2.1]{Sik} (or rather its version for a single operator), it follows that, under \eqref{sec:OA,eq:gausbound}, the operator $A$ has a finite order Marcinkiewicz functional calculus on $L^p(Y,\mu),$ $1<p<\infty$. Examples of operators $A$ satisfying \eqref{sec:OA,eq:gausbound}, \eqref{sec:OA,eq:heatlipsch}, and \eqref{sec:OA,eq:heatlipschngauss} include, among others, the Laplacian $-\Delta$ and the harmonic oscillator $-\Delta+|x|^2$ on $L^2(\mathbb{R}^d,dx),$ or the Bessel operator $-\Delta-\sum_{j=1}^d \frac{2\alpha_j}{x_j}\partial_j$ (see \cite[Lemma 4.2]{dpw}).

Denote by $H^1=H^1(Y,\zeta,\mu)$ the atomic Hardy space in the sense of Coifman-Weiss \cite{CW}. More precisely, we say that a measurable function $b$ is an $H^1$-atom, if there exists a ball $B=B_{\zeta}\subseteq Y$, such that $\supp\, b \subset B,$ $\|b\|_{L^{\infty}(Y,\mu)}\leq 1/ \mu(B),$ and $\int_{Y}b(x_2)d\mu(x_2) =0.$ The space $H^1$ is defined as the set of all $g\in L^1(Y,\mu),$ which can be written as $g= \sum_{j=1}^{\infty} c_j b_j,$ where $b_j$ are atoms and $\sum_{j=1}^{\infty} |c_j|<\infty,$ $c_j\in\mathbb{C}.$ We equip $H^1$ with the norm
$
\|f\|_{H^1}=\inf \sum_{j=1}^{\infty} |c_j|,$
where the infimum runs over all absolutely summable $\{c_j\}_{j\in\mathbb{N}},$ for which $g= \sum_{j=1}^{\infty} c_j b_j,$ with $b_j$ being $H^1$-atoms. Note that from the very definition of $H^1$ we have $\|g\|_{L^1(Y,\mu)}\leq \|g\|_{H^1}.$

It can be shown that under \eqref{sec:OA,eq:gausbound}, \eqref{sec:OA,eq:heatlipsch}, and \eqref{sec:OA,eq:heatlipschngauss}, the space $$H^1_{max}=\{g\in L^1(Y,\mu)\colon \sup_{t>0}|e^{-tA}g|\in L^1(Y,\mu)\}$$ coincides with the atomic $H^1,$ i.e., there is a constant $C_{\mu}$ such that
\begin{equation}
\label{sec:OA,eq:maxchar}
C_{\mu}^{-1}\|g\|_{H^1} \leq \big\|\sup_{t>0}|e^{-tA}g|\big\|_{L^1(Y,\mu)}\leq C_{\mu} \|g\|_{H^1},\qquad g\in H^1(Y).
\end{equation}
The proof of \eqref{sec:OA,eq:maxchar} is similar to the proof of \cite[Proposition 4.1 and Lemma 4.3]{dpw}. The main trick is to replace the metric $\zeta$ with the measure distance (see \cite{CW})
$$\tilde{\zeta}(x_2,y_2)=\inf\{\mu(B)\colon B\textrm{ is a ball in Y},\, x_2,y_2\in B\},$$
 change the time $t$ via $$\mu(B(y,\sqrt t))=s,\qquad y\in Y,\quad t,\,s>0,$$ and apply Uchiyama's Theorem, see \cite[Corollary 1']{Uchi}. We omit the details. Note that by taking $r=e^{-t},$ the equation \eqref{sec:OA,eq:maxchar} can be restated as
\begin{equation}
\label{sec:OA,eq:maxcharr}
C_{\mu}^{-1}\|g\|_{H^1} \leq \big\|\sup_{0<r<1}|r^Ag|\big\|_{L^1(Y,\mu)}\leq C_{\mu} \|g\|_{H^1},\qquad g\in H^1(Y).
\end{equation}

For fixed $0<\varepsilon<1/2,$ define $M_{A,\varepsilon}(g)(x)=\int_Y\sup_{\varepsilon<r<1-\varepsilon}|r^A(x_2,y_2)||g(y_2)|\,d\mu(y_2).$ Then, a short reasoning using the Gaussian bound \eqref{sec:OA,eq:gausbound} and the doubling property of $\mu$ gives
\begin{equation}
\label{sec:OA,eq:maxcharrL1}
\left\|M_{A,\varepsilon}(g)\right\|_{L^1(Y,\mu)}\leq C_{\mu,\varepsilon} \|g\|_{L^1(Y,\mu)},\qquad g\in L^1(Y,\mu).
\end{equation}

Denote by $L^1_{\gamma}(H^1)$ the Banach space of those Borel measurable functions $f$ on $\mathbb{R}^d\times Y$ such that the norm
\begin{equation}\label{sec:OA,eq:L1H1norm}\|f\|_{L^1_{\gamma}(H^1)}=\int_{\mathbb{R}^d}\|f(x_1,\cdot)\|_{H^1}\,d\gamma(x),\end{equation}
is finite. In other words $L^1_{\gamma}(H^1)$ is the $L^1(\gamma)$ space of $H^1$-valued functions. Moreover, it is the closure of $$L^1_{\gamma}(\mathbb{R}^d)\odot H^1:=\bigg\{f\in L^1_{\gamma}(H^1)\colon f=\sum_{j}f_j^1\otimes f_j^2,\quad f_j^1 \in L^1_{\gamma}(\mathbb{R}^d),\, f_j^2\in H^1\bigg\}$$ in the norm given by \eqref{sec:OA,eq:L1H1norm}.

From now on in place of $\mL$ and $A$ we consider the tensor products $\mL \otimes I$ and $I \otimes A.$ Slightly abusing the notation we keep writing $\mL$ and $A$ for these operators. For the sake of brevity we write $L^p,$ $\|\cdot\|_{p}$ and $\|\cdot\|_{p \to p},$ instead of $L^p(\mathbb{R}^d \otimes Y, \gamma \otimes \mu),$ $\|\cdot\|_{L^p},$ and $\|\cdot\|_{L^p \to L^p},$ respectively. We shall also use the space $L^{1,\infty}:=L^{1,\infty}(\mathbb{R}^d\times Y,\gamma \otimes \mu),$ equipped with the quasinorm
 \begin{equation}\label{sec:OA,eq:weaknormgauss}\|f\|_{L^{1,\infty}}=\sup_{s>0}s (\gamma \otimes \mu)(\mathbb{R}^d\times Y\colon |f(x)|>s).\end{equation}
 Let $S$ be an operator which is of weak type $(1,1)$ with respect to $\gamma \otimes \mu.$ Then, $\|S\|_{L^1\to L^{1,\infty}}=\sup_{\|f\|_{1}=1}\|Sf\|_{L^{1,\infty}}$ is the best constant in its weak type $(1,1)$ inequality.

Let $m$ be a bounded function defined on $[0,\infty)\times \sigma(A),$ and let $m(\mL,A)$ be a joint spectral multiplier of $(\mL,A),$ as in \eqref{m(L)def}. Assume that for each $t>0,$ the operator $m(t\mL,A)$ is of weak type $(1,1)$ with respect to $\gamma\otimes \mu,$ with a weak type $(1,1)$ constant uniformly bounded with respect to $t.$ Then, from what was said before, we may conclude\footnote{At least in the case when $A$ has a discrete spectrum.} that for each fixed $a\in \sigma(A)$ the function $m(\cdot,a)$ has a holomorphic extension to the right half-plane. We limit ourselves to $m$ being of the following Laplace transform type:
\begin{equation} \label{sec:OA,eq:mult1} m(\la,a)=m_{\kappa}(\la,a):=\la\int_0^{\infty}e^{-\la t}e^{-a t}\kappa(t)\,dt,\qquad (\la,a)\in [0,\infty)\times \mathbb{R}_+,\end{equation}
with $\kappa\in L^{\infty}(\mathbb{R}_+,dt).$ In what follows we denote $\|\kappa\|_{\infty}=\|\kappa\|_{L^{\infty}(\mathbb{R}_+,dt)}.$

Observe that under the assumptions made on $A,$ the function $m_{\kappa}$ gives a well defined bounded operator $m_{\kappa}(\mL,A)$ on $L^2.$ Indeed, since $\chi_{\{a=0\}}(\mL,A)=0,$ we have
$$m_{\kappa}(\mL,A)=m_{\kappa}(\mL,A)\chi_{\{a>0\}}(\mL,A).$$ Moreover, $m_{\kappa}(0,a)=0$ for $a>0,$ and, consequently, the function $m_{\kappa}(\la,a)\chi_{\{a>0\}}$ is bounded on $[0,\infty)\times \mathbb{R}_+.$ Now, using the multivariate spectral theorem we see that $m_{\kappa}(\mL,A)$ is bounded on $L^2.$

The operator $m_{\kappa}(\mL,A)$ is also bounded on all $L^p$ spaces, $1<p<\infty.$ This follows from Corollary \ref{corHinf}. Moreover, we have $\|m\|_{p\to p}\leq C_p,$ with universal constants $C_p,$ $1<p<\infty.$

However, the following question is left open: is $m_{\kappa}(\mL,A)$ also of weak type $(1,1)?$ The main theorem of this section is a positive result in this direction.
\begin{thm}
\label{thm:OA}
Let $\mL$ be the Ornstein-Uhlenbeck operator on $L^2(\mathbb{R}^d,\gamma)$ and let $A$ be a non-negative self-adjoint operator on $L^2(Y,\zeta,\mu),$ satisfying all the assumptions of Section \ref{sec:Prem} and such that its heat kernel satisfies \eqref{sec:OA,eq:gausbound}, \eqref{sec:OA,eq:heatlipsch} and  \eqref{sec:OA,eq:heatlipschngauss}, as described in this section. Let $\kappa$ be a bounded function on $\mathbb{R}_+$ and let $m_{\kappa}$ be given by \eqref{sec:OA,eq:mult1}. Then the multiplier operator $m_{\kappa}(\mL,A)$ is bounded from $L^1_{\gamma}(H^1)$ to $L^{1,\infty}(\gamma\otimes \mu),$ i.e.\
\begin{equation}
\label{sec:OA,eq:weakH1type}
(\gamma\otimes \mu)(\{x\in \mathbb{R}^d\times Y\colon |m_{\kappa}(\mL,A)f(x)|>s\})\leq \frac{C_{d,\mu}\ki}{s}\|f\|_{L^1_{\gamma}(H^1)},\qquad s>0.
\end{equation}
\end{thm}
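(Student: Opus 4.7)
The plan is to adapt the standard Ornstein--Uhlenbeck local--global decomposition to the joint semigroup. Since
$$m_\kappa(\mathcal{L},A)f=\int_0^\infty \kappa(t)\,\mathcal{L} e^{-t\mathcal{L}}\,e^{-tA}f\,dt,$$
the kernel of $m_\kappa(\mathcal{L},A)$ factors as the $t$-integral of the Mehler kernel of $\mathcal{L} e^{-t\mathcal{L}}$ in $x_1$ times $e^{-tA}(x_2,y_2)$ in $x_2$. Following the approach of \cite{hmm}, I would split the Mehler kernel into a local piece $[\mathcal{L} e^{-t\mathcal{L}}]_{\mathrm{loc}}$, supported near the diagonal at admissible scale (proportional to $(1\wedge\sqrt{t})/(1+|x_1|)$), and a global piece $[\mathcal{L} e^{-t\mathcal{L}}]_{\mathrm{glob}}$, inducing the splitting $m_\kappa(\mathcal{L},A)=T_{\mathrm{loc}}+T_{\mathrm{glob}}$.

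For $T_{\mathrm{glob}}$, the classical OU estimate
$$\sup_{y_1}\int_0^\infty\!\!\int_{\mathbb{R}^d}\bigl|[\mathcal{L} e^{-t\mathcal{L}}]_{\mathrm{glob}}(x_1,y_1)\bigr|\,d\gamma(x_1)\,dt\le C\|\kappa\|_{\infty},$$
combined with the $L^1(\mu)$-contractivity of $e^{-tA}$ and Fubini, yields a Schur-type bound on the kernel of $T_{\mathrm{glob}}$ in the product space, hence $T_{\mathrm{glob}}\colon L^1(\gamma\otimes\mu)\to L^1(\gamma\otimes\mu)$ with norm $\lesssim\|\kappa\|_{\infty}$. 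Since $\|f\|_{L^1(\gamma\otimes\mu)}\le \|f\|_{L^1_\gamma(H^1)}$, the contribution of $T_{\mathrm{glob}}$ to \eqref{sec:OA,eq:weakH1type} is immediate.

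The main work is therefore the weak-type estimate for $T_{\mathrm{loc}}$. By a density and atomic argument in $y_2$, applied at a fixed threshold $s$ (to circumvent the failure of the triangle inequality in $L^{1,\infty}$), it suffices to consider $f(x_1,y_2)=g(x_1)b(y_2)$ with $g\in L^1(\gamma)$ and $b$ an $H^1$-atom supported in a ball $B=B_\zeta(y_2^0,r)\subset Y$, and to prove a uniform bound $\|T_{\mathrm{loc}}(g\otimes b)\|_{L^{1,\infty}(\gamma\otimes\mu)}\le C\|\kappa\|_{\infty}\|g\|_{L^1(\gamma)}$. I would split the $x_2$-region into $2B$ and $(2B)^c$. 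On $2B$, Cauchy--Schwarz, the $L^2(\mu)$-contractivity of $e^{-tA}$, the $L^2$-normalization $\|b\|_{L^2(\mu)}\le\mu(B)^{-1/2}$, and the doubling of $\mu$ yield $\int_{2B}|e^{-tA}b(x_2)|\,d\mu(x_2)\le C$ uniformly in $t>0$, which reduces the near-field estimate to the known weak-type $(1,1)$ on $L^1(\gamma)$ for the local part of Laplace-transform-type multipliers of $\mathcal{L}$ alone (as in \cite{hmm}). On $(2B)^c$, the cancellation $\int b\,d\mu=0$ combined with \eqref{sec:OA,eq:heatlipsch}--\eqref{sec:OA,eq:heatlipschngauss} gives
$$\int_{(2B)^c}|e^{-tA}b(x_2)|\,d\mu(x_2)\le C\min\bigl\{1,(r/\sqrt{t})^{\delta}\bigr\},$$
which, paired with $|\kappa(t)|$ and the $L^1(\gamma)$-norm in $x_1$ of $[\mathcal{L} e^{-t\mathcal{L}}]_{\mathrm{loc}}$ and integrated in $t$, produces an $L^1(\gamma\otimes\mu)$-bound for the far-field contribution by $C\|\kappa\|_{\infty}\|g\|_{L^1(\gamma)}$.

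The main obstacle is to make the near/far splitting uniform in the ball $B$ and in $\kappa$: one must fit the two regimes $\sqrt{t}\lesssim r$ (where \eqref{sec:OA,eq:heatlipschngauss} must be used since the geometric condition $2\zeta(y_2,y_2^0)\le \zeta(x_2,y_2)$ may fail) and $\sqrt{t}\gtrsim r$ (where the Gaussian-regularity \eqref{sec:OA,eq:heatlipsch} applies) into a single estimate integrable in $t$ against the $L^1(\gamma)$-norm of the local OU kernel. A secondary delicacy is the atom-to-function passage in the $L^{1,\infty}$ quasinorm, which is naturally handled by performing a Calder\'on--Zygmund decomposition in the Gaussian measure on $\mathbb{R}^d$ at level $s$ in the spirit of \cite{hmm}, and then invoking the atomic decomposition in $y_2$ on each resulting piece, in line with the interpolation framework alluded to in Theorem \ref{thm:interH1}.
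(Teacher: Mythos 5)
Your decomposition into local and global parts is the right starting point, but the way you distribute the work between them contains a genuine gap, and it sits exactly where the $H^1$ hypothesis is supposed to do its job. You claim that the global part satisfies a Schur-type bound
$\sup_{y_1}\int_0^\infty\int_{\mathbb{R}^d}|[\mathcal{L}e^{-t\mathcal{L}}]_{\mathrm{glob}}(x_1,y_1)|\,d\gamma(x_1)\,dt\lesssim 1$
and hence that $T_{\mathrm{glob}}$ is bounded on $L^1(\gamma\otimes\mu)$. This is false: for the Ornstein--Uhlenbeck operator the global parts of Laplace-transform-type multipliers (already for $\mathcal{L}$ alone, e.g.\ the imaginary powers $\mathcal{L}^{i\gamma}$) are of weak type $(1,1)$ with respect to $\gamma$ but \emph{not} of strong type $(1,1)$; the only available control on $\int_0^1|\partial_r\mathcal{M}_r(x_1,y_1)|\,dr$ off the local region is by the maximal Mehler kernel $\sup_{0<r<1}\mathcal{M}_r\chi_{N^c}$ (via the finite-sign-change argument of \cite{laptype}), whose associated operator is only weak $(1,1)$ by Sj\"ogren's theorem. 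If your $L^1\to L^1$ bound for the global part were true, the whole theorem would hold with $\|f\|_{L^1(\gamma\otimes\mu)}$ on the right-hand side and the $H^1$ norm in the second variable would be superfluous. In fact the $H^1(Y)$ hypothesis is needed precisely, and only, for the global part: after bounding $|T^{\mathrm{glob}}f|$ by the global maximal Mehler operator in $x_1$ applied to $\sup_r|r^Af(y_1,\cdot)(x_2)|$, Sj\"ogren's weak $(1,1)$ estimate in $x_1$ leaves you with $\int_{\mathbb{R}^d}\int_Y\sup_r|r^Af(x_1,\cdot)(x_2)|\,d\mu(x_2)\,d\gamma(x_1)$, and the sup in $r$ cannot be discarded; this quantity is comparable to $\|f\|_{L^1_\gamma(H^1)}$ by the maximal characterization of the Coifman--Weiss $H^1$ under the Gaussian and Lipschitz bounds on $e^{-tA}$. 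Your proposal never uses this, so the global estimate does not close.

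Correspondingly, you have placed the atomic/cancellation machinery in the wrong half. The local part needs no $H^1$ structure at all: after replacing $e^{-t\mathcal{L}}$ by the Euclidean heat semigroup $e^{\frac14(1-r^2)\Delta}$ (the difference being an integrable kernel on the local region, uniformly in the second variable by $L^1(\mu)$- and $L^\infty(\mu)$-contractivity of $e^{-tA}$), the remaining operator is a genuine Calder\'on--Zygmund operator on the space of homogeneous type $(\mathbb{R}^d\times Y,\eta,\Lambda\otimes\mu)$, using \eqref{sec:OA,eq:gausbound}--\eqref{sec:OA,eq:heatlipschngauss} to verify the standard size and H\"older estimates of the product kernel; hence $T^{\mathrm{loc}}$ is of weak type $(1,1)$ from plain $L^1(\gamma\otimes\mu)$. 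Your atom-by-atom treatment of the local part, besides being unnecessary, runs into the difficulty you yourself flag: a uniform $L^{1,\infty}$ bound on atoms does not sum to an $L^{1,\infty}$ bound on general $f$, and the sketched combination of a Gaussian Calder\'on--Zygmund decomposition in $x_1$ with an atomic decomposition in $y_2$ is not carried far enough to see that it closes. To repair the argument, move the $H^1$/maximal-function input to the global part and prove the local part as a Calder\'on--Zygmund estimate with respect to $\Lambda\otimes\mu$, transferring between $\Lambda$ and $\gamma$ on the local region in the usual way.
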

\begin{remark1}
Observe that $L^2 \cap L^1_{\gamma}(H^1)$ is dense in $L^1_{\gamma}(H^1).$ Thus, it is enough to prove \eqref{sec:OA,eq:weakH1type} for $f\in L^2 \cap L^1_{\gamma}(H^1).$
\end{remark1}
\begin{remark2}
Examples of multiplier operators of the form $m_{\kappa}(\mL,A)$ include the Riesz transforms $\mL(\mL+A)^{-1}$ (here $\kappa\equiv 1$) or the partial imaginary powers $\mL(\mL+A)^{-iu-1},$ $u\in \mathbb{R}$ (here $\kappa(t)=t^{iu}/\Gamma(iu+1)$). Note that since $I=\mL(\mL+A)^{-1}+A(\mL+A)^{-1},$ the boundedness of $\mL(\mL+A)^{-1}$ implies also the boundedness of $A(\mL+A)^{-1}$ from $L^1_{\gamma}(H^1)$ to $L^{1,\infty}(\gamma\otimes \mu).$  
\end{remark2}
Altogether, the proof of Theorem \ref{thm:OA} is rather long and technical, thus for the sake of the clarity of the presentation we do not provide all details. We use a decomposition of the kernel of the operator $T:=m_{\kappa}(\mL,A)$ into the global and local parts with respect to the Gaussian measure in the first variable. The local part will turn out to be of weak type $(1,1)$ (with respect to $\gamma \otimes \mu$) in the ordinary sense. For both the local and global parts we use ideas and some estimates from Garc\'ia-Cuerva, Mauceri, Sj\"ogren, and Torrea \cite{high} and \cite{laptype}.

Set $\kappa^{\varepsilon}=\kappa\chi_{[\varepsilon,1/\varepsilon]},$ $0<\varepsilon<1.$ Then, using the multivariate spectral theorem together with the fact that $A$ satisfies \eqref{noatomatzero}, we see that $\lim_{\varepsilon\to 0^+}m_{\kappa^{\varepsilon}}((\mL,A))=m_{\kappa}((\mL,A)),$ strongly in $L^2.$ Consequently, we also have convergence in the measure $\gamma\otimes \mu$. Since, clearly $\|\kappa^{\varepsilon}\|_{L^{\infty}(\mathbb{R}^+)}\leq \ki,$ it suffices to prove \eqref{sec:OA,eq:weakH1type} for $\kappa$ such that $\supp \kappa\subseteq[\varepsilon,1/\varepsilon].$\footnote{This reduction was suggested to us by Prof.\ Fulvio Ricci.} Thus, throughout the proof of Theorem \ref{thm:OA} we assume (often without further mention) that $\kappa$ is supported away from $0$ and $\infty.$ Additionally, the symbol $\lesssim$ denotes that the estimate is independent of $\kappa.$

In the proof of Theorem \ref{thm:OA} the variables with subscript $1,$ e.g.\ $x_1,y_1,$ are elements of $\mathbb{R}^d,$ while the variables with subscript $2,$ e.g.\ $x_2,y_2,$ are taken from $Y.$

We start with introducing some notation and terminology. Define $$L^{\infty}_c=\{f\in L^{\infty}\colon \supp f \textrm{ is compact}\}=\{f\in L^{\infty}(\mathbb{R}^d\times Y, \Lambda \otimes \mu)\colon \supp f \textrm{ is compact}\},$$
where $\Lambda$ is Lebesgue measure on $\mathbb{R}^d.$ Denoting $L^p(\mathbb{R}^d\times Y, \Lambda\otimes \mu):=L^p(\Lambda\otimes \mu),$ we see that for each $1\leq p<\infty,$ $L^{\infty}_c$ is a dense subspace of both $L^p$ and $L^p(\Lambda\otimes \mu).$ In particular, any operator which is bounded on $L^2$ or $L^2(\Lambda\otimes \mu)$ is well defined on $L^{\infty}_c.$ We also need the weak space $L^{1,\infty}(\Lambda\otimes \mu):=L^{1,\infty}(\mathbb{R}^d\times Y,\Lambda\otimes \mu)$ equipped with the quasinorm given by \eqref{sec:OA,eq:weaknormgauss} with $\gamma$ replaced by $\Lambda.$ An operator $S$ is of weak type $(1,1)$ precisely when
$$\|S\|_{L^1(\Lambda\otimes \mu) \to L^{1,\infty}(\Lambda\otimes \mu)}=\sup_{\|f\|_{L^1(\Lambda\otimes \mu)=1}}\|Sf\|_{L^{1,\infty}(\Lambda\otimes \mu)}<\infty.$$

Let $\eta$ be the product metric on $\mathbb{R}^d\times Y,$
\begin{equation}\label{sec:OA,eq:eta}\eta(x,y)=\max(|x_1-y_1|,\zeta(x_2,y_2)),\qquad x,y\in \mathbb{R}^d\times Y.
\end{equation}
Then it is not hard to see that the triple $(\mathbb{R}^d\times Y,\eta,\Lambda\otimes \mu)$ is a space of homogeneous type.
 \begin{defi}
 \label{defi:kernel}
 We say that a function $S(x,y)$ defined on the product $(\mathbb{R}^{d}\times Y)\times (\mathbb{R}^d\times Y)$ is a kernel of a linear operator $S$ defined on $L^{\infty}_c$ if, for every $f\in L^{\infty}_c$ and a.e.\ $x\in \mathbb{R}^d\times Y,$
$$Sf(x)=\int_{\mathbb{R}^d}\int_Y S(x,y)f(y)\,d\mu(y_2)\,dy_1.$$
\end{defi}
\begin{remark1}
We do not restrict to $x\not\in \supp f;$ the operators we consider later on are well defined in terms of their kernels for all $x.$ This is true because of the assumption that $\kappa$ is supported away from $0$ and $\infty.$
\end{remark1}
\begin{remark2}
The reader should keep in mind that the inner integral defining $Sf(x)$ is taken with respect to the Lebesgue measure $dy_1$ rather than the Gaussian measure $d\gamma(y_1).$ The reason for this convention is the form of Mehler's formula we use, see \eqref{sec:OA,eq:MehlformOUint}.
\end{remark2}

Let $\mM_r(x_1,y_1),$ $x_1,y_1 \in \mathbb{R}^d,$ $0<r<1,$  denote Mehler's kernel in $\mathbb{R}^d,$ i.e.\ the kernel of the operator $r^{\mL}=e^{-t\mL},$ with $r=e^{-t}.$ It is well known that, for $0<r<1,$
\begin{equation}
\label{sec:OA,eq:MehlformOU}
\mM_r(x_1,y_1)=\pi^{-d/2}(1-r^2)^{-d/2}\exp\bigg(-\frac{|rx_1-y_1|^2}{1-r^2}\bigg),\qquad x_1,y_1\in\mathbb{R}^d.
\end{equation}
and that, for all $g\in L^p(\mathbb{R}^d,\gamma)$ with $1\leq p\leq \infty,$
\begin{equation}\label{sec:OA,eq:MehlformOUint} r^{\mL}g(x_1)=\int_{\mathbb{R}^d}\mM_r(x_1,y_1)g(y_1)\,dy_1,\qquad x_1\in \mathbb{R}^d.\end{equation}
In particular, using \eqref{sec:OA,eq:MehlformOUint} it can be deduced that $\{e^{-t\mL}\}_{t>0}$ satisfies the contractivity condition \eqref{contra}. Additionally, a short computation using \eqref{sec:OA,eq:MehlformOU} gives
\begin{equation}
\label{sec:OA,eq:comM}
\begin{split}
\partial_r\,\mM_r(x_1,y_1)= &\pi^{-d/2}\left(dr-2r\frac{|rx_1-y_1|^2}{1-r^2}-\langle rx_1-y_1,x_1\rangle\right)(1-r^2)^{-d/2-1}\\
&\times \exp\bigg(-\frac{|rx_1-y_1|^2}{1-r^2}\bigg).
\end{split}
\end{equation}
From the above we see that, if $\varepsilon<r<1-\varepsilon,$ for some $0<\varepsilon<1/2,$ then
\begin{equation}
\label{sec:OA,eq:comMestfar01}
|\partial_r\,\mM_r(x_1,y_1)|\lesssim C_{\varepsilon}(1+|x_1|).
\end{equation}

Note that, since $\kappa$ is a bounded function supported away from $0$ and infinity, the function $\kappa_{\log}(r)=\kappa(-\log r),$ $0<r<1,$ is also bounded and supported away from $0$ and $1,$ say in an interval $[\varepsilon,1-\varepsilon],$ $0<\varepsilon<1/2.$ Moreover, we have $\|\kappa\|_{L^{\infty}((0,\infty),dt)}=\|\kappa_{\log}\|_{L^{\infty}((0,1),dt)}.$ In what follows, slightly abusing the notation, we keep the symbol $\kappa$ for the function $\kappa_{\log}.$

The change of variable $r=e^{-t}$ leads to the formal equality
$$T=\int_{0}^{1}\kappa(r)\mL r^{\mL}r^A \,\frac{dr}{r}=\int_{0}^{1}\kappa(r)\partial_r r^{\mL}r^A \,dr.$$ Suggested by the above we define the kernel
\begin{equation*}
K(x,y)=\int_0^{1}\partial_r \mM_r(x_1,y_1)\,r^A(x_2,y_2)\kappa(r)\,dr,\qquad x_1,y_1\in\mathbb{R}^d,\quad x_2,y_2\in Y,
\end{equation*}
with $r^A(x_2,y_2)=e^{(\log r) A}(x_2,y_2).$ Then we have.

\begin{lem}
\label{lem:kerKopT}
The function $K$ is a kernel of $T$ in the sense of Definition \ref{defi:kernel}.
\end{lem}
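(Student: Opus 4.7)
The strategy is to start from the spectral definition of $T=m_\kappa(\mL,A)$, perform the change of variables $r=e^{-t}$ to pass from heat semigroup time $t$ to $r\in(0,1)$, then use the explicit integral representations \eqref{sec:OA,eq:MehlformOUint} for $r^{\mL}$ and the pointwise kernel $r^A(x_2,y_2)$ for $r^A$, and finally invoke Fubini's theorem. Concretely, by the multivariate spectral theorem applied to the bounded function $m_\kappa(\lambda,a)\chi_{\{a>0\}}$, for $f\in L^2$ we have the $L^2$-valued identity
$$
Tf=\int_0^{\infty}\kappa(t)\,\mL\,e^{-t\mL}e^{-tA}f\,dt.
$$
Since $\mL e^{-t\mL}=-\partial_t e^{-t\mL}$ and, with $r=e^{-t}$, $-\partial_t=r\partial_r$, the substitution $dt=-dr/r$ yields, with our abuse of notation $\kappa(r):=\kappa(-\log r)$,
$$
Tf=\int_0^{1}\kappa(r)\,\partial_r(r^{\mL})\,r^Af\,dr,
$$
still as an $L^2$-valued (Bochner) integral, where now the integrand is supported in $r\in[\varepsilon,1-\varepsilon]$.

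Next I would pass from these abstract operators to their integral kernels. For $f\in L^{\infty}_c$ (which lies in $L^2$ since $\gamma\otimes\mu$-finite on compact sets), \eqref{sec:OA,eq:MehlformOUint} gives $r^{\mL}r^Af(x_1,x_2)=\int_{\mathbb R^d}\int_Y \mM_r(x_1,y_1)r^A(x_2,y_2)f(y_1,y_2)\,d\mu(y_2)\,dy_1$. Differentiation under the integral sign in $r$ is justified on $[\varepsilon,1-\varepsilon]$ by the pointwise bound \eqref{sec:OA,eq:comMestfar01}, so that
$$
\partial_r(r^{\mL})r^Af(x_1,x_2)=\int_{\mathbb R^d}\int_Y \partial_r\mM_r(x_1,y_1)\,r^A(x_2,y_2)\,f(y_1,y_2)\,d\mu(y_2)\,dy_1.
$$
Multiplying by $\kappa(r)$ and integrating in $r$, Fubini's theorem then produces the kernel $K(x,y)$. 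This gives $Tf(x)=\int_{\mathbb R^d}\int_Y K(x,y)f(y)\,d\mu(y_2)\,dy_1$ for a.e.\ $x$, which is the required identity.

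The only non-trivial issue is the justification of Fubini and of the interchange between the Bochner integral in $r$ and the spatial integrations; this is the step I expect to be the main obstacle. The estimate is however straightforward under the standing assumptions. Indeed, for $r\in[\varepsilon,1-\varepsilon]$, \eqref{sec:OA,eq:comMestfar01} gives $|\partial_r\mM_r(x_1,y_1)|\leq C_\varepsilon(1+|x_1|)$, while \eqref{sec:OA,eq:gausbound} yields $r^A(x_2,y_2)\leq C_\varepsilon/\mu(B(x_2,\sqrt{\varepsilon}))$ (together with an integrable Gaussian decay in $\zeta(x_2,y_2)$), and $f\in L^{\infty}_c$ is supported in a fixed compact set. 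Consequently the triple integrand is dominated by an integrable function of $(r,y_1,y_2)$, uniformly on compact subsets of $(x_1,x_2)$, and Fubini applies. Combined with the identification of Bochner integrals and pointwise a.e.\ integrals for continuous integrands, this completes the identification $K$ as a kernel of $T$ in the sense of Definition \ref{defi:kernel}.
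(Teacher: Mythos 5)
Your proposal is correct and follows essentially the same route as the paper: both pass from the spectral definition to the $r$-integral $\int_0^1\kappa(r)\,\partial_r(r^{\mL})\,r^A f\,dr$, identify $\partial_r(r^{\mL})$ with $\partial_r\mM_r$ via the estimate \eqref{sec:OA,eq:comMestfar01} and dominated convergence, and conclude with Fubini using the support restriction on $\kappa$. The only (cosmetic) difference is that the paper carries out the argument in weak form, pairing against a second test function $h\in L^\infty_c$ and handling the Fr\'echet derivative through explicit difference quotients, whereas you work directly with the pointwise kernel representation.
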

\begin{proof}[Proof (sketch)]
It is enough to show that for $f,h \in L_c^{\infty}$ we have
\begin{equation}
\label{sec:OA,eq:itisenough}
\langle Tf, h\rangle=\int_{\mathbb{R}^d\times Y}\int_{\mathbb{R}^d\times Y}K(x,y)f(y)h(x)\,d(\Lambda\otimes \mu)(y)\,d(\gamma\otimes \mu)(x).\end{equation}

From the multivariate spectral theorem together with Fubini's theorem we see that
\begin{equation}
\label{sec:OA,eq:multFubin}
\langle m(\mL,A)f, g\rangle_{L^2}=\int_0^1\kappa(r) \langle \mL r^{\mL-1} r^A f, h\rangle_{L^2}\,dr, \qquad f,\, h\in L^2.\end{equation}
Now, by the multivariate spectral theorem $\mL r^{\mL-1} (r^A f)=(\partial_r r^{\mL}) (r^A f),$ where on right hand side we have the Fr\'echet derivative in $L^2.$ Thus, $\langle \mL r^{\mL-1} r^A f, h\rangle_{L^2}$ is the limit (as $\delta\to 0$) of \begin{align}\label{sec:OA,eq:limandint}
&\delta^{-1}\langle((r+\delta)^{\mL}-r^{\mL}) r^A f, h\rangle_{L^2}\\
&\nonumber=
\int_{\mathbb{R}^d\times Y}\int_{\mathbb{R}^d\times Y}\frac{\mM_{r+\delta}(x_1,y_1)-\mM_{r}(x_1,y_1)}{\delta}r^{A}(x_2,y_2)f(y)\, h(x)\,d(\Lambda\otimes \mu)(y) \,d(\gamma\otimes\mu)(x).\end{align}

Since $f,g\in L_c^{\infty},$ using \eqref{sec:OA,eq:maxcharrL1}, \eqref{sec:OA,eq:comMestfar01}, and the dominated convergence theorem we justify taking the limit inside the integral in \eqref{sec:OA,eq:limandint} and obtain
$$\langle \mL r^{\mL-1} r^A f, h\rangle_{L^2}=\int_{\mathbb{R}^d\times Y}\int_{\mathbb{R}^d\times Y}\partial_r\mM_{r}(x_1,y_1)r^{A}(x_2,y_2)f(y)\, h(x)\,d(\Lambda\otimes \mu)(y) \,d(\gamma\otimes\mu)(x).$$
Plugging the above formula into \eqref{sec:OA,eq:multFubin}, and using Fubini's theorem (which is allowed by \eqref{sec:OA,eq:maxcharrL1}, \eqref{sec:OA,eq:comMestfar01} and the fact that $\supp \kappa \subseteq [\varepsilon, 1-\varepsilon]$), we arrive at \eqref{sec:OA,eq:itisenough}, as desired.
\end{proof}
Let $N_s,$ $s>0,$ be given by
\begin{equation*}
N_s=\big\{(x_1,y_1)\in\mathbb{R}^d\times \mathbb{R}^d\colon |x_1-y_1|\leq \frac{s}{1+|x_1|+|y_1|}\big\}.
\end{equation*}
We call $N_s$ the local region with respect to the Gaussian measure $\gamma$ on $\mathbb{R}^d.$ This set (or its close variant) is very useful when studying maximal operators or multipliers for $\mL.$ After being applied by Sj\"ogren in \cite{Sj1}, it was used in \cite{funccalOu}, \cite{high}, \cite{laptype}, and \cite{sharp}, among others.

The local and global parts of the operator $T$ are defined, for $f\in L_c^{\infty},$ by
\begin{equation}
\label{sec:OA,eq:glob1}
T^{glob}f(x)=\int_{\mathbb{R}^d}\int_Y (1-\chi_{N_2}(x_1,y_1))K(x,y)f(y)\,d\mu(y_2)\,dy_1,
\end{equation}
and
\begin{equation*}
T^{loc}f(x)=Tf(x)-T^{glob}f(x),
\end{equation*}
respectively. The estimates from Proposition \ref{prop:proglob} demonstrate that the integral \eqref{sec:OA,eq:glob1} defining $T^{glob}$ is absolutely convergent for a.e.\ $x,$ whenever $f\in L^1.$

Note that the cut-off considered in \eqref{sec:OA,eq:glob1} is the rough one from \cite[p.\ 385]{high} (though only with respect to $x_1,y_1$) rather than the smooth one from \cite[p.\ 288]{laptype}. In our case, using a smooth cut-off with respect to $\mathbb{R}^d$ does not simplify the proofs. That is because, even a smooth cut-off with respect to $x_1,y_1$ may not preserve a Calder\'on-Zygmund kernel in the full variables $(x,y).$ Moreover, the rough cut-off has the advantage that $(T^{loc})^{loc}=T^{loc}.$

We begin with proving the desired weak type $(1,1)$ property for $T^{glob}.$ Since
$$T^{glob}f(x)=\int_{0}^{1} \int_{\mathbb{R}^d}\partial_r \mM_r(x_1,y_1)\chi_{N_2^c}(x_1,y_1)\,r^A(f(y_1,\cdot))(x_2)\,dy_1\,\kappa(r)\,dr$$
and $\supp \kappa \subseteq [\varepsilon,1-\varepsilon]$ we have
\begin{equation}
\label{sec:OA,eq:glob2}
\begin{split}
&|T^{glob}f(x)|\leq \|\kappa\|_{\infty}\int_{\varepsilon}^{1-\varepsilon} \int_{\mathbb{R}^d}|\partial_r \mM_r(x_1,y_1)|\chi_{N_2^c}(x_1,y_1)|r^A(f(y_1,\cdot))(x_2)|\,dy_1\,dr\\
&\leq \|\kappa\|_{\infty} \int_0^{1} \int_{\mathbb{R}^d}|\partial_r \mM_r(x_1,y_1)|\chi_{N_2^c}(x_1,y_1)\sup_{\varepsilon<r<1-\varepsilon}|r^A(f(y_1,\cdot))(x_2)|\,dy_1\,dr\\
&:=\|\kappa\|_{\infty}T_*^{glob}f(x).
\end{split}
\end{equation}
Moreover, the following proposition holds.
\begin{pro}
\label{prop:proglob}
The operator $T_*^{glob}$ is well defined on $L^1$ and bounded from $L^1_{\gamma}(H^1)$ to $L^{1,\infty}(\gamma\otimes \mu),$ with a bound independent of $0<\varepsilon<1/2.$ Thus, $T^{glob}$ is also well defined on $L^1$ and we have
\begin{equation*}
(\gamma\otimes \mu)(\{x\in \mathbb{R}^d\times Y\colon |T^{glob}f(x)|>s\})\leq \frac{C_{d,\mu}\ki}{s}\|f\|_{L^1_{\gamma}(H^1)},\qquad s>0.
\end{equation*}
\end{pro}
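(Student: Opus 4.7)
The plan is to separate the $\mathbb{R}^d$ and $Y$ variables by first absorbing the inner supremum in $r$ into an $H^1$-maximal function on $Y$, and then reducing the whole problem to a single scalar Gaussian estimate on $\mathbb{R}^d$. Since the kernel $r^A(x_2,y_2)$ is non-negative, for $f\in L^2\cap L^1_\gamma(H^1)$ I would dominate the supremum appearing in $T_*^{glob}$ by the $\varepsilon$-independent maximal function $M_A^{*}g(x_2):=\sup_{0<r<1}|r^Ag(x_2)|$, so that the maximal characterization \eqref{sec:OA,eq:maxcharr} gives
$$\|M_A^{*}(f(y_1,\cdot))\|_{L^1(Y,\mu)}\leq C_\mu\,\|f(y_1,\cdot)\|_{H^1},$$
uniformly in $y_1$ and $\varepsilon$. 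Introducing the purely scalar operator on $\mathbb{R}^d$,
$$Vg(x_1):=\int_0^1\!\int_{\mathbb{R}^d}|\partial_r\mM_r(x_1,y_1)|\,\chi_{N_2^c}(x_1,y_1)\,g(y_1)\,dy_1\,dr,$$
one has the pointwise bound $T_*^{glob}f(x_1,x_2)\leq V\!\bigl[M_A^{*}(f(\cdot,x_2))\bigr](x_1)$, at which point the two variables decouple completely.

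The central technical step is then the scalar Gaussian kernel estimate
$$I(y_1):=\int_0^1\!\int_{\mathbb{R}^d}|\partial_r\mM_r(x_1,y_1)|\,\chi_{N_2^c}(x_1,y_1)\,d\gamma(x_1)\,dr\leq C_d\,e^{-|y_1|^2},\qquad y_1\in\mathbb{R}^d,$$
which I would extract from the global analysis of Laplace-transform-type $\mL$-multipliers carried out in \cite{high} and \cite{laptype}. Their arguments rest on the explicit expression \eqref{sec:OA,eq:comM}, a sign-change analysis of $\partial_r\mM_r$ that allows one to move the absolute value through the $r$-integral at the cost of a constant factor, and the quantitative separation $|x_1-y_1|>2/(1+|x_1|+|y_1|)$ enforced by $\chi_{N_2^c}$, which forces the Gaussian factor $\exp(-|rx_1-y_1|^2/(1-r^2))$ to overwhelm the $(1-r^2)^{-d/2-1}$ singularity uniformly after integration against $d\gamma(x_1)$.

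Granting $I(y_1)\lesssim e^{-|y_1|^2}$, a direct application of Fubini--Tonelli then yields
\begin{align*}
\int_{\mathbb{R}^d\times Y}T_*^{glob}f\,d(\gamma\otimes\mu)
&\leq \int_{\mathbb{R}^d}\|M_A^{*}(f(y_1,\cdot))\|_{L^1(Y,\mu)}\,I(y_1)\,dy_1\\
&\leq C_d\int_{\mathbb{R}^d}\|M_A^{*}(f(y_1,\cdot))\|_{L^1(Y,\mu)}\,e^{-|y_1|^2}\,dy_1\leq C_{d,\mu}\,\|f\|_{L^1_\gamma(H^1)},
\end{align*}
and Chebyshev's inequality upgrades this to the desired weak-type $(1,1)$ bound for $T_*^{glob}$ uniformly in $\varepsilon$; the corresponding bound for $T^{glob}$ follows at once from \eqref{sec:OA,eq:glob2}. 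The mere well-definedness of $T_*^{glob}$ on $L^1$ uses the same scheme with \eqref{sec:OA,eq:maxcharrL1} replacing \eqref{sec:OA,eq:maxcharr} (at the cost of an $\varepsilon$-dependent constant, which is enough for well-definedness).

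The principal obstacle is clearly the scalar estimate $I(y_1)\lesssim e^{-|y_1|^2}$: the interplay between the sign oscillations of $\partial_r\mM_r$, the singularity at $r=1$, and the precise geometry of the global region $N_2^c$ is delicate, and I would import the kernel manipulations of \cite{high,laptype} rather than reproduce them.
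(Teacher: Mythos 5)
Your reduction of the $Y$-variable is fine and matches the paper: dominating $\sup_{\varepsilon<r<1-\varepsilon}|r^A(f(y_1,\cdot))|$ by the full maximal function and invoking \eqref{sec:OA,eq:maxcharr} (resp.\ \eqref{sec:OA,eq:maxcharrL1} for mere well-definedness) is exactly the right move. The gap is in the $\mathbb{R}^d$-variable: your central scalar estimate $I(y_1)\lesssim e^{-|y_1|^2}$ is false, and with it the whole strong-type-plus-Chebyshev strategy collapses. Indeed, $I(y_1)\le C_d e^{-|y_1|^2}$ is precisely the statement that the global part of the Mehler maximal operator is of \emph{strong} type $(1,1)$ with respect to $\gamma$, and this fails. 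To see it, take $y_1=ae_1$ with $a$ large and $x_1=se_1+v$ with $1\le s\le a/2$, $|v|\le 1$ (deep inside $N_2^c$). Choosing $r=s/a$ one gets $|rx_1-y_1|^2/(1-r^2)=(a^2-s^2)+s^2|v|^2/(a^2-s^2)\le a^2-s^2+|v|^2$, hence $\sup_{0<r<1}\mM_r(x_1,y_1)\gtrsim e^{s^2-a^2}$, while $\lim_{r\to0^+}\mM_r(x_1,y_1)=\pi^{-d/2}e^{-a^2}$; so $\int_0^1|\partial_r\mM_r(x_1,y_1)|\,dr\ge \sup_r\mM_r-\mM_{0^+}\gtrsim e^{s^2-a^2}$ there. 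Integrating against $d\gamma(x_1)=\pi^{-d/2}e^{-|x_1|^2}dx_1$ over this tube of volume $\approx a$ gives $I(ae_1)\gtrsim a\,e^{-a^2}$, so $I(y_1)/e^{-|y_1|^2}\to\infty$. Consequently your operator $V$ does not map $L^1(\gamma)$ into $L^1(\gamma)$, and $T^{glob}$ is genuinely not bounded from $L^1_\gamma(H^1)$ into $L^1(\gamma\otimes\mu)$; only the weak-type conclusion is available.

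The repair cannot stay within a kernel-size ($L^1\to L^1$) argument. The paper's route is: after the finite sign change argument one has the pointwise domination $T_*^{glob}f(x)\le C\,T_1^*\bigl(f_2^*(\cdot,x_2)\bigr)(x_1)$ with $f_2^*(y_1,x_2)=\sup_{\varepsilon<r<1-\varepsilon}|r^A(f(y_1,\cdot))(x_2)|$ and $T_1^*g(x_1)=\int\sup_{0<r<1}\mM_r(x_1,y_1)\chi_{N_2^c}(x_1,y_1)|g(y_1)|\,dy_1$; then one uses that $T_1^*$ is of \emph{weak} type $(1,1)$ with respect to $\gamma$ (Sj\"ogren's theorem, via \cite[Theorem 3.8]{laptype} and \cite[Lemma 2]{Sj1} — a delicate level-set argument, not an $L^1$ kernel bound), applies this for each fixed $x_2\in Y$, and integrates the resulting family of one-variable weak-type inequalities over $Y$ by Fubini. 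This ``slice-wise weak type plus integration in the other variable'' step is what produces a weak-type bound on the product space without ever passing through a strong $(1,1)$ bound, and it is the ingredient your argument is missing.
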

\begin{proof}
By \eqref{sec:OA,eq:glob2} it clearly suffices to focus on $T_*^{glob}.$

Using the finite sign change argument, i.e.\ the inequality (2.3) from the proof of \cite[Lemma 2.1]{laptype}, we see that
$$T_*^{glob}f(x)\leq C \int_{\mathbb{R}^d}\sup_{0<r<1}\mM_r(x_1,y_1)\chi_{N_2^c}(x_1,y_1)f^*_2(y_1,x_2)\,dy_1, $$
where $f^*_2(x_1,x_2)=\sup_{\varepsilon<r<1-\varepsilon}|r^A(f(x_1,\cdot))(x_2)|.$ Moreover, from \cite[Theorem 3.8]{laptype} and \cite[Lemma 2]{Sj1} it follows that the operator $$L^1(\mathbb{R}^d,\gamma)\ni g\mapsto \int_{\mathbb{R}^d}\sup_{0<r<1}\mM_r(x_1,y_1)\chi_{N_2^c}(x_1,y_1)|g|(y_1)\,dy_1:=T^{*}_1 g(x_1),$$ is of weak type $(1,1)$ with respect to $\gamma.$
Hence, using Fubini's theorem we have
\begin{align}
&\nonumber(\gamma\otimes \mu)(\{x\in \mathbb{R}^d\times Y\colon |T_*^{glob}f(x)|>s\})= \int_{Y}\gamma(\{x_1\in\mathbb{R}^d\colon |T_*^{glob}f(x)|>s\})\,d\mu(x_2)\\ \nonumber
&\leq \int_{Y}\gamma(\{x_1\in\mathbb{R}^d\colon |T^{*}_1(f^*_2(\cdot,x_2))(x_1)|>s\})\,d\mu(x_2)\leq  \int_{Y}\frac{C_d}{s}\int_{\mathbb{R}^d}f^*_2(x_1,x_2)\,d\gamma(x_1)\,d\mu(x_2)\\
&=\frac{C_d}{s} \int_{\mathbb{R}^d}\int_{Y}\sup_{\varepsilon<r<1-\varepsilon}|r^A(f(x_1,\cdot))(x_2)|\,d\mu(x_2)\,d\gamma(x_1). \label{sec:OA,eq:FubestTglob}
\end{align}
Now, from \eqref{sec:OA,eq:maxcharrL1} we see that, for each fixed $0<\varepsilon<1/2,$ the operator $T_*^{glob}$ is of weak type $(1,1)$ with respect to $\gamma\otimes \mu;$ in particular, it is well defined for $f\in L^1.$ Finally, using \eqref{sec:OA,eq:FubestTglob} and \eqref{sec:OA,eq:maxcharr}, we obtain the (independent of $\varepsilon$) boundedness of $T_*^{glob}$ from $L^1_{\gamma}(H^1)$ to $L^{1,\infty}(\gamma\otimes \mu).$
\end{proof}

Now we turn to the local part $T^{loc}.$ As we already mentioned, $T^{loc}$ turns out to be of (classical) weak type $(1,1)$ with respect to $\gamma\otimes \mu.$
\begin{pro}
\label{thm:tlocalpart}
The operator $T^{loc}$ is of weak type $(1,1)$ with respect to $\gamma\otimes \mu,$ and $\|T^{loc}\|_{L^1\to L^{1,\infty}}\lesssim \ki.$ Thus, $T^{loc}$ is also bounded from $L^1_{\gamma}(H^1)$ to $L^{1,\infty}(\gamma\otimes \mu),$ and
\begin{equation*}
(\gamma\otimes \mu)(\{x\in \mathbb{R}^d\times Y\colon |T^{loc}f(x)|>s\})\leq \frac{C_{d,\mu}\ki}{s}\|f\|_{L^1_{\gamma}(H^1)},\qquad s>0.
\end{equation*}
\end{pro}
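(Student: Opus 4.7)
The plan is to view $T^{loc}$ as a Calder\'on--Zygmund operator on the space of homogeneous type $(\mathbb{R}^d\times Y,\eta,\Lambda\otimes \mu).$ The crucial observation is that on the local region $N_2$ one has $|x_1-y_1|\,(1+|x_1|+|y_1|)\leq 2,$ hence $\big||x_1|^2-|y_1|^2\big|\leq 2,$ so $e^{-|x_1|^2}$ and $e^{-|y_1|^2}$ are comparable up to an absolute constant. Since the kernel $\chi_{N_2}(x_1,y_1)K(x,y)$ of $T^{loc}$ is supported in this region, the comparability allows one to freely interchange $\gamma\otimes\mu$ and $\Lambda\otimes\mu$ on both the domain and target side; in particular, the desired weak type $(1,1)$ bound with respect to $\gamma\otimes \mu$ is equivalent to the analogous one with respect to $\Lambda\otimes \mu,$ which is doubling on $\eta$-balls.

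I would first establish the $L^2$-boundedness of $T^{loc}.$ By the multivariate spectral theorem $\|T\|_{L^2(\gamma\otimes \mu)\to L^2(\gamma\otimes \mu)}\leq \ki,$ since $|m_\kappa(\lambda,a)|\leq \ki$ whenever $\lambda,a\geq 0;$ combined with Proposition \ref{prop:proglob} and the $\gamma/\Lambda$ comparability on $N_2$ this yields an $L^2(\Lambda\otimes\mu)$ bound for $T^{loc}=T-T^{glob}$ with constant $\lesssim \ki.$ Next I would verify Calder\'on--Zygmund size and smoothness estimates for $K^{loc}$ on $N_2\times Y\times Y.$ Specifically, I would prove
\[
|K^{loc}(x,y)|\lesssim \ki\,\big[(\Lambda\otimes \mu)(B_\eta(x,\eta(x,y)))\big]^{-1},
\]
together with a H\"older-type smoothness bound of order $\delta$ in the $y$-variable. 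The size bound comes from the explicit formula \eqref{sec:OA,eq:comM} for $\partial_r \mM_r$ and the Gaussian bound \eqref{sec:OA,eq:gausbound} on $r^A(x_2,y_2),$ coupled with a splitting of the $r$-integral according to whether $1-r$ is smaller or larger than $\eta(x,y)^2.$ Smoothness in $y_2$ follows directly from the Lipschitz bounds \eqref{sec:OA,eq:heatlipsch} and \eqref{sec:OA,eq:heatlipschngauss}, while smoothness in $y_1$ is obtained by taking an additional derivative of Mehler's kernel and repeating the same splitting argument.

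The main technical difficulty is to carry out the kernel estimates uniformly in $\kappa$ near $r=1,$ where $\partial_r \mM_r$ carries a $(1-r^2)^{-d/2-1}$ singularity. On $N_2$ this is compensated by the Gaussian factor $\exp(-|rx_1-y_1|^2/(1-r^2))$ together with the decomposition $rx_1-y_1=-(1-r)x_1+r(x_1-y_1)$ and the local-region inequality $|x_1-y_1|\leq 2/(1+|x_1|+|y_1|).$ Once the CZ estimates are in place, standard Calder\'on--Zygmund theory on $(\mathbb{R}^d\times Y,\eta,\Lambda\otimes \mu)$ gives $\|T^{loc}\|_{L^1(\Lambda\otimes \mu)\to L^{1,\infty}(\Lambda\otimes \mu)}\lesssim \ki,$ which transfers to the corresponding bound with respect to $\gamma\otimes \mu$ via the local-region comparability. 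The $L^1_\gamma(H^1)\to L^{1,\infty}(\gamma\otimes \mu)$ statement then follows immediately from the continuous embedding $L^1_\gamma(H^1)\hookrightarrow L^1(\gamma\otimes \mu).$
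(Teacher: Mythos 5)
Your plan hinges on the claim that the truncated kernel $\chi_{N_2}(x_1,y_1)K(x,y)$ satisfies Calder\'on--Zygmund size \emph{and smoothness} estimates on $(\mathbb{R}^d\times Y,\eta,\Lambda\otimes\mu)$, and this is exactly where the argument breaks down. The cut-off $\chi_{N_2}$ is rough: taking $y=(y_1,y_2)$ with $(x_1,y_1)$ just inside $N_2$ and $y'=(y_1',y_2)$ just outside, with $|y_1-y_1'|$ arbitrarily small, the difference $K^{loc}(x,y)-K^{loc}(x,y')$ equals the full kernel $K(x,y)$, while the required gain $(\eta(y,y')/\eta(x,y))^{\delta}$ tends to zero; so the H\"older estimate in $y$ fails outright. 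Replacing $\chi_{N_2}$ by a smooth cut-off in $(x_1,y_1)$ does not repair this: when $\eta(x,y)$ is realized by $\zeta(x_2,y_2)\gg|x_1-y_1|$, the variation of the cut-off in $y_1$ (of size $\sim|y_1-y_1'|(1+|x_1|)$) is not dominated by $(\eta(y,y')/\eta(x,y))^{\delta}$ times the kernel size. The paper flags precisely this obstruction in the remark following the definition of $T^{glob}$ and $T^{loc}$ ("even a smooth cut-off with respect to $x_1,y_1$ may not preserve a Calder\'on--Zygmund kernel in the full variables $(x,y)$"), and this is why it does \emph{not} run CZ theory on the truncated kernel. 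Instead it writes $T=\tilde T+D$ with $\tilde T$ built from the \emph{untruncated} convolution kernel $\partial_r\mW_r(x_1-y_1)r^A(x_2,y_2)$, proves that $\tilde T$ is a genuine CZ operator on the product space (Lemma \ref{lem:calzyg}), bounds $D^{loc}$ directly on $L^1(\Lambda\otimes\mu)$ and $L^\infty(\Lambda\otimes\mu)$ via the pointwise comparison of $\partial_r\mM_r$ with $\partial_r\mW_r$ on $N_2$ (Lemmas \ref{lem:diffest} and \ref{prop:dloc}), and only then passes to local parts and between the measures $\gamma\otimes\mu$ and $\Lambda\otimes\mu$ by the covering argument of Lemmas \ref{lem:Lem31laptype}--\ref{lem:Lem33laptype} and \ref{lem:locinher}.

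Two further points are glossed over. First, Proposition \ref{prop:proglob} gives a weak type bound for $T^{glob}$ from $L^1_\gamma(H^1)$, not an $L^2$ bound, so it cannot be "combined" with the $L^2(\gamma\otimes\mu)$ bound for $T$ to produce an $L^2(\Lambda\otimes\mu)$ bound for $T^{loc}$ as you assert. Second, the statement that the local comparability of $e^{-|x_1|^2}$ and $e^{-|y_1|^2}$ lets you "freely interchange" $\gamma\otimes\mu$ and $\Lambda\otimes\mu$ is not a pointwise triviality for operator norms: there is no global constant relating the two measures, and the transfer genuinely requires the bounded-overlap covering by the balls $B_j$ (items (iii), (vi), (vii) of Lemma \ref{lem:Lem31laptype}) together with the $S_1/S_2$ splitting in Lemma \ref{lem:locinher}. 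So the overall strategy (reduce to Lebesgue measure, use CZ theory) is in the right spirit, but the kernel you propose to feed into CZ theory is not a CZ kernel, and the measure transfer and the $L^2$ input both need the localization machinery you omit.
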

From now on we focus on the proof of Proposition \ref{thm:tlocalpart}. The key ingredient is a comparison (in the local region) of the kernel $K$ with a certain convolution kernel $\tilde{K}$ in the variables $(x_1,y_1),$ i.e.\ depending on $(x_1-y_1,x_2,y_2).$ We also heavily exploit the fact that in the local region $N_2$ the measure $\gamma\otimes \mu$ is comparable with $\Leb\otimes \mu.$

For further reference we restate \cite[Lemma 3.1]{laptype}. The first five items of Lemma \ref{lem:Lem31laptype} are exactly items i)-v) from \cite[Lemma 3.1]{laptype}, item vi) is \cite[eq.\ (3.2) p.\ 289]{laptype}, while item vii) is \cite[eq.\ (3.3) p.\ 289]{laptype}.
\begin{lem}
\label{lem:Lem31laptype}
There exists a family of balls on $\mathbb{R}^d$
$$B_j=B\left(x_1^j,\frac{1}{20(1+|x_1^j|)}\right),$$
such that:
\begin{enumerate}[i)]
\item the family $\{B_j\colon j\in \mathbb{N}\}$ covers $\mathbb{R}^d$;
\item the balls $\{\frac14 B_j \colon j\in \mathbb{N}\}$ are pairwise disjoint;
\item for any $\beta > 0$, the family $\{\beta B_j \colon j\in \mathbb{N}\}$ has bounded overlap, i.e.;
$\sup \sum_j \chi_{\beta B_j}(x_1)\leq C$;
\item $B_j \times 4B_j \subseteq N_1$ for all $j\in \mathbb{N}$;
\item if $x_1 \in B_j,$ then $B(x_1,\frac{1}{20(1+|x_1|)}) \subseteq 4B_j$;
\item for any measurable $V\subseteq 4B_j,$ we have $\gamma(V)\approx e^{-|x_1^j|^2}\Lambda(V);$
\item $N_{1/7}\subseteq \bigcup_j B_j \times 4B_j \subseteq N_2.$
\end{enumerate}
\end{lem}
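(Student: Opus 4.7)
The plan is to carry out a Whitney-type covering argument adapted to the ``local scale'' function $\rho(x_1):=\tfrac{1}{20(1+|x_1|)},$ which is the radius appearing in the definition of $B_j.$ The key preliminary observation is that $\rho$ varies slowly: if $|x_1-y_1|\leq \rho(x_1),$ then $||x_1|-|y_1||\leq 1/20,$ so the ratio $\rho(y_1)/\rho(x_1)$ is pinned in $[19/21,\,21/19].$ This slow variation is the engine of the whole argument.

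To construct the family $\{B_j\},$ pick a maximal collection of centers $\{x_1^j\}\subseteq \mathbb{R}^d$ such that the quarter-balls $\tfrac14 B_j=B(x_1^j,\tfrac{1}{80(1+|x_1^j|)})$ have pairwise disjoint interiors; this yields (ii) by construction. For (i), if some $z\in\mathbb{R}^d$ avoided every $B_j,$ then $B(z,\tfrac{1}{80(1+|z|)})$ would be disjoint from all $\tfrac14 B_j$ thanks to the slow variation of $\rho,$ contradicting maximality. Item (v) is a direct triangle-inequality check using that $\rho(x_1)\leq \tfrac{21}{19}\rho(x_1^j)$ on $B_j.$ For (iii), if $x_1\in \beta B_j,$ then $|x_1^j|$ is comparable to $|x_1|$ (with constants depending on $\beta$), so the centers of all such $\tfrac14 B_j$ lie in a ball of radius $\lesssim_\beta \rho(x_1)$ around $x_1$ while the $\tfrac14 B_j$'s themselves are disjoint and of Lebesgue volume $\gtrsim \rho(x_1)^d;$ a volume count caps their number by a constant depending only on $\beta$ and $d.$

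Items (iv), (vi), (vii) reduce to book-keeping with the local region. For (iv), if $x_1\in B_j$ and $y_1\in 4B_j$ then $|x_1-y_1|\leq 5\rho(x_1^j)=\tfrac{1}{4(1+|x_1^j|)},$ while $1+|x_1|+|y_1|\leq \tfrac{5}{4}+2|x_1^j|\leq 4(1+|x_1^j|),$ which gives $|x_1-y_1|\leq \tfrac{1}{1+|x_1|+|y_1|},$ i.e.\ $(x_1,y_1)\in N_1.$ For (vi), on $4B_j$ one has $\bigl| |x_1|^2-|x_1^j|^2\bigr| = |x_1-x_1^j|\cdot|x_1+x_1^j|\leq 4\rho(x_1^j)(2|x_1^j|+4\rho(x_1^j))\leq C,$ so the Gaussian density $\pi^{-d/2}e^{-|x_1|^2}$ is comparable to $e^{-|x_1^j|^2}$ throughout $4B_j.$ The right inclusion in (vii) is immediate from (iv). For the left inclusion, given $(x_1,y_1)\in N_{1/7}$ choose $j$ with $x_1\in B_j$ via (i); combining $|x_1-y_1|\leq \tfrac{1}{7(1+|x_1|+|y_1|)}$ with (v) forces $|x_1^j-y_1|\leq 4\rho(x_1^j),$ i.e.\ $y_1\in 4B_j,$ with the factor $1/7$ calibrated precisely to absorb the slow variation of $\rho.$

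The main obstacle is not conceptual but numerical: the constants $\tfrac{1}{20},$ $4,$ $2,$ and $\tfrac{1}{7}$ must be tuned together so that (iv), (v) and (vii) hold simultaneously with nonnegative slack. Once that is verified, everything reduces to the standard Whitney machinery in a slowly varying metric. Since the statement is exactly \cite[Lemma~3.1]{laptype} together with (3.2) and (3.3) from the same paper, in the write-up one can simply refer the reader there, but the sketch above records the self-contained argument.
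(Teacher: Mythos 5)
The paper gives no proof of this lemma: it is a verbatim restatement of \cite[Lemma 3.1, eqs.\ (3.2)--(3.3)]{laptype}, so your closing remark that one may simply refer the reader there is exactly what the paper does, and your Whitney-type sketch of items (i)--(vi) (slow variation of $\rho(x_1)=\tfrac{1}{20(1+|x_1|)}$, maximality for (i), volume count for (iii), the elementary estimates for (iv) and (vi)) is the standard argument and checks out.

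The one step of your self-contained sketch that does not close as written is the left inclusion $N_{1/7}\subseteq\bigcup_j B_j\times 4B_j$ in (vii) --- precisely the spot where you say the constants ``must be tuned together'' but then do not verify. Your stated mechanism is to combine the $N_{1/7}$ condition with (v); but (v) applies only when $|x_1-y_1|\leq\rho(x_1)$, whereas $N_{1/7}$ gives only $|x_1-y_1|\leq\tfrac{1}{7(1+|x_1|+|y_1|)}\leq\tfrac{1}{7(1+|x_1|)}=\tfrac{20}{7}\rho(x_1)$, almost three local radii, so (v) is not applicable. The direct triangle inequality also just barely fails: for $x_1\in B_j$ one has $\rho(x_1)\leq\tfrac{20}{19}\rho(x_1^j)$, hence $|y_1-x_1^j|\leq\bigl(\tfrac{20}{7}\cdot\tfrac{20}{19}+1\bigr)\rho(x_1^j)=\tfrac{533}{133}\,\rho(x_1^j)\approx 4.008\,\rho(x_1^j)$, which overshoots $4\rho(x_1^j)$. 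To close the argument you must actually use the $|y_1|$ in the denominator of the $N_{1/7}$ condition: writing $\delta=|x_1-y_1|$ and using $|y_1|\geq\bigl||x_1|-\delta\bigr|$, a short case analysis (according to whether $\delta$ exceeds $|x_1|$ or not) shows $\delta\leq 3\rho(x_1^j)$ and hence $|y_1-x_1^j|\leq 4\rho(x_1^j)$ in all admissible configurations, with the worst case around $|x_1^j|\approx 0.15$ giving roughly $3.92\,\rho(x_1^j)$. So the lemma is true with these constants, but (vii) requires this extra computation rather than being a corollary of (v).
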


The next lemma we need is a two variable version of \cite[Lemma 3.3]{laptype} (see also the following remark). The proof is based on Lemma \ref{lem:Lem31laptype} and proceeds as in \cite{laptype}. We omit the details, as the only ingredient that needs to be added is an appropriate use of Fubini's theorem. In Lemma \ref{lem:Lem33laptype} by $\nu$ we denote one of the measures $\gamma$ or $\Lambda.$
\begin{lem}
\label{lem:Lem33laptype}
Let $S$ be a linear operator defined on $L_c^{\infty}$ and
set
$$S_1f(x)=\sum_j \chi_{B_j}(x_1)S(\chi_{4B_j}(y_1)f)(x),$$
where $B_j$ is the family of balls from Lemma \ref{lem:Lem31laptype}. We have the following:
\begin{enumerate}[i)]
\item If $S$ is of weak type $(1,1)$ with respect to the measure $\nu\otimes \mu,$ then $S_1$ is of weak type $(1,1)$ with respect to both $\gamma \otimes \mu$ and $\Lambda \otimes \mu;$ moreover, $$\|S_1\|_{L^1\to L^{1,\infty}}+\|S_1\|_{L^1(\Lambda\otimes \mu)\to L^{1,\infty}(\Lambda\otimes \mu)}\lesssim \|S\|_{L^1(\nu\otimes \mu)\to L^{1,\infty}(\nu\otimes \mu)}.$$
\item If $S$ is bounded on $L^p(\mathbb{R}^d\times Y,\nu\otimes \mu),$ for some $1<p<\infty,$ then $S_1$ is bounded on both $L^p$ and $L^p(\Lambda \otimes\mu);$ moreover,
    $$\|S_1\|_{p\to p}+\|S_1\|_{L^p(\Lambda\otimes \mu)\to L^p(\Lambda\otimes \mu)}\lesssim \|S\|_{L^p(\nu\otimes \mu)\to L^p(\nu\otimes \mu)}.$$
\end{enumerate}
\end{lem}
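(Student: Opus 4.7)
The plan is to combine the bounded overlap of the families $\{B_j\}$ and $\{4B_j\}$ (property (iii) of Lemma \ref{lem:Lem31laptype}) with the local comparability $d\gamma(x_1) \approx e^{-|x_1^j|^2}\,d\Lambda(x_1)$ on $4B_j$ (property (vi)) in order to reduce estimates for $S_1$ with respect to either $\gamma \otimes \mu$ or $\Lambda \otimes \mu$ to the single hypothesis on $S$ with respect to $\nu \otimes \mu$. Setting $f_j(y_1,y_2) := \chi_{4B_j}(y_1) f(y_1,y_2)$ gives $S_1 f(x) = \sum_j \chi_{B_j}(x_1) S f_j(x)$, and bounded overlap of $\{B_j\}$ ensures that at each $x_1$ at most $C$ of the terms in this sum are non-zero.

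For item (i), fix a target measure $\tau \in \{\gamma, \Lambda\}$. Since at most $C$ terms are non-zero at each $x_1$,
$$
(\tau \otimes \mu)(\{|S_1 f| > s\}) \leq \sum_j (\tau \otimes \mu)(\{x \in B_j \times Y \colon |S f_j(x)| > s/C\}).
$$
On $B_j \times Y$ the two measures $\gamma \otimes \mu$ and $\Lambda \otimes \mu$ are comparable up to the scalar $e^{-|x_1^j|^2}$, and the same comparison holds on $4B_j \times Y \supseteq \supp f_j$. Thus, transferring the level-set measure from $\tau$ to $\nu$ at the cost of $e^{\pm |x_1^j|^2}$, applying the weak type $(1,1)$ bound for $S$ with respect to $\nu \otimes \mu$, and transferring the resulting $\|f_j\|_{L^1(\nu \otimes \mu)}$ back to $\tau \otimes \mu$ with the reciprocal factor, the exponentials cancel exactly, yielding
$$
(\tau \otimes \mu)(\{x \in B_j \times Y \colon |S f_j| > s/C\}) \lesssim \frac{\|S\|_{L^1(\nu \otimes \mu) \to L^{1,\infty}(\nu \otimes \mu)}}{s} \int_{4B_j \times Y} |f|\,d(\tau \otimes \mu).
$$
Summing over $j$ and using the bounded overlap of $\{4B_j\}$ completes item (i). Fubini's theorem is used (implicitly) to carry the change-of-measure argument uniformly in $y_2 \in Y$.

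For item (ii), the same decomposition combined with the convexity inequality $|\sum_j \chi_{B_j}(x_1) Sf_j(x)|^p \leq C^{p-1} \sum_j \chi_{B_j}(x_1) |Sf_j(x)|^p$ (another consequence of bounded overlap) reduces the $L^p(\tau \otimes \mu)$ norm of $S_1 f$ to controlling $\sum_j \int_{B_j \times Y} |Sf_j|^p\,d(\tau \otimes \mu)$. Transferring to $\nu \otimes \mu$, invoking the $L^p$-boundedness of $S$, and then transferring $\|f_j\|_{L^p(\nu \otimes \mu)}$ back to $\tau \otimes \mu$ produces the same clean cancellation of the factors $e^{\pm |x_1^j|^2}$ as before; bounded overlap of $\{4B_j\}$ then gives the desired $L^p$ bound for $S_1$. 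The main (minor) subtlety, rather than a real obstacle, is the bookkeeping of these exponential factors and the verification that the passages through Fubini's theorem are legal in the $Y$-variable; this is precisely the ``appropriate use of Fubini's theorem'' alluded to in the statement.
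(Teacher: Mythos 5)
Your proposal is correct and follows essentially the same route the paper intends: the covering/bounded-overlap argument of Lemma \ref{lem:Lem31laptype} combined with the local comparability $\gamma\approx e^{-|x_1^j|^2}\Lambda$ on $4B_j$, with Fubini's theorem used to propagate that comparability to product sets $E\subseteq 4B_j\times Y$ slice by slice in $y_2$ --- which is exactly the ``appropriate use of Fubini's theorem'' the paper alludes to when deferring to \cite[Lemma 3.3]{laptype}. The cancellation of the factors $e^{\pm|x_1^j|^2}$ (up to uniform constants) and the final summation over $j$ via bounded overlap of $\{4B_j\}$ are handled correctly in both items.
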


We proceed with the proof of Proposition \ref{thm:tlocalpart}. Decompose $T=D+\tilde{T},$ where,
\begin{align*}
Df&=\int_{0}^{1}\kappa(r)\,\partial_r [r^{\mathcal{L}}-e^{\frac{1}{4}(1-r^2)\Delta}]\,r^{A}f\,dr,\\
\tilde{T}f&=\int_{0}^{1}\kappa(r)\,\partial_r e^{\frac{1}{4}(1-r^2)\Delta}\, r^{A}f\,dr,
\end{align*}
with $\Delta$ being the self-adjoint extension of the Laplacian on $L^2(\mathbb{R}^d,\Lambda).$ Observe that, by the multivariate spectral theorem applied to the system $(-\Delta,A),$ the operator $\tilde{T}$ is bounded on $L^2(\Lambda\otimes\mu).$ Consequently, $\tilde{T}$ and thus also $D=T-\tilde{T},$ are both well defined on $L_c^{\infty}.$

We start with considering the operator $\tilde{T}.$ First we demonstrate that $$\tilde{T}=\int_{0}^{1}\kappa(r)\,\partial_r e^{\frac{1}{4}(1-r^2)\Delta}\, r^{A}\,dr$$ is a Calder\'on-Zygmund operator on the space of homogeneous type $(\mathbb{R}^d\times Y, \eta, \Leb\otimes \mu);$ recall that $\eta$ is defined by \eqref{sec:OA,eq:eta}. In what follows $\tilde{K}$ is given by
\begin{equation*}
\tilde{K}(x,y)=\int_{0}^{1}\kappa(r)\,\partial_r\mW_r(x_1-y_1)\, r^{A}(x_2,y_2)\,dr,
\end{equation*}
with
\begin{equation}
\label{sec:OA,eq:mWrdef}
\mW_r(x_1,y_1)=\pi^{-d/2}(1-r^2)^{-d/2}\exp\bigg(-\frac{|x_1-y_1|^2}{1-r^2}\bigg).\end{equation}
In the proof of Lemma \ref{lem:calzyg} we often use the following simple bound
$$\int_0^{\infty}t^{-\alpha}\exp(-\beta t^{-1})\,dt\lesssim \beta^{-\alpha+1},\qquad \alpha>1,\quad\beta >0,$$
cf.\ \cite[Lemma 1.1]{StTor}, without further mention.

\begin{lem}
\label{lem:calzyg}
The operator $\tilde{T}$ is a Calder\'on-Zygmund operator associated with the kernel $\tilde{K}.$ More precisely, $\tilde{T}$ is bounded on $L^2(\mathbb{R}^d\times Y, \eta, \Leb\otimes \mu),$ with
\begin{equation}
\label{sec:OA,eq:L2tilT}
\|\tilde{T}\|_{L^2( \Leb\otimes \mu)\to L^2( \Leb\otimes \mu)}\lesssim \ki,
\end{equation}
and its kernel satisfies standard Calder\'on-Zygmund estimates, i.e.\ the growth estimate
\begin{equation}
\label{sec:OA,eq:growth}
|\tilde{K}(x,y)|\lesssim \frac{\ki}{(\Leb\otimes \mu)(B(x,\eta(x,y)))},\qquad x\neq y,
\end{equation}
and, for some $\delta>0,$ the smoothness estimate
\begin{align}
\label{sec:OA,eq:smooth}
|\tilde{K}(x,y)-\tilde{K}(x,y')|\lesssim \left(\frac{\eta(y,y')}{\eta(x,y)}\right)^{\delta}\frac{\ki}{(\Leb\otimes \mu)(B(x,\eta(x,y)))},\qquad 2\eta(y,y')\leq \eta(x,y).
\end{align}
Consequently $\tilde{T}$ is of weak type $(1,1)$ with respect to $\Leb\otimes \mu,$ and
$$(\Leb\otimes \mu)(x\in \mathbb{R}^d\times Y\colon |\tilde{T}f(x)|>s)\leq \frac{C_{d,\mu}\ki}{s}\|f\|_{L^1(Y,\mu)},\qquad s >0.$$
\end{lem}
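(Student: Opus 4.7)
The plan is to reduce the lemma to three ingredients: an $L^2(\Leb\otimes\mu)$ bound coming from the joint spectral calculus of the strongly commuting pair $(-\Delta,A)$; pointwise Calder\'on--Zygmund estimates on the kernel $\tilde K$ derived from Mehler's formula and the Gaussian/Lipschitz hypotheses on the heat kernel of $A$; and finally the Calder\'on--Zygmund theorem on the space of homogeneous type $(\mathbb{R}^d\times Y,\eta,\Leb\otimes\mu)$ to promote these to the weak-type $(1,1)$ bound.

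For (\ref{sec:OA,eq:L2tilT}), I would view $\tilde T$ as a joint spectral multiplier of $(-\Delta,A).$ On the spectral side one has $\partial_r e^{-(1-r^2)\xi/4}=(r\xi/2)e^{-(1-r^2)\xi/4},$ so the associated multiplier is
\[
m(\xi,a)=\int_0^1\kappa(r)\,\frac{r\xi}{2}\,e^{-(1-r^2)\xi/4}\,r^a\,dr.
\]
Using $r^a\le 1$ and the substitution $s=(1-r^2)/4,$ one obtains $|m(\xi,a)|\le \ki\int_0^{1/4}\xi e^{-s\xi}\,ds\le \ki,$ and the multivariate spectral theorem applied to $(-\Delta,A)$ on $L^2(\Leb\otimes\mu)$ yields (\ref{sec:OA,eq:L2tilT}).

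For the growth bound (\ref{sec:OA,eq:growth}), a direct differentiation of $\mW_r$ combined with $te^{-t}\lesssim e^{-t/2}$ gives $|\partial_r \mW_r(x_1-y_1)|\lesssim r(1-r^2)^{-d/2-1}\exp\!\bigl(-c|x_1-y_1|^2/(1-r^2)\bigr).$ Coupling this with (\ref{sec:OA,eq:gausbound}) and using $-\log r\sim 1-r^2$ for $r$ away from zero, the estimate of $\tilde K(x,y)$ is reduced to a one-dimensional integral in $r,$ which I would split at $1-r^2\sim \eta(x,y)^2.$ The elementary bound $\int_0^\infty t^{-\alpha}e^{-\beta/t}dt\lesssim\beta^{1-\alpha}$ for $\alpha>1$ together with the doubling of $\mu$ then produces a bound of the form $\ki/\bigl(\Leb(B(x_1,\eta))\,\mu(B(x_2,\eta))\bigr),$ which matches $(\Leb\otimes\mu)(B(x,\eta))$ in the product metric $\eta.$ For the smoothness (\ref{sec:OA,eq:smooth}), by the triangle inequality it suffices to treat variations in $y_1$ and in $y_2$ separately. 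Differentiating $\partial_r\mW_r$ once more in $y_1$ picks up an extra $(1-r^2)^{-1/2}$ factor (polynomial growth again absorbed into the Gaussian), so the mean-value theorem combined with the hypothesis $2\eta(y,y')\le\eta(x,y)$ and the same $r$-splitting yields the required bound with H\"older exponent $1.$ Variations in $y_2$ are handled via (\ref{sec:OA,eq:heatlipsch})--(\ref{sec:OA,eq:heatlipschngauss}), which supply an extra factor $(\zeta(y_2,y_2')/\sqrt{-\log r})^\delta$ that the same splitting of the $r$-integral converts into the desired H\"older control.

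With the $L^2(\Leb\otimes\mu)$ bound and the Calder\'on--Zygmund kernel estimates established on $(\mathbb{R}^d\times Y,\eta,\Leb\otimes\mu),$ the classical Calder\'on--Zygmund theorem on spaces of homogeneous type produces the weak-type $(1,1)$ bound stated at the end of the lemma. I expect the main technical obstacle to be the bookkeeping in the kernel integrals: the two coordinates of $x$ and $y$ interact only through the joint time parameter $r,$ and the $r$-integral has to be split so as to yield \emph{simultaneously} the correct decay in $|x_1-y_1|$ coming from the Mehler factor and the correct decay in $\zeta(x_2,y_2)$ coming from the heat kernel, with the two resulting volume factors reassembling into $(\Leb\otimes\mu)(B(x,\eta(x,y))).$
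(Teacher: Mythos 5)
Your proposal follows essentially the same route as the paper's proof: the $L^2$ bound via the joint spectral theorem for $(-\Delta,A)$ with an explicitly bounded multiplier, the growth and smoothness estimates obtained by differentiating the Gauss--Weierstrass kernel and combining with \eqref{sec:OA,eq:gausbound}--\eqref{sec:OA,eq:heatlipschngauss} through the doubling property and the elementary bound $\int_0^\infty t^{-\alpha}e^{-\beta/t}\,dt\lesssim\beta^{1-\alpha},$ and finally Calder\'on--Zygmund theory on $(\mathbb{R}^d\times Y,\eta,\Leb\otimes\mu).$ The one detail to make explicit when writing this up is the case split according to whether $\eta(x,y)=|x_1-y_1|$ or $\eta(x,y)=\zeta(x_2,y_2)$: in each case the Gaussian decay must come from the coordinate realizing $\eta,$ while the other coordinate only contributes the non-Gaussian Lipschitz factor, which is precisely why both \eqref{sec:OA,eq:heatlipsch} and \eqref{sec:OA,eq:heatlipschngauss} (and their analogues for $\mW_r$) are needed; your sketch is consistent with this.
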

\begin{proof}
As we have already remarked, by spectral theory $\tilde{T}$ is bounded on $L^2(\Leb\otimes \mu),$ and we easily see that \eqref{sec:OA,eq:L2tilT} holds. Additionally, an argument similar to the one used in the proof of Lemma \ref{lem:kerKopT} shows that $\tilde{T}$ is associated with the kernel $\tilde{K}$ even in the sense of Definition \ref{defi:kernel}.

We now pass to the proofs of the growth and smoothness estimates and start with demonstrating \eqref{sec:OA,eq:growth}. An easy calculation shows that
\begin{equation}
\partial_r\mW_r(x_1-y_1)=\pi^{-d/2}r(1-r^2)^{-d/2-1}\exp\bigg(-\frac{|x_1-y_1|^2}{1-r^2}\bigg)\left[d-2\frac{|x_1-y_1|^2} {1-r^2}\right]. \label{sec:OA,eq:comker}
\end{equation}
Hence, we have for $x_1,y_1 \in\mathbb{R}^d$
\begin{equation}
\label{sec:OA,eq:comestt}
|\partial_r\mW_r(x_1-y_1)|\lesssim r(1-r)^{-d/2-1}\exp\bigg(-\frac{|x_1-y_1|^2}{4(1-r)}\bigg),\qquad 0<r<1.
\end{equation}
For further use we remark that the above bound implies
\begin{equation}
\label{sec:OA,eq:comesttint}
\int_0^{1}|\partial_r\mW_r(x_1-y_1)|\,dr \lesssim |x_1-y_1|^{-d},\qquad x_1,y_1\in\mathbb{R}^d,\quad x_1\neq y_1.
\end{equation}

From \eqref{sec:OA,eq:comestt} we see that
\begin{equation}
\label{sec:OA,eq:comesttt}
|[\partial_r\mW_r(x_1-y_1)]_{r=e^{-t}}|
\leq
\left\{
\begin{array}{rl}
 &C t^{-d/2-1}\exp\bigg(-\frac{|x_1-y_1|^2}{ct}\bigg),\qquad t\leq 1, \\
  & C  e^{-t}\exp\bigg(-c|x_1-y_1|^2\bigg),\qquad t>1.
 \end{array} \right.
\end{equation}
Thus, coming back to the variable $t=-\log r$ and then using \eqref{sec:OA,eq:gausbound}, we arrive at
\begin{align*}
&|\tilde{K}(x,y)|\lesssim \ki \int_0^{\infty}t^{-d/2-1}\exp\bigg(-\frac{|x_1-y_1|^2}{ct}\bigg)\frac{1}{\mu(B(x_2,\sqrt{t}))}\exp\bigg(-\frac{\zeta^2(x_2,y_2)}{ct}\bigg)\,dt.
\end{align*}
A standard argument using the doubling property of $\mu$ (cf. \eqref{sec:OA,eq:double}) shows that we can further estimate
\begin{align*}
&|\tilde{K}(x,y)|\lesssim \frac{\ki}{\mu(B(x_2,\eta(x,y)))} \int_0^{\infty}t^{-d/2-1}\exp\bigg(-\frac{\eta^2(x,y)}{2ct}\bigg)\,dt.
\end{align*}
The last integral is bounded by a constant times $\eta^{d}(x,y),$ which equals $C_d \Leb(B_{|\cdot|}(x_1,\eta(x,y))).$ Thus, \eqref{sec:OA,eq:growth} follows once we note that
$$ \frac{1}{\Leb(B_{|\cdot|}(x_1,\eta(x,y))\mu(B_{\zeta}(x_2,\eta(x,y)))}=\frac{1}{(\Leb\otimes \mu) (B(x,\eta(x,y)))}.$$

We now focus on the smoothness estimate \eqref{sec:OA,eq:smooth}, which is enough to obtain the desired weak type $(1,1)$ property of $\tilde{T}.$ We decompose the difference in \eqref{sec:OA,eq:smooth} as
\begin{equation*}
\tilde{K}(x,y)-\tilde{K}(x,y')=[\tilde{K}(x,y)-\tilde{K}(x,y'_1,y_2)]+[\tilde{K}(x,y'_1,y_2)-\tilde{K}(x,y')]\equiv I_1+I_2.
\end{equation*}
Till the end of the proof of \eqref{sec:OA,eq:smooth} we assume $\eta(x,y)\geq 2 \eta(y,y'),$ so that $\eta(x,y)\approx \eta(x,y').$

We start with estimating $I_2$ and consider two cases. First, let $|x_1-y_1|\leq \zeta(x_2,y_2).$ Then, $\eta(x,y)=\zeta(x_2,y_2)\geq 2\eta(y,y')\geq 2\zeta(y_2,y'_2)$ and consequently, $\zeta(x_2,y'_2)\approx \zeta(x_2,y_2).$ Now, coming back to the variable $t=-\log r$ and using \eqref{sec:OA,eq:comesttt} we have
\begin{align*}
|I_2|\lesssim \ki \int_0^{\infty} t^{-d/2-1}\exp\bigg(-\frac{|x_1-y'_1|^2}{ct}\bigg)\big|e^{-tA}(x_2,y_2)-e^{-tA}(x_2,y'_2)\big|\,dt.
\end{align*}
Hence, from \eqref{sec:OA,eq:heatlipsch} it follows that
\begin{align}
\label{sec:OA,eq:quan}
|I_2|\lesssim \ki \zeta(y_2,y'_2)^{\delta}\int_0^{\infty}t^{-d/2-1-\delta/2}\frac{1}{\mu(B(x_2,\sqrt {t})}\exp\bigg(-\frac{\eta^2(x,y)}{ct}\bigg)\,dt
\end{align}
Using the doubling property of $\mu$ it is not hard to see that
\begin{equation}
\label{sec:OA,eq:double}
\frac{1}{\mu(B(x_2,\sqrt t)}\exp\bigg(-\frac{\eta^2(x,y)}{ct}\bigg)\lesssim \frac{1}{\mu(B(x_2,\eta(x,y)))}\exp\bigg(-\frac{\eta^2(x,y)}{2ct}\bigg),
\end{equation}
and consequently,
\begin{align*}
|I_2|&\lesssim \ki \zeta(y_2,y'_2)^{\delta}\frac{1}{\mu(B(x_2,\eta(x,y)))}\int_0^{\infty}t^{-d/2-1-\delta/2}\exp\bigg(-\frac{\eta^2(x,y)}{3ct}\bigg)\,dt
\\&\lesssim \ki\zeta(y_2,y'_2)^{\delta}\eta(x,y)^{-\delta}\frac{1}{\mu(B(x_2,\eta(x,y)))}(\eta^2(x,y))^{-d/2},
\end{align*}
thus proving that
\begin{equation}
\label{sec:OA,eq:I2}
|I_2|\lesssim \bigg(\frac{\zeta(y_2,y'_2)}{\eta(x,y)}\bigg)^{\delta}\frac{\ki}{(\Leb\otimes\mu)(B(x,\eta(x,y)))}.
\end{equation}

Assume now that $\zeta(x_2,y_2)\leq |x_1-y_1|.$ In this case $\eta(x,y)=|x_1-y_1|>2\eta(y,y')\geq 2|y_1-y'_1|,$ so that $|x_1-y_1|\approx |x_1-y'_1|.$ Hence, proceeding similarly as in the previous case (this time we use \eqref{sec:OA,eq:heatlipschngauss} instead of \eqref{sec:OA,eq:heatlipsch}), we obtain
\begin{align*}
|I_2|\lesssim \ki \zeta(y_2,y'_2)^{\delta} \int_0^{\infty}t^{-d/2-1-\delta/2}\frac{1}{\mu(B(x_2,\sqrt{t}))}\exp\bigg(-\frac{\eta^2(x,y)}{ct}\bigg)\,dt.
\end{align*}
The latter quantity has already appeared in \eqref{sec:OA,eq:quan} and has been estimated by the right hand side of \eqref{sec:OA,eq:I2}.

Now we pass to $I_1.$ A short computation based on \eqref{sec:OA,eq:comker} gives
\begin{equation*}
\pi^{d/2}\partial_{z_j}\partial_r \mW_r (z)=-2r(1-r^2)^{-d/2-2}z_j\left(d+2-2\frac{|z|^2}{1-r^2}\right)\exp\bigg(\frac{-|z|^2}{1-r^2}\bigg),\qquad z\in \mathbb{R}^d.
\end{equation*}
From the above inequality it is easy to see that
$$|\partial_{z_j}\partial_r\mW_{r}(z)|\lesssim r(1-r)^{-d/2-3/2}\exp\bigg(\frac{-|z|^2}{2(1-r^2)}\bigg),$$
and consequently, after the change of variable $e^{-t}=r,$
$$|\partial_{z_j}\partial_r\mW_{r}(z)\big|_{r=e^{-t}}|\lesssim t^{-d/2-3/2}\exp\bigg(\frac{-|z|^2}{ct}\bigg),\qquad 0<t<\infty.$$
Hence, from the mean value theorem it follows that for $|x_1-y_1|\geq 2 |y_1-y'_1|,$
\begin{align}
\label{sec:OA,eq:lipmW}
&|\left[\partial_r\mW_{r}(x_1-y_1)-\partial_r\mW_{r}(x_1-y'_1)\right]_{r=e^{-t}}|\lesssim \frac{|y_1-y'_1|}{\sqrt{t}} \, t^{-d/2-1}\exp\bigg(\frac{-|x_1-y_1|^2}{ct}\bigg)\,
\end{align}
while for arbitrary $x_1,y_1,$
\begin{align}
\label{sec:OA,eq:lipngaussmW}
&|\left[\partial_r\mW_{r}(x_1-y_1)-\partial_r\mW_{r}(x_1-y'_1)\right]_{r=e^{-t}}|\lesssim \frac{|y_1-y'_1|}{\sqrt{t}}\, t^{-d/2-1}.
\end{align}
Moreover, at the cost of a constant in the exponent, the expression $|y_1-y'_1|/\sqrt{t}$ from the right hand sides of \eqref{sec:OA,eq:lipmW} and \eqref{sec:OA,eq:lipngaussmW} can be replaced by $(|y_1-y'_1|t^{-1/2})^{\delta},$ for arbitrary $0<\delta \leq 1.$
If $|y_1-y'_1|\leq \sqrt{t},$ this is a consequence of \eqref{sec:OA,eq:lipmW} and \eqref{sec:OA,eq:lipngaussmW}, while if $|y_1-y'_1|\geq \sqrt{t}$ it can be deduced from \eqref{sec:OA,eq:lipmW} and \eqref{sec:OA,eq:comesttt}. Similarly as it was done for $I_2,$ to estimate $I_1$ we consider two cases.

Assume first $|x_1-y_1|\geq\zeta(x_2,y_2),$ so that $\eta(x,y)=|x_1-y_1|> 2\eta(y,y')\geq 2|y_1-y'_1|$ and $|x_1-y_1|\approx |x_1-y'_1|.$ Therefore, using \eqref{sec:OA,eq:gausbound} and the version of \eqref{sec:OA,eq:lipmW} with $(|y_1-y_1'|t^{-1/2})^{\delta}$ in place of $|y_1-y_1'|/\sqrt{t},$ we obtain
\begin{align*}
|I_1|\lesssim \ki |y_1-y'_1|^{\delta}\int_0^{\infty} t^{-d/2-1-\delta/2}\frac{1}{\mu(B(x_2,\sqrt{t}))}\exp\bigg(-\frac{\eta^2(x,y)}{ct}\bigg)\,dt.
\end{align*}
Almost the same quantity appeared already in \eqref{sec:OA,eq:quan}, thus employing once again previous techniques, we end up with
\begin{equation}
\label{sec:OA,eq:I1}
|I_1|\lesssim \bigg(\frac{|y_1-y'_1|}{\eta(x,y)}\bigg)^{\delta}\frac{\ki}{(\Leb\otimes \mu)(B(x,\eta(x,y)))}.
\end{equation}

Assume now that $|x_1-y_1|<\zeta(x_2,y_2),$ so that $\eta(x,y)=\zeta(x_2,y_2)>2\eta(y,y')\geq 2\zeta(y_2,y'_2)$ and $\zeta(x_2,y_2)\approx \zeta(x_2,y'_2).$ This time, from \eqref{sec:OA,eq:gausbound} and the $\delta$ version of \eqref{sec:OA,eq:lipngaussmW} we have
\begin{align*}
|I_1|\lesssim \ki |y_1-y'_1|^{\delta}\int_0^{\infty} t^{-d/2-1-\delta/2}\frac{1}{\mu(B(x_2,\sqrt{t}))}\exp\bigg(-\frac{\eta^2(x,y)}{ct}\bigg)\,dt,
\end{align*}
which has been already estimated by the right hand side of \eqref{sec:OA,eq:I1}.

Finally, \eqref{sec:OA,eq:smooth} follows after collecting the bounds \eqref{sec:OA,eq:I2} and \eqref{sec:OA,eq:I1}, thus finishing the proof of Lemma \ref{lem:calzyg}.
\end{proof}

Now we focus on the operator $D=T-\tilde{T}.$ Since $T$ and $\tilde{T}$ are associated with the kernels $K$ and $\tilde{K},$ respectively, $D$ is associated with
$$D(x,y)=\int_{0}^1\int_{0}^{1}\kappa(r)\,\partial_r [r^{\mathcal{L}}(x_1,y_1)-e^{\frac{1}{4}(1-r^2)\Delta}(x_1-y_1)]\,r^{A}(x_2,y_2)\,dr.$$
Using \eqref{sec:OA,eq:maxcharrL1}, \eqref{sec:OA,eq:comestt}, and the fact that $\supp \kappa\subseteq [\varepsilon,1-\varepsilon],$  it is not hard to see that
$$\tilde{T}^{glob}f(x)=\int_{\mathbb{R}^d}\int_Y \chi_{N_2^c}\tilde{K}(x,y)f(y)\,d\mu(y_2)\,dy_1,$$
is a well defined and bounded operator on $L^1(\Lambda\otimes \mu).$ Thus,
$$D^{glob}f(x):=T^{glob}f(x)-\tilde{T}^{glob}f(x)=\int_{\mathbb{R}^d}\int_Y \chi_{N_2^c}D(x,y)f(y)\,d\mu(y_2)\,dy_1,$$
is a well defined operator on $L_c^{\infty}.$ Consequently, $D^{loc}f(x):=Df(x)-D^{glob}f(x)$ is also a.e.\ well defined for $f\in L_c^{\infty}.$ Moreover, we have $D^{loc}=T^{loc}-\tilde{T}^{loc},$ where $\tilde{T}^{loc}:=\tilde{T}-\tilde{T}^{glob}.$

We shall need an auxiliary lemma. Recall that $\M_r$ and $\mW_r$ are given by \eqref{sec:OA,eq:MehlformOU} and \eqref{sec:OA,eq:mWrdef}, respectively.
\begin{lem}
\label{lem:diffest}
If $(x_1,y_1)\in N_2,$ then we have
\begin{align}\nonumber
D_I(x_1,y_1)&:=\int_0^1 |\partial_r\mathcal{M}_r(x_1,y_1)-\partial_r\mW_r(x_1-y_1)|\,dr\\
&\leq
\left\{
\begin{array}{rl}
 &C \frac{1+|x_1|}{|x_1-y_1|^{d-1}},  \\
  & C (1+|x_1|)\log \frac{C}{|x_1||x_1-y_1|},
 \end{array} \right.
 {\rm  if  }
\begin{array}{rl}
 & d>1, \\
  & d=1.
 \end{array}
 \label{sec:OA,eq:bounddiffest}
\end{align}
\end{lem}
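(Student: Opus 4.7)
The plan is to exploit, on the local region $N_2$, the algebraic closeness of the exponents of the two kernels. The identity
\begin{equation*}
|rx_1-y_1|^2 - |x_1-y_1|^2 = (1-r)^2|x_1|^2 - 2(1-r)\langle x_1-y_1,x_1\rangle,
\end{equation*}
combined with $|\langle x_1-y_1,x_1\rangle|\le |x_1||x_1-y_1|\le 2$ on $N_2$, shows that the ratio $E_r := \mathcal{M}_r(x_1,y_1)/\mW_r(x_1-y_1)$ satisfies $|E_r|+|E_r|^{-1}\lesssim \exp(c(1-r)|x_1|^2)$, with an analogous bound for $\partial_r E_r$ in terms of $|x_1|$ and $(1-r)|x_1|^2$.

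The first step is to split $\int_0^1 = \int_0^{1/2}+\int_{1/2}^{1}$. On $[0,1/2]$ the prefactors $(1-r^2)^{-d/2-1}$ are bounded, so the explicit formulas \eqref{sec:OA,eq:comM} and \eqref{sec:OA,eq:comker}, together with $|z|^k e^{-c|z|^2}\le C$, give $|\partial_r\mathcal{M}_r|+|\partial_r\mW_r|\lesssim 1+|x_1|$, hence a contribution of size $C(1+|x_1|)$. Because $|x_1-y_1|\le 2$ on $N_2$, this is dominated by the right-hand side of \eqref{sec:OA,eq:bounddiffest} in both cases.

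The main work will be on $[1/2,1)$. The strategy is to write
\begin{equation*}
\partial_r\mathcal{M}_r - \partial_r\mW_r = (\partial_r\mW_r)(E_r-1) + \mW_r\,\partial_r E_r,
\end{equation*}
use the pointwise bounds for $E_r-1$ and $\partial_r E_r$ together with \eqref{sec:OA,eq:comestt} (for $\partial_r\mW_r$) and its elementary analogue for $\mW_r$, and exploit the local-region inequality $(1+|x_1|)|x_1-y_1|\lesssim 1$ to trade the factor $|x_1|$ appearing in the derivatives for $(1-r)^{-1/2}|x_1-y_1|$ absorbed via the Gaussian. The target pointwise estimate is
\begin{equation*}
|\partial_r\mathcal{M}_r - \partial_r\mW_r|\lesssim (1+|x_1|)(1-r)^{-(d+1)/2}\exp\bigl(-c|x_1-y_1|^2/(1-r)\bigr).
\end{equation*}
Via the change of variable $\tau = c|x_1-y_1|^2/(1-r)$, integrating in $r$ reduces to $\int_{c|x_1-y_1|^2}^{\infty}\tau^{(d-3)/2}e^{-\tau}\,d\tau$, which gives $C|x_1-y_1|^{-(d-1)}$ for $d>1$ and is of order $\log(1/|x_1-y_1|)$ for $d=1$. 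The sharper $d=1$ factor $\log(C/(|x_1||x_1-y_1|))$ will be recovered by refining the pointwise bound in the sub-range $1-r\lesssim|x_1|^{-2}$, using the additional Gaussian decay built into $\mathcal{M}_r$ itself through $|rx_1-y_1|^2/(1-r^2)\gtrsim |x_1|^2$, which effectively truncates the $\tau$-integration around $\tau\sim|x_1|^2|x_1-y_1|^2$.

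The main obstacle will be extracting the full half-power of $(1-r)$ in the pointwise estimate. A crude bound $|E_r-1|\lesssim 1$ yields only $(1-r)^{-d/2-1}$ in the integrand and leads to the insufficient singularity $|x_1-y_1|^{-d}$. Separating $E_r-1$ into its bounded $\langle x_1-y_1,x_1\rangle$ part and its $(1-r)|x_1|^2$ part, and in each case using the local constraint to convert $|x_1|$ into $|x_1-y_1|^{-1}$ modulo Gaussian loss, is what yields the improvement. The endpoint analysis in $d=1$, where $|x_1|$ must be inserted inside the log, is where most of the delicate bookkeeping lies.
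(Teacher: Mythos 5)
Your plan is correct in substance but organizes the cancellation differently from the paper, and it is worth recording the comparison. The paper also splits off $[0,1/2]$ and then, on the remaining range, first isolates the contribution of the mixed term $\langle rx_1-y_1,x_1\rangle$ (the piece $J_2$, which already carries the factor $|x_1|(1-r)^{-d/2-1/2}$) and treats what is left, $J_1$, by applying the mean value theorem to the interpolated function $\phi_{r,x_1,y_1}(s)=\bigl(d-\tfrac{2|sx_1-y_1|^2}{1-r^2}\bigr)\exp\bigl(-\tfrac{|sx_1-y_1|^2}{1-r^2}\bigr)$ between $s=r$ and $s=1$, which converts the difference into $(1-r)\sup_s|\phi'(s)|\lesssim(1-r)\cdot|x_1|(1-r)^{-1/2}e^{-c|x_1-y_1|^2/(1-r)}$; the resulting integrals are then quoted from the proof of Lemma~3.9 of the Garc\'ia-Cuerva--Mauceri--Sj\"ogren--Torrea paper, with the intermediate range $1/2<r<1-|x_1|^{-2}$ handled by the crude bound $(1-r)^{-d/2-1}e^{-c|x_1-y_1|^2/(1-r)}$ on each kernel separately. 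You instead factor $\mM_r=\mW_r E_r$ and use the product rule, reducing everything to the expansion $|rx_1-y_1|^2-|x_1-y_1|^2=(1-r)^2|x_1|^2-2(1-r)\langle x_1-y_1,x_1\rangle$; this is the same algebraic identity $rx_1-y_1=(x_1-y_1)-(1-r)x_1$ that powers the paper's mean value argument, so the two routes gain the identical factor $|x_1|(1-r)^{1/2}$ over the crude $(1-r)^{-d/2-1}$ bound. Your version is more self-contained (you do the $\tau$-substitution yourself rather than citing the earlier paper) and makes the mechanism more transparent; the paper's version avoids having to discuss the ratio $E_r$ at all. The one place where your sketch needs genuine care is the regime $(1-r)|x_1|^2\gtrsim1$: there the linearization $|E_r-1|\lesssim(1-r)|x_1|^2+|x_1|\,|x_1-y_1|$ produces the unusable factor $|x_1|^2(1-r)$, and you must instead use $|E_r-1|\lesssim1$ together with $1\leq|x_1|(1-r)^{1/2}$, and likewise $|x_1|^2E_r\lesssim|x_1|(1-r)^{-1/2}$ via the decay $E_r\lesssim e^{-c(1-r)|x_1|^2}$, to reach your target pointwise bound $(1+|x_1|)(1-r)^{-(d+1)/2}e^{-c|x_1-y_1|^2/(1-r)}$; your closing paragraph anticipates exactly this case split, and the ensuing integration (with the further truncation at $1-r\sim|x_1|^{-2}$ for the $d=1$ logarithm) goes through as you describe.
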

\begin{proof}
We proceed similarly to the proof of \cite[Lemma 3.9]{high}.
Since for $(x_1,y_1)$ from the local region $N_2$ we have
\begin{equation}
\label{sec:OA,eq:localreg}
|x_1-y_1|^2+(r-1)^2|x_1|^2-C(1-r)\leq |rx_1-y_1|^2\leq (r-1)^2|x_1|^2+|x_1-y_1|^2+C(1-r),
\end{equation}
therefore
$$\frac{|rx_1-y_1|^2}{1-r^2}\exp\bigg(-\frac{|rx_1-y_1|^2}{1-r^2}\bigg)\lesssim \exp\bigg(-\frac{|x_1-y_1|^2}{2(1-r^2)}\bigg),$$
and
\begin{align}
\label{sec:OA,eq:estmix}&|\langle rx_1-y_1,x_1\rangle|\exp\bigg(-\frac{|rx_1-y_1|^2}{2(1-r^2)}\bigg)\\ \nonumber
&\lesssim |rx_1-y_1|\exp\bigg(-\frac{|x_1-y_1|^2}{4(1-r)}\bigg)|x_1|\exp\bigg(-c(1-r)|x_1|^2\bigg)\lesssim 1.
\end{align}
Thus, using \eqref{sec:OA,eq:comM} we obtain for $(x_1,y_1)\in N_2,$
\begin{equation}\label{sec:OA,eq:comMestt} |\partial_r\,\mM_r(x_1,y_1)|\lesssim (1-r)^{-d/2-1}\exp\bigg(-\frac{|x_1-y_1|^2}{4(1-r)}\bigg),\qquad 0<r<1.\end{equation}
Note that the above inequality implies
\begin{equation}\label{sec:OA,eq:comesttintM} \int_0^1|\partial_r\,\mM_r(x_1,y_1)|\lesssim |x_1-y_1|^{-d},\qquad (x_1,y_1)\in N_2.\end{equation}

Using \eqref{sec:OA,eq:comMestt} and \eqref{sec:OA,eq:comestt} we easily see that
$$\int_0^{1/2}|\partial_r\mathcal{M}_r(x_1,y_1)-\partial_r\mW_r(x_1-y_1)|\,dr\lesssim 1,\qquad (x_1,y_1)\in N_2$$
which is even better then the estimate we want to prove.

Now we consider the integral over $(1/2,1).$ Denoting $r(x_1)=\max (1/2, 1-|x_1|^2)$ and using once again \eqref{sec:OA,eq:comMestt} and \eqref{sec:OA,eq:comestt} we obtain
$$\int_{1/2}^{r(x_1)}|\partial_r\mathcal{M}_r(x_1,y_1)-\partial_r\mW_r(x_1-y_1)|\,dr\lesssim \int_{1/2}^{r(x_1)}(1-r)^{-d/2-1}e^{-\frac{|x_1-y_1|^2}{4(1-r)}}\,dr.$$
The above quantity is exactly the one estimated by the right hand side of \eqref{sec:OA,eq:bounddiffest} in the second paragraph of the proof of \cite[Lemma 3.9]{high}. It remains to estimate the integral taken over $(r(x_1),1).$ Using the formulae \eqref{sec:OA,eq:comM} and \eqref{sec:OA,eq:comker} together with \eqref{sec:OA,eq:estmix} we write
\begin{equation*}
\int_{r(x_1)}^1|\partial_r\mM_r(x_1,y_1)-\partial_r\mW_r(x_1-y_1)|\,dr\lesssim J_1+J_2,
\end{equation*}
with
\begin{align*}
J_1=&\int_{r(x_1)}^1 (1-r)^{-d/2-1}\left|\big[d-\frac{2|rx_1-y_1|^2}{1-r^2}\big]\exp\bigg(-\frac{|rx_1-y_1|^2}{(1-r^2)}\bigg)\right.
\\&-\left.\big[d-\frac{2|x_1-y_1|^2}{1-r^2}\big]\exp\bigg(-\frac{|x_1-y_1|^2}{1-r^2}\bigg)\right|\,dr,\\
J_2=&|x_1|\int_{r(x_1)}^1(1-r)^{-d/2-1/2}\exp\bigg(-\frac{|x_1-y_1|^2}{8(1-r)}\bigg)\,dr.
\end{align*}
The quantity $J_2$ has been already estimated in the proof of \cite[Lemma 3.9, p.12]{high}, thus we focus on $J_1.$ For fixed $r, x_1,y_1$ denote
$$\ph(s)=\phi_{r,x_1,y_1}(s)=\left(d-\frac{2|sx_1-y_1|^2}{1-r^2}\right)\exp\bigg(-\frac{|sx_1-y_1|^2}{1-r^2}\bigg),$$
so that
$$J_1=\int_{r(x_1)}^1 (1-r)^{-d/2-1}|\phi_{r,x_1,y_1}(1)-\phi_{r,x_1,y_1}(r)|\,dr.$$

Since
$$\ph'(s)=2\left(-\frac{(d+2)\langle sx_1-y_1,x_1\rangle}{1-r^2}+2\frac{\langle sx_1-y_1,x_1\rangle|sx_1-y_1|^2}{(1-r^2)^2}\right)\exp\bigg(-\frac{|sx_1-y_1|^2}{1-r^2}\bigg),$$
by using \eqref{sec:OA,eq:localreg} and \eqref{sec:OA,eq:estmix} with $r$ replaced by $s,$ we obtain
\begin{equation*}
|\ph'(s)|\lesssim |x_1|(1-r)^{-1/2}\exp\bigg(-\frac{|x_1-y_1|^2}{8(1-r)}\bigg).
\end{equation*}
Thus, by the mean value theorem
\begin{align*}
|J_1|\lesssim |x_1|\int_{r(x_1)}^1(1-r)^{-d/2-1/2}\exp\bigg(-\frac{|x_1-y_1|^2}{8(1-r)}\bigg)\,dr=J_2.
\end{align*}
Recalling that $J_2$ was estimated before, we conclude the proof.
\end{proof}

As a corollary of Lemma \ref{lem:diffest} we now prove the following.
\begin{lem}
\label{prop:dloc}
The operator $D^{loc}$ is bounded on all the spaces $L^p(\Leb\otimes \mu).$  Moreover,
\begin{equation}
\label{sec:OA,eq:DlocLpbound}
\|D^{loc}\|_{L^p(\Leb\otimes \mu) \to L^p(\Leb\otimes \mu)}\lesssim \ki, \qquad 1\leq p\leq \infty.\end{equation}
\end{lem}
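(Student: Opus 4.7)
The plan is to apply Schur's test to the kernel of $D^{loc},$ which is $\chi_{N_2}(x_1,y_1) D(x,y)$ with
$$D(x,y) = \int_0^1 \kappa(r)\,\bigl[\partial_r\mathcal{M}_r(x_1,y_1) - \partial_r\mW_r(x_1-y_1)\bigr]\, r^A(x_2,y_2)\,dr.$$
Taking absolute values and exploiting the positivity of the kernel $r^A(x_2,y_2),$ and then applying Fubini together with the sub-Markovianity $\int_Y r^A(x_2,y_2)\,d\mu(y_2)\leq 1$ (and the analogous bound with $x_2,y_2$ interchanged, by self-adjointness of $A$), the two Schur-test quantities
$$\sup_x \int |D^{loc}(x,y)|\,d(\Lambda\otimes\mu)(y),\qquad \sup_y \int |D^{loc}(x,y)|\,d(\Lambda\otimes\mu)(x)$$
get bounded by $\|\kappa\|_\infty$ times
$$\sup_{x_1}\int_{\mathbb{R}^d}\chi_{N_2}(x_1,y_1)D_I(x_1,y_1)\,dy_1\quad\text{and}\quad\sup_{y_1}\int_{\mathbb{R}^d}\chi_{N_2}(x_1,y_1)D_I(x_1,y_1)\,dx_1,$$
respectively, where $D_I$ is the quantity from Lemma \ref{lem:diffest}. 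If both are finite by an absolute constant, Schur's test yields the desired $L^p(\Lambda\otimes\mu)$-boundedness with norm $\lesssim \|\kappa\|_\infty$ for every $1\leq p\leq\infty.$

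For $d>1$ these estimates follow readily from Lemma \ref{lem:diffest}: on $N_2$ one has $(1+|x_1|)\approx(1+|y_1|)$ and $|x_1-y_1|\lesssim(1+|x_1|)^{-1},$ so after the change of variables $u=x_1-y_1,$
$$(1+|x_1|)\int_{|u|\lesssim 1/(1+|x_1|)}|u|^{-(d-1)}\,du\lesssim 1,$$
uniformly in $x_1$ (and analogously in $y_1$).

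The main obstacle is the case $d=1,$ where Lemma \ref{lem:diffest} supplies only the weaker logarithmic bound $D_I\lesssim(1+|x_1|)\log(C/(|x_1|\,|x_1-y_1|)).$ One supremum ($\sup_{y_1}\int\cdots\,dx_1$) can still be controlled by splitting into $|y_1|\gtrsim 1$ (the rescaling $v=|y_1|(x_1-y_1)$ makes the logarithm integrable) and $|y_1|\lesssim 1$ (on a bounded region, the singularities $\log(1/|x_1|)$ and $\log(1/|x_1-y_1|)$ are integrable in $x_1$). The other supremum $\sup_{x_1}\int\cdots\,dy_1$ is more delicate near $x_1=0,$ since $\log(1/|x_1|)$ survives as a constant in the $y_1$-integration. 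I expect to handle this by sharpening Lemma \ref{lem:diffest} for small $|x_1|$: since $\partial_r\mathcal{M}_r(0,y_1)=\partial_r\mW_r(-y_1),$ the mean-value theorem applied in the $x_1$-variable should yield an extra factor of $|x_1|$ that cancels the logarithmic singularity, after which the same rescaling argument as in the $d>1$ case applies.
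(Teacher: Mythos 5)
Your proposal is essentially the paper's own argument: Schur's test with weight $1$ for the positive kernel $\chi_{N_2}|D(x,y)|$ is exactly the interpolation between the $L^1(\Leb\otimes\mu)$ and $L^{\infty}(\Leb\otimes\mu)$ endpoint bounds that the paper establishes, using the $L^1(Y,\mu)$ and $L^{\infty}(Y,\mu)$ contractivity of $r^A$ to reduce everything to $\sup_{y_1}\int\chi_{N_2}D_I\,dx_1$ and $\sup_{x_1}\int\chi_{N_2}D_I\,dy_1$. Your worry about $d=1$ is well founded: taken at face value, the bound $D_I\lesssim(1+|x_1|)\log\frac{C}{|x_1|\,|x_1-y_1|}$ gives $\int\chi_{N_2}D_I(x_1,y_1)\,dy_1\gtrsim\log(1/|x_1|)$ as $x_1\to0$, and the paper's one-line appeal to the symmetry of $N_2$ does not by itself dispose of this (the $\log(1/|x_1|)$ factor is constant in the $y_1$-integration); the paper is simply terse here. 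Your proposed repair is sound: since $\mM_r(0,y_1)=\mW_r(-y_1)$, the difference of kernels vanishes at $x_1=0$, and the mean value theorem in $x_1$ yields, for $|x_1|\lesssim|x_1-y_1|$, the bound $D_I\lesssim|x_1|\,|x_1-y_1|^{-d-1}$, which combined with the logarithmic bound on the complementary region $|x_1-y_1|\lesssim|x_1|$ gives a $y_1$-integral that is uniformly bounded near $x_1=0$. So the proposal is correct and, on this one point, more careful than the paper.
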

\begin{proof} Observe that $D^{loc}$ may be expressed as
$$D^{loc}f(x)=\int_{\mathbb{R}^d}\chi_{N_2}(x_1,y_1)\int_0^1\, \kappa(r) \left[\partial_r\mathcal{M}_r(x_1,y_1)-\partial_r\mW_r(x_1-y_1)\right] (r^A f)(y_1,x_2)\,dr\,dy_1,$$
at least for $f\in L_c^{\infty}.$ Moreover, the estimates below imply that the integral defining $D^{loc}$ is actually absolutely convergent, whenever $f\in L^p(\Lambda\otimes\mu),$ for some $1\leq p\leq \infty.$

Using Fubini's theorem, and the $L^1(Y,\mu)$ contractivity of $r^A,$
\begin{align*}
&\|\kappa\|_{\infty} ^{-1}\|D^{loc}f\|_{L^{1}(\Leb\otimes \mu)}\\
&\leq \int_{\mathbb{R}^d}\int_{\mathbb{R}^d}\chi_{N_2}\int_0^1 |\partial_r\mathcal{M}_r(x_1,y_1)-\partial_r\mW_r(x_1-y_1)|\int_{Y}|(r^A|f|)(y_1,x_2)|\, d\mu(x_2)\,dr\,dy_1dx_1\\
&\leq  \int_{Y}\int_{\mathbb{R}^d}\int_{\mathbb{R}^d}\chi_{N_2}D_I(x_1,y_1)|f(y_1,x_2)| \,dy_1dx_1\,d\mu(x_2).
\end{align*}
Now, using Lemma \ref{lem:diffest} it can be shown that the singularity of $\chi_{N_2}D_I(x_1,y_1)$ is integrable in $x_1.$ Moreover, $\int_{\mathbb{R}^d}\chi_{N_2}D_I(x_1,y_1)\,dx_1\leq C,$ where $C$ is independent of $y_1.$ Thus, applying Fubini's theorem we obtain $\|D^{loc}\|_{L^{1}(\Leb\otimes \mu)\to L^{1}(\Leb\otimes \mu)}\leq C\|\kappa\|_{\infty}$. Since in the local region $ |x_1|\leq 2+ |y_1|\leq 4+|x_2|$ and $\chi_{N_2}(x_1,y_1)=\chi_{N_2}(y_1,x_1),$ the singularity of $\chi_{N_2}D_I(x_1,y_1)$ is also integrable in $y_1.$ Hence, using Fubini's theorem and the $L^{\infty}(Y,\mu)$ contractivity of $r^A,$ we have $\|D^{loc}\|_{L^{\infty}(\Leb\otimes \mu)\to L^{\infty}(\Leb\otimes \mu)}\leq C\|\kappa\|_{\infty}.$ Interpolating between the $L^1(\Leb\otimes \mu)$ and $L^{\infty}(\Leb\otimes \mu)$ bounds for $D^{loc}$ we finish the proof of \eqref{sec:OA,eq:DlocLpbound}.
\end{proof}

The last lemma of this section shows that the local parts of $T$ and $\tilde{T}$ inherit their boundedness properties. Moreover, it says that the operators $T^{loc},$ $\tilde{T}^{loc},$ and $D^{loc}$ are bounded on appropriate spaces with regards to both the measures $\Lambda\otimes \mu$
and $\gamma\otimes \mu.$
\begin{lem}
\label{lem:locinher}
Let $S$ denote one of the operators $T,$ $\tilde{T},$ or $D^{loc}$ and let $\nu$ be any of the measures $\gamma$ or $\Lambda.$ Then $S^{loc}$ is bounded on $L^2(\nu\otimes \mu):=L^2(\mathbb{R}^d\times Y,\nu\otimes \mu),$ and
\begin{equation}
\label{sec:OA,eq:locinherL2}
\|S^{loc}\|_{L^2(\nu\otimes \mu)\to L^2(\nu\otimes \mu)}\lesssim \ki.\end{equation}
Moreover, both $S=\tilde{T}^{loc}$ and $S=D^{loc}$ are of weak type $(1,1)$ with respect to $\nu\otimes \mu,$ with $\nu=\gamma$ or $\nu=\Lambda,$ and
\begin{equation}
\label{sec:OA,eq:locinherweakTtil}
(\nu\otimes\mu)(x\in \mathbb{R}^d\times Y\colon |S^{loc}f(x)|>s)\leq\frac{C_{d,\mu}\ki}{s}\|f\|_{L^1(\nu\otimes \mu)}.
\end{equation}
\end{lem}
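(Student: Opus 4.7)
My plan is a two-part reduction that leverages the transfer mechanism of Lemma \ref{lem:Lem33laptype}.

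\textbf{Input $L^p(\Lambda\otimes\mu)$-bounds.} For $S=D^{loc}$, Lemma \ref{prop:dloc} yields $L^p(\Lambda\otimes\mu)$-boundedness for every $p\in[1,\infty]$ with constant $\lesssim\ki$, hence in particular the $L^2(\Lambda\otimes\mu)$ bound and the weak type $(1,1)$ bound with respect to $\Lambda\otimes\mu$. For $S=\tilde T^{loc}$, Lemma \ref{lem:calzyg} produces the analogous bounds for the Calder\'on--Zygmund operator $\tilde T$ itself. For $S=T^{loc}$, only $L^2$ is claimed, and the decomposition $T^{loc}=\tilde T^{loc}+D^{loc}$ (coming from $T=\tilde T+D$) reduces the $L^2$-bound to the previous two cases by linearity.

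\textbf{Transfer to $\gamma\otimes\mu$ via Lemma \ref{lem:Lem33laptype}.} Applying Lemma \ref{lem:Lem33laptype} to $\tilde T$ (resp.\ $D^{loc}$) yields the truncation $S_1=\sum_j\chi_{B_j}(x_1)S(\chi_{4B_j}(y_1)\cdot)$ enjoying the claimed boundedness properties on both $L^2(\gamma\otimes\mu)$ and $L^2(\Lambda\otimes\mu)$, and the weak type $(1,1)$ with respect to both measures, with constants $\lesssim\ki$. It remains to identify $S_1$ with $S^{loc}$ up to an error operator of controlled size. Their kernels differ by $(\chi_{N_2}-\Phi)K_S$, where $\Phi=\sum_j\chi_{B_j}(x_1)\chi_{4B_j}(y_1)$; by parts (iii), (iv), (vii) of Lemma \ref{lem:Lem31laptype}, this multiplier is uniformly bounded, supported in $N_2$, and satisfies $\Phi\geq\chi_{N_{1/7}}$. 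Using the kernel bounds $\int_0^1|\partial_r\mathcal{M}_r|\,dr,\int_0^1|\partial_r\mW_r|\,dr\lesssim|x_1-y_1|^{-d}$ on $N_2$ (see \eqref{sec:OA,eq:comesttint}, \eqref{sec:OA,eq:comesttintM}), Lemma \ref{lem:diffest} for the kernel of $D$, and the Gaussian bounds on $r^A$, a Schur-type argument gives $L^1(\Lambda\otimes\mu)$- and $L^\infty(\Lambda\otimes\mu)$-boundedness of the difference with constants $\lesssim\ki$. Interpolation recovers $L^2$, and weak type $(1,1)$ follows from $L^1$. Since the difference kernel is supported in $N_2$, where $e^{-|x_1|^2}$ and $e^{-|y_1|^2}$ are pointwise comparable, these Lebesgue-measure bounds transfer to the Gaussian case.

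\textbf{Main obstacle.} The heart of the argument is the Schur estimate on the difference kernel. The bare singularity $|x_1-y_1|^{-d}$ is not globally integrable in $\mathbb{R}^d$, so I must exploit the joint structure of the support $N_2$ and the Gaussian decay in $x_2$ coming from the heat kernel of $A$: on $N_2\setminus N_{1/7}$ the lower bound $|x_1-y_1|\gtrsim(1+|x_1|+|y_1|)^{-1}$ is available, and combined with the logarithmic factor from Lemma \ref{lem:diffest} one obtains a uniform integral bound in either variable, following the schemes of \cite[pp.~387--389]{high} and \cite[Lemma 3.3]{laptype}.
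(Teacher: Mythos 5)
Your proposal is correct and follows essentially the same route as the paper: split $S^{loc}$ into the dyadic-ball truncation $\sum_j\chi_{B_j}(x_1)S(\chi_{4B_j}(y_1)f)$, handled by the transfer Lemma \ref{lem:Lem33laptype} applied to the $L^2(\Lambda\otimes\mu)$ and weak $(1,1)$ input bounds from Lemmata \ref{lem:calzyg} and \ref{prop:dloc}, plus an error operator whose kernel multiplier is supported in $N_2\setminus N_{1/7}$ and is controlled by $L^1$/$L^\infty$ Schur estimates (using $|x_1-y_1|^{-d}\lesssim(1+|x_1|)^d$ there and the contractivity of $r^A$) followed by interpolation. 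The only cosmetic deviation is that for $S=T$ the paper applies the same $S_1+S_2$ decomposition directly, using the $L^2(\gamma\otimes\mu)$ boundedness of $T$ as input, whereas you reduce via $T^{loc}=\tilde T^{loc}+D^{loc}$; both are valid.
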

\begin{proof}
In what follows $S(x,y)$ denotes the kernel $K(x,y)$ of $T,$ or the kernel $\tilde{K}(x,y)$ of $\tilde{T},$ or the kernel $D^{loc}(x,y)$ of $D^{loc}.$ Recall that in all the cases the integral defining $S^{glob}f(x)$ is absolutely convergent.

The proof is analogous to the proof of \cite[Proposition 3.4]{laptype}. Let $B_j$ be the family of balls in $\mathbb{R}^d$ from Lemma \ref{lem:Lem31laptype}. Take $f\in L_c^{\infty}$ and, for $x_1\in B_j,$ decompose
\begin{align*}
&S^{loc}f(x)=Sf(x)-S^{glob}f(x)\\
&=S(f\chi_{4B_j}(y_1))(x)+S(f\chi_{(4B_j)^c}(y_1))-\int_{\mathbb{R}^d}\int_Y \chi_{N_2^c}S(x,y)(x_1,y_1)f(y)\, d\mu(y_2)\,dy_1\\
&=S(f\chi_{4B_j}(y_1))(x)+\int_{\mathbb{R}^d}\int_Y \chi_{(4B_j)^c}(y_1)-\chi_{N_2^c}(x_1,y_1))S(x,y)f(y)\, d\mu(y_2)\,dy_1.
\end{align*}

Multiplying by $\chi_{B_j}(x_1)$ and summing over $j,$ we arrive at the inequality
\begin{align*}
S^{loc}f(x)&\leq \int_{\mathbb{R}^d}\sum_j \chi_{B_j}(x_1)|\chi_{N_2}(x_1,y_1)-\chi_{4B_j}(y_1)|\int_{Y}|S(x,y)||f(y)|\,d\mu(y_2)\,dy_1 \\
&+\sum_j \chi_{B_j}(x_1)|S(f(y)\chi_{4B_j}(y_1))(x)|
:= S_2(f)+S_1(f),
\end{align*}
Recall that $T$ is bounded on $L^2,$ while $\tilde{T}$ and $D^{loc}$ are bounded on $L^2(\Lambda\otimes \mu).$ Hence, taking $S$ equal to $T,\tilde{T},$ or $D^{loc},$ and using Lemma \ref{lem:Lem33laptype} we see that in all the considered cases $S_1$ is bounded on $L^2(\nu\otimes \mu),$ and $\|S_1\|_{L^2(\nu\otimes \mu)\to L^2(\nu\otimes \mu)}\lesssim \ki.$ Moreover, from Lemmata \ref{lem:calzyg} and \ref{prop:dloc}, we know that both $\tilde{T}$ and $D^{loc}$ are of weak type $(1,1)$ with respect to $\Lambda \otimes \mu,$ and $$\|\tilde{T}\|_{L^1(\Lambda\otimes \mu)\to L^{1,\infty}(\Lambda\otimes \mu)}+\|D^{loc}\|_{L^1(\Lambda\otimes \mu)\to L^{1,\infty}(\Lambda\otimes \mu)}\lesssim \ki.$$ Consequently, using once again Lemma \ref{lem:Lem33laptype}, we see that in both the cases $S=\tilde{T}$ and $S=D^{loc}$ we have $\|S_1\|_{L^1(\nu\otimes \mu)\to L^{1,\infty}(\nu\otimes \mu)}\lesssim \ki.$

It remains to consider $S_2,$ for which we show boundedness on both $L^1(\nu\otimes \mu)$ and $L^{\infty}(\nu\otimes \mu),$ hence, by interpolation on all $L^p(\nu\otimes \mu)$ spaces, $1\leq p\leq \infty.$
 Here we need the following estimates, valid for $f\in L_c^{\infty};$
\begin{equation}
\label{sec:OA,eq:SL1norm}
\int_{Y}\int_Y|S(x,y)f(y_1,y_2)|\,d\mu(y_2)\,d\mu(x_2)\lesssim \ki |x_1-y_1|^{-d}\|f(y_1,\cdot)\|_{L^1(Y,\mu)},
\end{equation}
where $(x_1,y_1)\in N_2,$ and
\begin{equation}
\label{sec:OA,eq:SLinf}
\int_Y|S(x,y)f(y_1,y_2)|\,d\mu(y_2)\lesssim \ki (1+|x_1|)^{d} \|f(y_1,\cdot)\|_{L^{\infty}(Y,\mu)},
\end{equation}
where $(x_1,y_1)\in N_2\setminus N_{1/7}.$ Recall that $r^A$ is a contraction on both $L^1(Y,\mu)$ and $L^{\infty}(Y,\mu).$ Thus, for $S=T$ the bound \eqref{sec:OA,eq:SL1norm} follows from \eqref{sec:OA,eq:comesttintM}, for $S=\tilde{T}$ it is a consequence of \eqref{sec:OA,eq:comesttint}, while for $S=D^{loc}$ it can be deduced from a combination of both \eqref{sec:OA,eq:comesttintM} and \eqref{sec:OA,eq:comesttint}. To prove  \eqref{sec:OA,eq:SLinf} we use the $L^{\infty}$ contractivity of $r^A$ together with the estimates \eqref{sec:OA,eq:comesttintM}, \eqref{sec:OA,eq:comesttint} and fact that $|x_1-y_1|^{-d}\lesssim (1+|x_1|)^{d}$ for $(x_1,y_1)\in N_2\setminus N_{1/7}$.

We start with the boundedness on $L^1(\nu\otimes \mu)$ and denote $g(y_1)=\|f(y_1,\cdot)\|_{L^1(Y,\mu)},$ and $H(x_1,y_1)=\sum_j \chi_{B_j}(x_1)|\chi_{N_2}(x_1,y_1)-\chi_{4B_j}(y_1)|.$ Lemma \ref{lem:Lem31laptype} vii) together with the definition of $B_j$ imply that $H(x_1,y_1)$ is supported in $N_2\setminus N_{1/7}$. Hence, by Fubini's theorem and \eqref{sec:OA,eq:SL1norm},
\begin{align*}\|S_2(f)(x_1,\cdot)\|_{L^1(Y,\mu)}&\lesssim \ki  \int_{\mathbb{R}^d}H(x_1,y_1)S(x,y)\,|g(y_1)|\,d y_1
\\&\lesssim
\ki  \int_{\mathbb{R}^d}H(x_1,y_1)|x_1-y_1|^{-d}\,|g(y_1)|\,d y_1.
\end{align*}
From that point we proceed exactly as in the $T^2$ part of the proof of \cite[Proposition 3.4]{laptype}, arriving at
$
\|S_2(f)\|_{L^1(\nu\otimes \mu)}\lesssim\int_{\mathbb{R}^d}g(y_1)\,d\nu(y_1)= \|f\|_{L^1(\nu\otimes \mu)}.
$

To finish the proof of Lemma \ref{lem:locinher} it remains to show the  $L^{\infty}(\nu\otimes \mu)$ boundedness of $S_2.$ Setting $g(y_1)=\|f(y_1,\cdot)\|_{L^{\infty}(Y,\mu)}$ and using \eqref{sec:OA,eq:SLinf} it follows that
\begin{equation*}
 |S_2(f)(x_1,x_2)|\lesssim \ki (1+|x_1|)^{d}\int_{|x_1-y_1|\leq \frac2{1+|x_1|}}g(y_1)\,dy_1\lesssim \ki \|f\|_{L^{\infty}(\nu\otimes \mu)},
\end{equation*}
as desired.
\end{proof}

Summarizing, since $T^{loc}=\tilde{T}^{loc}+D^{loc},$ from Lemma \ref{lem:locinher} it follows that the local part $T^{loc}$ is of weak type $(1,1)$ with respect to both $\gamma\otimes \mu$ and $\Lambda \otimes \mu.$ Moreover, the weak type $(1,1)$ constant is less than or equal to $C_{d,\mu}\ki.$ Hence, after combining Propositions \ref{prop:proglob} and \ref{thm:tlocalpart}, the proof of Theorem \ref{thm:OA} is completed.

\subsection*{Acknowledgments}
Most of the material of this paper is a part of the PhD thesis of the author \cite{PhD}. The thesis was written under a cotutelle agreement between Scuola Normale Superiore, Pisa, and Uniwersytet Wroc\l awski, and was jointly supervised by Prof.\ Fulvio Ricci and Prof.\ Krzysztof Stempak. I am grateful to both the advisors for all their help and encouragement. Specifically, I thank Prof.\ Ricci, for suggesting the topic of Section \ref{sec:OA}.

The research was partially supported by Polish funds for sciences, NCN Research Project 2011\slash01\slash N\slash ST1\slash01785.
\appendix
\section{Appendix}
\label{App}
As we observed before, besides being bounded from $L^1_{\gamma}(H^1)$ to $L^{1,\infty}(\gamma \otimes \mu)$ and on $L^2,$ the operator $m_{\kappa}(\mL,A)$ is also bounded on all the $L^p$ spaces, $1<p<2.$ In this appendix we show that the interpolation property remains true for general operators.
\begin{thm}
\label{thm:interH1}
Let $S$ be an operator which is bounded from $L^1_{\gamma}(H^1)$ to $L^{1,\infty}(\gamma\otimes \mu),$ and from $L^2$ to $L^2.$ Then $S$ is bounded on all $L^p$ spaces, $1<p<2.$
\end{thm}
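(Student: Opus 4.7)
The plan is to adapt the classical Marcinkiewicz-type interpolation argument at the Hardy-space endpoint, via a fiberwise Calder\'on--Zygmund decomposition in the $Y$-variable. Since $L^2\cap L^p$ is dense in $L^p$, it suffices to establish $\|Sf\|_{p}\le C_p\|f\|_{p}$ for $f\in L^2\cap L^p$ and $1<p<2$.

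Fix such $f$ and a level $\alpha>0$. For a.e.\ $x_1\in\mathbb{R}^d$ the slice $f(x_1,\cdot)$ lies in $L^p(Y,\mu)$. Applying to it the standard Calder\'on--Zygmund decomposition on the space of homogeneous type $(Y,\zeta,\mu)$ at level $\alpha$ (based on a Whitney covering of the open set $\{y_2\in Y:M_\mu f(x_1,\cdot)(y_2)>\alpha\}$, where $M_\mu$ denotes the Hardy--Littlewood maximal operator in the $y_2$-variable), I decompose
$$f(x_1,y_2)=g_\alpha(x_1,y_2)+\sum_j b_{\alpha,j}(x_1,y_2),$$
with $|g_\alpha|\le C\alpha$ pointwise, and each $b_{\alpha,j}(x_1,\cdot)$ supported in a ball $B_j(x_1,\alpha)\subset Y$, having zero $\mu$-mean and $L^\infty$-norm at most $C\alpha$. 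Thus $b_{\alpha,j}(x_1,\cdot)/\bigl(C\alpha\,\mu(B_j(x_1,\alpha))\bigr)$ is an $H^1$-atom, and writing $b_\alpha(x_1,\cdot)=\sum_j b_{\alpha,j}(x_1,\cdot)$, together with the bounded overlap of the Whitney balls, yields
$$\|b_\alpha(x_1,\cdot)\|_{H^1}\le C\alpha\,\mu(\{M_\mu f(x_1,\cdot)>\alpha\}),\quad \|b_\alpha\|_{L^1_\gamma(H^1)}\le C\alpha\,(\gamma\otimes\mu)(\{M_\mu f>\alpha\}).$$

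Combining the two hypotheses on $S$ with Chebyshev's inequality gives
$$(\gamma\otimes\mu)(\{|Sf|>\alpha\})\le\frac{C}{\alpha^2}\|g_\alpha\|_2^2+\frac{C}{\alpha}\|b_\alpha\|_{L^1_\gamma(H^1)}.$$
Multiplying by $p\alpha^{p-1}$ and integrating in $\alpha\in(0,\infty)$, the layer-cake identity yields
$$\|Sf\|_p^p\lesssim\int_0^\infty\alpha^{p-3}\|g_\alpha\|_2^2\,d\alpha+\int_0^\infty\alpha^{p-2}\|b_\alpha\|_{L^1_\gamma(H^1)}\,d\alpha.$$
The second integral is dominated, via the bound above, by $\int_0^\infty\alpha^{p-1}(\gamma\otimes\mu)(\{M_\mu f>\alpha\})\,d\alpha\lesssim\|M_\mu f\|_p^p\lesssim\|f\|_p^p$, thanks to the $L^p(\gamma\otimes\mu)$-boundedness of $M_\mu$ (which reduces by Fubini to the standard $L^p(Y,\mu)$-boundedness). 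For the first integral, decompose $\|g_\alpha\|_2^2$ according to whether $y_2$ lies in the exceptional set $E_\alpha(x_1)=\bigcup_j B_j(x_1,\alpha)$. Off $E_\alpha(x_1)$ one has $g_\alpha=f$ and $|f|\le M_\mu f\le\alpha$ by Lebesgue differentiation, producing, after Fubini, the contribution $(2-p)^{-1}\|f\|_p^p$. On $E_\alpha(x_1)$ the pointwise bound $|g_\alpha|\le C\alpha$ reduces the contribution to the same $M_\mu$-based integral already handled. Hence both pieces are $\lesssim\|f\|_p^p$, giving the strong $(p,p)$ bound.

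The main (mild) technical point is the joint measurability in $(x_1,\alpha)$ of the fiberwise decomposition. This can be secured by fixing in advance a Christ-type system of dyadic cubes on $(Y,\zeta,\mu)$ and executing the stopping-time selection fiberwise with respect to that fixed enumeration; then the balls $B_j(x_1,\alpha)$ and the functions $g_\alpha$, $b_{\alpha,j}$ depend measurably on $x_1$, and the argument sketched above becomes fully rigorous.
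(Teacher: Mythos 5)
Your overall strategy is the same as the paper's: a fiberwise Calder\'on--Zygmund decomposition in the $Y$-variable at each level $\alpha$, the observation that the bad part is small in $L^1_\gamma(H^1)$, and a layer-cake integration combining the weak $L^1_\gamma(H^1)\to L^{1,\infty}$ bound with the $L^2$ bound. The treatment of the good part and of the measurability issue (via a fixed dyadic system on $(Y,\zeta,\mu)$) also matches the paper. However, there is a genuine gap at the key step: you assert that each bad piece $b_{\alpha,j}(x_1,\cdot)$ has ``$L^\infty$-norm at most $C\alpha$,'' so that $b_{\alpha,j}(x_1,\cdot)/\bigl(C\alpha\,\mu(B_j)\bigr)$ is an $H^1$-atom in the Coifman--Weiss sense (which requires $\|b\|_{L^\infty}\le 1/\mu(B)$). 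This is false for the standard decomposition: the bad piece is $b_j=(f-f_{Q_j})\chi_{Q_j}$, and while the \emph{average} $f_{Q_j}$ is $\lesssim\alpha$, the function $f$ itself is unbounded on $Q_j$ in general, so $b_j$ inherits no $L^\infty$ bound. Only the good part is bounded by $C\alpha$; the bad part is controlled in $L^1$ (and, with more care, in $L^q$), not in $L^\infty$. As written, the normalized bad pieces are not atoms, and the estimate $\|b_\alpha(x_1,\cdot)\|_{H^1}\lesssim\alpha\,\mu(\{M_\mu f(x_1,\cdot)>\alpha\})$ is unjustified.

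The paper repairs exactly this point by working with $L^q$-atoms for an auxiliary exponent $1<q<p$: it runs the stopping-time argument on $|f|^q$ at level $\alpha^q$ (equivalently, uses the maximal function $\mathcal{D}^q(f)=(\mathcal{D}(|f|^q))^{1/q}$), so that the selected cubes satisfy $\bigl(\mu(Q_j)^{-1}\int_{Q_j}|f|^q\,d\mu\bigr)^{1/q}\approx\alpha$. H\"older's inequality then gives $\|b_j(x_1,\cdot)\|_{L^q(Q_j,\mu)}\lesssim\alpha\,\mu(Q_j)^{1/q}$, which makes the normalized bad pieces $H^{1,q}(Y,\mu)$-atoms; the Coifman--Weiss equivalence $H^{1,q}=H^{1,\infty}=H^1$ then yields the needed bound on $\|b_\alpha\|_{L^1_\gamma(H^1)}$. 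You would also need to replace $M_\mu f$ by $(M_\mu(|f|^q))^{1/q}$ throughout (still bounded on $L^p$ since $q<p$), after which the rest of your argument goes through. Without some such device --- $L^q$-atoms, or an explicit truncation of $f$ before decomposing --- the atomic estimate at the heart of your proof does not hold.
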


The main ingredient of the proof is a Calder\'on-Zygmund decomposition of a function $f(x_1,x_2),$ with respect to the variable $x_2,$ when $x_1$ is fixed, see Lemma \ref{lem:calzygdec}. For the decomposition we present it does not matter that we consider $\mathbb{R}^d$ with the measure $\gamma.$ The important assumption is that $(Y,\zeta,\mu)$ is a space of homogeneous type. Therefore till the end of the proof of Lemma \ref{lem:calzygdec} we consider a more general space $L^1:=L^1(X\times Y, \nu \otimes \mu).$ Here $\nu$ is an arbitrary $\sigma$-finite Borel measure on $X.$ Recall that, by convention, elements of $X$ are denoted by $x_1,y_1,$ while elements of $Y$ are denoted by $x_2,y_2.$

It is known that in every space of homogeneous type in the sense of Coifman-Weiss there exists a family of {\it disjoint} 'dyadic' cubes, see \cite[Theorem 2.2]{dyad}. Here we use \cite[Theorem 2.2]{dyad} to $(Y,\zeta,\mu).$ Let $\mathcal{Q}_l$ be the set of all dyadic cubes of generation $l$ in the space $(Y,\zeta,\mu).$ Note that $l\to\infty$ corresponds to 'small' cubes, while $l\to-\infty$ to 'big' cubes. We define the $l$-th generation dyadic average and the dyadic maximal function with respect to the second variable, by
\begin{equation*}
E_{l}f(x)=\sum_{Q\in\mathcal{Q}_l}\frac{1}{\mu(Q)}\int_{Q}f(x_1,y_2)\,d\mu(y_2)\,\chi_{Q}(x_2),\end{equation*}
and
\begin{equation}\label{sec:OA,eq:dyadmaxdef}\mathcal{D}f(x)=\sup_{l} E_{l}|f|(x),\end{equation}
respectively.

We prove the following Calder\'on-Zygmund type lemma.
\begin{lem}
\label{lem:calzygdec}
Fix $s>0$ and let $f\in L^1$ be a continuous non-negative function on $X\times Y.$  Then there exist Borel measurable functions $g$ ('good') and $\{b_j\}$ ('bad') such that  $f=g+b:=g+\sum_{j} {b_j},$  and:
\begin{enumerate}[(i)]
\item $\|g\|_{L^1}+\sum_j \|b_j\|_{L^1}\leq 4\|f\|_{L^1};$
\item $|g(x)|\leq C_{\mu} s,$ for $x=(x_1,x_2)\in X\times Y;$
\item each function $b_j$ is associated with unique dyadic cube $Q_j.$ Moreover, the functions $b_j$ are supported in disjoint measurable sets $S_j=F_j\times Q_j$ such that for each fixed $x_1\in X,$ we have $\sum_{j}\mu (S_j(x_1))\leq s^{-1}\int_Y f(x)\,d\mu(x_2),$ where $S_j(x_1)=\{x_2\colon x\in S_j\}.$ Additionally, for each fixed $j\in\mathbb{Z}$ and $x_1\in X,$ $\int_{Y}b_j(x)\,d\mu(x_2)=0,$ and either, there exists a 'cube' such that $Q_{j(x_1)}=S_j(x_1)$ and $\supp(b_j(x_1,\cdot))\subset Q_{j(x_1)},$ or $S_j(x_1)=\emptyset$ and $b_j(x_1,\cdot)\equiv 0;$
\item If, for fixed $x_1\in X$ the set $S_j(x_1)$ is non empty (hence in view of (iii) $S_j(x_1)=Q_{j(x_1)}$), then
 $$C_{\mu}^{-1}s\leq \frac{1}{\mu(Q_{j(x_1)})}\int_{Q_{j(x_1)}}f(x)\,d\mu(x_2)\leq C_{\mu} s;$$
\item $$\{x\in X\times Y\colon \mathcal{D}(f)(x)>s\}=\bigcup_{j}F_j\times Q_j=\bigcup S_j.$$
\end{enumerate}
\end{lem}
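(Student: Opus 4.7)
The plan is to perform, for each fixed $x_1\in X,$ the standard dyadic Calder\'on-Zygmund stopping time procedure on $f(x_1,\cdot)\in L^1(Y,\mu)$ at level $s,$ and then to repackage the resulting $x_1$-dependent maximal cubes along the fixed countable dyadic grid of $Y,$ so as to obtain functions that are jointly Borel on $X\times Y.$ For each fixed $x_1,$ the assumption $\mu(Y)=\infty$ forces the dyadic averages $E_l f(x_1,\cdot)$ to tend to $0$ as $l\to-\infty,$ so the stopping time yields a countable family of maximal bad dyadic cubes $\{Q_{j(x_1)}\}$ on which the average of $f(x_1,\cdot)$ exceeds $s$ and no larger dyadic ancestor does. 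Such maximality forces in particular the dyadic parent to have average $\le s,$ whence the doubling property of $\mu$ delivers the upper bound in (iv); the weak-type bound $\sum_j\mu(Q_{j(x_1)})\le s^{-1}\|f(x_1,\cdot)\|_{L^1(Y,\mu)}$ gives the measure estimate in (iii), and by the very definition of $\mD$ the union $\bigcup_j Q_{j(x_1)}$ coincides with $\{x_2:\mD f(x_1,\cdot)(x_2)>s\},$ which, after the reindexing below, yields (v).

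To globalize, I would enumerate the countable dyadic grid $\bigcup_l \mathcal{Q}_l$ as $\{Q_j\}$ and put
$$F_j=\Bigl\{x_1\in X:\mathrm{avg}_{Q_j}f(x_1,\cdot)>s\text{ and }\mathrm{avg}_{Q'}f(x_1,\cdot)\le s\text{ for every dyadic }Q'\supsetneq Q_j\Bigr\},\qquad S_j=F_j\times Q_j,$$
$c_j(x_1)=\mu(Q_j)^{-1}\int_{Q_j}f(x_1,y_2)\,d\mu(y_2),$ $b_j(x)=(f(x)-c_j(x_1))\chi_{S_j}(x),$ and $g=f-\sum_j b_j.$ Fubini makes each $c_j$ Borel on $X,$ hence each $F_j$ is Borel (a countable intersection of super/sub-level sets of the $c_{Q'}$'s), and so is $S_j.$ The $S_j$ are pairwise disjoint: this is automatic when $Q_j\cap Q_k=\emptyset,$ while for nested cubes $Q_j\subsetneq Q_k$ no $x_1$ can lie in both $F_j$ and $F_k$ without contradicting maximality of $Q_j.$ The cancellation $\int_Y b_j(x_1,\cdot)\,d\mu=0$ for $x_1\in F_j$ is baked into the definition of $b_j,$ and $b_j(x_1,\cdot)\equiv 0$ when $x_1\notin F_j,$ which gives (iii).

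For (ii), on each $S_j$ the function $g$ equals $c_j(x_1),$ already bounded by $C_\mu s$ by the first paragraph; outside $\bigcup_j S_j$ one has $\mD f(x)\le s,$ and the continuity of $f(x_1,\cdot)$ together with the diameter shrinkage of Christ's cubes at high generation yields $f(x)=\lim_{l\to\infty}E_l f(x_1,\cdot)(x_2)\le \mD f(x)\le s$ pointwise. For (i), the two inequalities $\|b_j\|_{L^1}\le 2\|f\chi_{S_j}\|_{L^1}$ and $\|g\chi_{S_j}\|_{L^1}=\|f\chi_{S_j}\|_{L^1}$ (the latter a direct consequence of $g$ being the $Q_j$-average on $S_j$) give the required norm estimate. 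The main obstacle I expect is the joint measurability bookkeeping: a direct use of the $x_1$-dependent family $\{Q_{j(x_1)}\}$ does not a priori produce Borel functions of $(x_1,x_2),$ which is precisely why the argument must be routed through the fixed, countable dyadic grid of $Y$ with joint measurability recovered via Fubini.
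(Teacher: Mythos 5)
Your proposal is correct and follows essentially the same route as the paper: a slice-wise dyadic Calder\'on--Zygmund stopping time in the $x_2$-variable for each fixed $x_1$, made jointly Borel by organizing the selected maximal cubes along the fixed countable dyadic grid (the paper's sets $F_{\alpha,l}\times Q_\alpha^l$ are exactly your $F_j\times Q_j$, with measurability obtained from continuity of $f$ rather than from Fubini, and with the same use of the parent cube and doubling for the upper bound in (iv) and of the averaging identities for (i) and (ii)). Your explicit remark that $\mu(Y)=\infty$ guarantees the existence of maximal stopping cubes is a useful point that the paper leaves implicit.
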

\begin{proof}
The lemma is intuitively quite clear. The fact we do need to prove is that the decomposition can be done in a 'measurable' way.

Since $f$ is continuous $E_{l}f$ is measurable on $X\times Y.$ Therefore
$$\Omega_l=\left\{x\in X\times Y\colon E_{l}f(x)> s,\quad E_{l'}f(x)\leq s \textrm{ for } l'<l\right\}$$
are measurable subsets of $X\times Y.$ Moreover, the sets $\Omega_l$ are pairwise disjoint and satisfy
\begin{equation}\label{sec:OA,eq:sumOmel} \Omega:=\bigcup_l \Omega_l=\{x\in X\times Y\colon \mathcal{D}(f)(x)>s\}.\end{equation} Setting $\Omega=\bigcup_l\Omega_l$ we see that if $x\in \Omega^c,$ then $f(x)\leq s.$

Observe now that for each fixed $x_1\in X,$ if $z_{Q_{\alpha}^l}$ denotes the center of the cube $Q_{\alpha}^l,$ then $E_{l}f(x)=E_{l}f(x_1,z_{Q_{\alpha}^l}),$ for all $x_2\in Q_{\alpha}^l.$ Therefore, a short reasoning shows that $\Omega_l=\bigcup_{\alpha} F_{\alpha,l} \times Q_{\alpha}^l\equiv \bigcup_{\alpha} S_{\alpha,l},$ where
\begin{equation*}
F_{\alpha,l}=\left\{x_1\in X\colon E_{l}f(x_1,z_{Q_{\alpha}^l})> s,\quad E_{l'}f(x_1,z_{Q})\leq s \textrm{ for } Q\supset Q_{\alpha}^l,\,Q \in \mathcal{Q}_{l'},\,l'<l\right\}.
\end{equation*}
From the continuity of $f$ it follows that the sets $S_{\alpha,l}$ are $\nu \otimes \mu$ measurable. Moreover, $\Omega=\bigcup_{\alpha,l}S_{\alpha,l},$ where the sum runs over $(\alpha,l)$ corresponding to all cubes and the sets $S_{\alpha,l}$ are pairwise disjoint. Hence, recalling \eqref{sec:OA,eq:sumOmel}, we obtain (v).

Note that some of the sets $S_{\alpha,l}$ may be empty, as well as the sets $S_{\alpha,l}(x_1)=\{x_2\colon x\in S_{\alpha,l}\}.$ However, if for some $x_1\in X$ the set $S_{\alpha,l}(x_1)$ is not empty, then $S_{\alpha,l}(x_1)$ coincides with a cube $Q^{\alpha}_l(x_1).$ In fact the just presented construction may be phrased as follows: $x_1\in F_{\alpha,l}$ is and only if the cube $Q^{\alpha}_{l}$ has been chosen as one of the cubes for the Calder\'on-Zygmund decomposition of the function $f(x_1,\cdot).$

Since the set of pairs $(\alpha,l)$ is countable from now on we associate with each $j$ a pair $(\alpha,l)$ and a cube $Q_{\alpha,l}.$ Then $S_j= F_j \times Q_j$ are the sets from (iii). Next we set
\begin{align*}
&g(x)=f(x)\chi_{\Omega^c}+\sum_j\frac{1}{\mu(Q_{j})}\int_{Q_{j}}f(x_1,y_2)\,d\mu(y_2)\,\chi_{S_j}(x),\\
&b(x)=\sum_{j}b_{j}(x)=\sum_{j}\left(f(x)-\frac{1}{\mu(Q_{j})}\int_{Q_{j}}f(x_1,y_2)\,d\mu(y_2)\right)\chi_{S_j}(x),
\end{align*}
so that $f=g+\sum_{j}b_{j}.$ Also, since each set $S_j$ is uniquely associated with the dyadic cube $Q_j,$ the same holds for the functions $b_j.$ Let $x_1\in X$ be fixed. Then either $S_j(x_1)$ is or is not empty. In the second case $S_j(x_1)=Q_{j}(x_1),$ for some cube $Q_{j}(x_1).$ Moreover, the cubes $Q_j(x_1)$ are pairwise disjoint. Hence,
\begin{align*}
&\sum_j\int_{S_j(x_1)}\frac{1}{\mu(Q_{j})}\int_{Q_{j}}f(x_1,y_2)\,d\mu(y_2)\,\chi_{S_j}(x)\,d\mu(x_2)\\
&=\sum_{j\colon S_j(x_1)\neq \emptyset}\int_{Q_j(x_1)}\frac{1}{\mu(Q_{j}(x_1))}\int_{Q_{j}(x_1)}f(x_1,y_2)\,d\mu(y_2)\,\,d\mu(x_2)\leq \int_Y f(x)\,d\mu(x_2)
\end{align*}
and consequently, since $\chi_{S_j}(x)=\chi_{S_j(x_1)}(x_2),$ using Fubini's theorem we obtain
\begin{align*}
&\iint\limits_{X\times Y}\sum_j\frac{1}{\mu(Q_{j})}\int_{Q_{j}}f(x_1,y_2)\,d\mu(y_2)\,\chi_{S_j}(x)\, d\nu(x_1)\,d\mu(x_2)\\
&=\int_X \bigg(\sum_j\int_{S_j(x_1)}\frac{1}{\mu(Q_{j})}\int_{Q_{j}}f(x_1,y_2)\,d\mu(y_2)\,\chi_{S_j}(x)\,d\mu(x_2)\bigg)\,d\nu(x_1)\\
&\leq \int_{X\times Y}f(x_1,y_2)\,d\mu(y_2)\,d\nu(x_1)=\|f\|_{L^1}. 
\end{align*}
From the above we obtain $\|g\|_{L^1}\leq 2\|f\|_{L^1}$ and $\sum_j \|b_j\|_{L^1}\leq  2\|f\|_{L^1},$ thus proving (i).

Now we pass to (ii). Since $|f(x)|\leq s,$ for $x\in \Omega^c$ and the sets $S_j$ are disjoint it suffices to show that,
\begin{equation}
\label{sec:OA,eq:toshow}
\frac{1}{\mu(Q_{j})}\int_{Q_{j}}f(x_1,y_2)\,d\mu(y_2)\chi_{S_j}(x)\leq Cs,\qquad \textrm{ for } x_2\in S_{j}(x_1).
\end{equation} If $x_2\in S_j(x_1),$ then $S_{j}(x_1)=Q_{j}(x_1),$ for some $Q_{j}(x_1)\in\mathcal{Q}_l.$ Moreover, there exists $\tilde{Q}_{j}(x_1)\supset Q_{j}(x_1),$ with $\tilde{Q}_{j}(x_1)\in \mathcal{Q}_{l-1}.$ Then, since $x_2\in S_j(x_1),$  $$\frac{1}{\mu(\tilde{Q}_{j}(x_1))}\int_{\tilde{Q}_{j}(x_1)}f(x_1,y_2)\,d\mu(y_2)=E_{l'}f(x)\leq s.$$ Therefore, a standard argument, based on the doubling property of $\mu,$ gives
$$\frac{1}{\mu(\tilde{Q}_{j}(x_1))}\int_{\tilde{Q}_{j}(x_1)}f(x)\,d\mu(y_2)\leq \frac{C_{\mu}}{\mu(\tilde{Q}_{j}(x_1))}\int_{\tilde{Q}_{j}(x_1)}f(x_1,y_2)\,d\mu(y_2)\leq C_{\mu} s.$$ Hence, \eqref{sec:OA,eq:toshow} and thus also (ii) is proved.

Observe that from the very definition of the sets $S_j$ we have
\begin{equation}
\label{sec:OA,eq:comless}
\frac{1}{\mu(Q_{j})}\int_{Q_{j}}f(x_1,y_2)\,d\mu(y_2)\chi_{S_j}(x)> s,\qquad \textrm{ for } x_2\in S_{j}(x_1).
\end{equation}
Combining the above with \eqref{sec:OA,eq:toshow} we obtain item (iv).

It remains to prove the property (iii). The inequality $\sum_{j}\mu(S_j(x_1))\leq s^{-1}\int_{Y}f(x)\,d\mu(x_2)$ follows from \eqref{sec:OA,eq:comless}. If $S_j(x_1)=\emptyset$ then obviously, $b_j(x_1,\cdot)=0.$ If $S_j(x_1)$ is not empty, then $S_j(x_1)=Q_{j}(x_1),$ for some $j(x_1),$ so that $\supp b_j(x_1,\cdot)\subset Q_{j}(x_1).$ In either case $\int_{Y}b_j(x)\,d\mu(x_2)=\int_{S_j(x_1)}b_j(x)\,d\mu(x_2)=0.$
\end{proof}
Using Lemma \ref{lem:calzygdec} we now prove Theorem \ref{thm:interH1}. The proof follows the scheme from \cite[Theorem D, pp.\ 596, 635--637]{CW} by Coifman and Weiss.
\begin{proof}[Proof of Theorem \ref{thm:interH1}]
Fix $1<q<p$ and set $\mD^q(f)=(\mD(|f|^q))^{1/q},$ with $\mD$ given by \eqref{sec:OA,eq:dyadmaxdef}. Then, since $\mD$ is bounded on $L^p$ and $1<q<p,$ the same is true for $\mD^q.$

Fix a continuous function $0\leq f\in L^p$ and let
\begin{equation}\label{sec:OA,eq:levelset}\Theta^{s}=\{x\colon \mD^{q}(f)>s\}=\{x\colon \mD(f^q)>s^q\}.\end{equation} From item (v) of Lemma \ref{lem:calzygdec} it follows that $$\Theta^{s}=\bigcup_j F_j \times Q_j=\bigcup S_j,$$ where the sets $S_j$ satisfy properties (i)-(iv) from Lemma \ref{lem:calzygdec} with $s^{q}$ in place of $s$ and $f^q$ in place of $f.$ In particular
\begin{equation}
\label{sec:OA,eq:meanprop}
\frac{1}{\mu(Q_j)}\int_{Q_j}f^q\,d\mu(x_2)\approx s^{q},\qquad x_1\in F_j.
\end{equation}

Decompose $f=g_{s}+b_{s}=g_{s}+\sum_j b_{j,s}$ with
\begin{align*}
g_{s}&=g=f(1-\chi_{\Theta^{s}})+\sum_j\frac{1}{\mu(Q_j)}\int_{Q_j}f(x)\,d\mu(x_2)\chi_{S_j}\\
b_{j,s}&=b_j=\left(f(x)-\frac{1}{\mu(Q_j)}\int_{Q_j}f(x_1,y_2)\,d\mu(y_2)\right)\chi_{S_j}.
\end{align*}
If we fix $x_1\in F_j,$ then because $|b_j|\leq |f|+\frac{1}{\mu(Q_j)}\int_{Q_j}f(x_1,y_2)\,d\mu(y_2)\chi_{S_j},$ using \eqref{sec:OA,eq:meanprop} and H\"older's inequality, we obtain
\begin{equation}
\label{sec:OA,eq:dyad-ball}
\begin{split}&\left(\int_{Q_j}|b_j|^{q}\,d\mu(x_2)\right)^{1/q}\\&\leq \left(\int_{Q_j}|f|^{q}\,d\mu(x_2)\right)^{1/q}+\left(\int_{Q_j}\left|\frac{1}{\mu(Q_j)}\int_{Q_j}f(x_1,y_2)\,d\mu(y_2)\right|^{q}\chi_{Q_j}\,d\mu(x_2)\right)^{1/q}\\
&\lesssim s\mu(Q_j)^{1/q}+\left(\int_{Q_j}\frac{1}{\mu(Q_j)}\int_{Q_j}f^q(x_1,y_2)\,d\mu(y_2)\chi_{Q_j}\,d\mu(x_2)\right)^{1/q}\lesssim s \mu(Q_j)^{1/q}.
\end{split}
\end{equation}
Let $\underline{B}(Q_j)$ be the ball included in $Q_j$ from \cite[Theorem 2.2 (2.8)]{dyad}, i.e.\ satisfying
$$\underline{B}(Q_j)\subset Q_j ,\qquad \mu(Q_j)\leq C_{\mu}\,\mu (\underline{B}(Q_j)).$$
Then, from \eqref{sec:OA,eq:dyad-ball} it follows that
$$\left(\frac{1}{\mu(\underline{B}(Q_j))}\int_{\underline{B}(Q_j)}|b_j|^{q}\,d\mu(x_2)\right)^{1/q}\leq C_{\mu} s.$$
Consequently, for each fixed $x_1\in F_j,$ the function $$c_j(x_1,\cdot)=\frac{b_j(x_1,\cdot)}{C_{\mu}s \mu(\underline{B}(Q_j))}$$ is supported in $\underline{B}(Q_j)$ and satisfies \begin{equation}\label{sec:OA,eq:atom}\|c_j(x_1,\cdot)\|_{L^q(Y,\frac{1}{\mu(\underline{B}(Q_j))}d\mu)}\leq \frac{1}{\mu(\underline{B}(Q_j))}.\end{equation}
The above inequality is also trivially satisfied if $x_1\not\in F_j,$ since then $c_j(x_1,\cdot)\equiv 0.$

From \eqref{sec:OA,eq:atom} it follows that for each fixed $x_1\in \mathbb{R}^d$ the functions $c_j(x_1,\cdot)$ are $H^{1,q}(Y,\mu)$-atoms in the sense of Coifman-Weiss \cite[p.\ 591]{CW}, and thus $\|c_j\|_{H^{1,q}(Y,\mu)}=1$. Moreover, from the decomposition $b=\sum_j C_{\mu}s \mu(\underline{B}(Q_j))c_j$ we obtain
$$\|b(x_1,\cdot)\|_{H^{1,q}(Y,\mu)}\leq C_{\mu} s \sum_{j\colon x_1\in F_j}\mu(Q_j)=c_{\mu}s \sum_{j}\mu(S_j(x_1)).$$
Since the spaces $H^{1,q}(Y,\mu)$ and $H^{1}(Y,\mu)=H^{1,\infty}(Y,\mu)$ coincide, cf. \cite[Theorem A]{CW}, using Fubini's theorem and the disjointness of $S_j$ we obtain
\begin{equation}
\label{sec:OA,eq:H1part}
\|b\|_{L^1_{\gamma}(H^1)}=\int_{\mathbb{R}^d}\|b(x_1,\cdot)\|_{H^{1}(Y,\mu)}\,d\gamma(x_1)\lesssim s \sum_{j}(\gamma\otimes \mu)(S_j)=C s (\gamma\otimes \mu)(\Theta^{s}).
\end{equation}

By the layer-cake formula we have
\begin{equation*}
p^{-1}\|Sf\|_{L^p}=\int_0^{\infty}s^{p-1}(\gamma\otimes \mu)(x\colon |Sf(x)|>s)\,ds,
\end{equation*}
and, consequently,
\begin{align*} &\|Sf\|_{L^p}\lesssim\int_0^{\infty}s^{p-1}(\gamma\otimes \mu)(x\colon |Sb_{s}(x)|>s/2)\,ds+
\int_0^{\infty}s^{p-1}(\gamma\otimes \mu)(x\colon |Sg_{s}(x)|>s/2)\,ds\\
&:= E_1+E_2.
\end{align*}
To estimate $E_1$ we use the weak type property of $S$ and \eqref{sec:OA,eq:H1part}, obtaining
\begin{equation}
\label{sec:OA,eq:thtaest}
E_1\lesssim\int_0^{\infty}s^{p-2}\|b_{s}\|_{L^1_{\gamma}(H^1)}\,ds\lesssim\int_0^{\infty}s^{p-1} (\gamma\otimes \mu)(\Theta^{s})\,ds=\|\mD^q(f)\|_p^p\lesssim\|f\|_{p}^p.
\end{equation}

Passing to $E_2,$ the layer-cake formula together with the $L^2$ boundedness of $S$ and Chebyshev's inequality produce
\begin{align*}
&p^{-1}E_2\lesssim C \int_0^{\infty}s^{p-3}\|g_{s}\|_2^2\,ds\\
&=\int_0^{\infty}s^{p-3}\int_{\Theta^{s}}|g_{s}|^2\,d\gamma\,d\mu\,ds+\int_0^{\infty}s^{p-3}\int_{(\Theta^{s})^c}|g_{s}|^2\,d\gamma\,d\mu\,ds:= E_{2,1}+E_{2,2}.
\end{align*}
From \eqref{sec:OA,eq:levelset}, \eqref{sec:OA,eq:meanprop} and the definition of $g_{s}$ we see that
$|g_{s}|\leq C s,$ and consequently,
\begin{align*}
E_{2,1}\leq \lesssim\int_0^{\infty}s^{p-1}(\gamma\otimes \mu)(\Theta^{s})\,ds.
\end{align*}
The above quantity has already been estimated, see \eqref{sec:OA,eq:thtaest}. Now we focus on $E_{2,2}.$ Since $g_{s}=f$ outside of $\Theta^{s}$ and $f\leq\mD^q(f),$ using Fubini's theorem we have
\begin{align*}
E_{2,2}\lesssim\int_{\mathbb{R}^d\times Y}|f(x)|^2\int_{f}^{\infty}s^{p-3}\,ds \,d(\gamma\otimes \mu)\lesssim\int_{\mathbb{R}^d\times Y}|f(x)|^p\,d(\gamma\otimes \mu),
\end{align*}
thus obtaining the desired estimate for $E_2$ and hence, finishing the proof of Theorem \ref{thm:interH1}.
\end{proof}


\begin{thebibliography}{99}
\bibitem{Al} D.\ Albrecht, \textit{Functional calculi of commuting unbounded operators}, PhD thesis, Monash University,
Australia (1994).
\bibitem{AlFrMc} D.\ Albrecht, E.\ Franks, and A.\ McIntosh, \textit{Holomorphic functional calculi and sums of commuting operators}, Bull.\ Aust.\ Math.\ Soc.\ 58 (1998), 291--305.
\bibitem{s1} G.\ Alexopoulos, \textit{Spectral multipliers on Lie groups of polynomial growth}, Proc.\ Amer.\ Math.\ Soc.\, (3) 120 (1994), 973--979.
\bibitem{s2} G.\ Alexopoulos, \textit{Spectral multipliers for Markov chains}, J.\ Math.\ Soc.\ Japan (3) 56 (2004), 833--852.
\bibitem{Carb-Drag}
{\sc A. Carbonaro, O. Dragi\v{c}evi\'c}:
{\it Functional calculus for generators of symmetric contraction semigroups}, preprint (2013),
{\tt http://arxiv.org/abs/1308.1338}
\bibitem{CRW} R. R. Coifman, R. Rochberg and G. Weiss, \textit{Applications of transference: The $L^p$ version of von Neumann's
inequality and Littlewood-Paley-Stein theory}, Linear Spaces and Approximation, Birkh\"auser, Basel, 1978, 53--67.
\bibitem{CW} R. Coifman, G. Weiss, \textit{Extensions of Hardy spaces and their use in analysis}, Bull. Amer.
Math. Soc. 83 (1977), 569--615.
\bibitem{Hanonsemi} M. G. Cowling, \textit{Harmonic analysis on semigroups}, Ann. of Math. 117 (1983), 267-283.
\bibitem{CowDouMcYa} M. Cowling, I. Doust, A. McIntosh and A. Yagi, \textit{Banach space operators with a bounded $H^{\infty}$ functional
calculus}, Journal of Aust. Math. Society (Series A) 60 (1996), 51--89.
\bibitem{Hanonultracon} M. G. Cowling, S. Meda, \textit{Harmonic Analysis and Ultracontractivity}, Trans. Amer. Math. Soc. (2) 340 (1993), 733-752.
\bibitem{s4} M.\ G.\ Cowling, A.\ Sikora, \textit{A spectral multiplier theorem on $SU(2)$}, Math.\ Z.\ (1) 238 (2001), 1--36.
\bibitem{s6} X.\ T.\ Duong, E.\ M.\ Ouhabaz, and A.\ Sikora, \textit{Plancherel type estimates and sharp spectral multipliers}, J.\ Funct.\ Anal.\ (2) 196 (2002), 443--485.
\bibitem{dpw} J. Dziuba\'nski, M. Preisner, and B. Wr\'obel, \textit{Multivariate H\"ormander-type multiplier theorem for the Hankel transform}, J{.} Fourier Anal{.} Appl{.}
(2) 19 (2013), 417-437.
\bibitem{Fr1} A.\ J.\ Fraser, \textit{Marcinkiewicz multipliers on the Heisenberg group}, PhD thesis, Princeton University, 1997.
\bibitem{Fr2} A.\ J.\ Fraser, \textit{An (n+1)-fold Marcinkiewicz multiplier theorem on the Heisenberg group}, Bull.\ Austral.\ Math.\ Soc.\ 63 (2001), 35--58.
\bibitem{Fr3} A.\ J.\ Fraser, \textit{Convolution kernels of (n+1)-fold Marcinkiewicz multipliers on the Heisenberg
group}, Bull.\ Austral.\ Math.\ Soc.\ (3) 64 (2001), 353--376.
\bibitem{funccalOu} J. Garc\'ia-Cuerva, G. Mauceri, P. Sj\"{o}gren and J. L. Torrea, \textit{Functional Calculus for the Ornstein Uhlenbeck Operator}, J. Funct. Anal. 183 (2001), 413-450.
\bibitem{high}   J. Garc\'ia-Cuerva, G. Mauceri, P. Sj\"{o}gren, J. L. Torrea, \textit{Higher-Order Riesz Operators for the Ornstein-Uhlenbeck Semigroup}, Potential Anal. 10 (1999), 379-407.
\bibitem{laptype} J. Garc\'ia-Cuerva, G. Mauceri, P. Sj\"{o}gren, J. L. Torrea, \textit{Spectral multipliers for the Ornstein-Uhlenbeck semigroup}, J. Anal. Math. 78 (1999), 281-305.
\bibitem{s7} W.\ Hebisch, \textit{A multiplier theorem for Schr\"odinger operators}, Colloq.\ Math.\ (2) 60/61 (1990), 659--664.
\bibitem{hmm} W. Hebisch, G. Mauceri, S. Meda, \textit{Holomorphy of spectral multipliers of the
Ornstein-Uhlenbeck operator}, J. Funct. Anal. 210 (2004), 101-124.
\bibitem{Horm1} L.\ H\"{o}rmander, \textit{Estimates for translation invariant operators in $L^p$ spaces}, Acta Math.\ (1) 104 (1960), 93--140.
\bibitem{dyad} T. Hyt\"onen, A. Kairema, \textit{Systems of dyadic cubes in a doubling metric space}, Colloq. Math. 126 (2012), 1--33.
\bibitem{LanLanMer} F.\ Lancien, G.\ Lancien, and C.\ Le Merdy,  \textit{A joint functional calculus for sectorial operators
with commuting resolvents}, Proc.\ London Math.\ Soc.\ (2) 77 (1998), 387--414.
\bibitem{2} N. N. Lebedev, \textit{Special Functions and Their Applications}, Dover, New York, 1972.
\bibitem{Marorg} J.\ Marcinkiewicz, \textit{Sur les multiplicateurs des s\'eries de Fourier}, Studia Math.\ (1) 8 (1939), 78--91.
\bibitem{vectvalMM} G.\ Mauceri, S.\ Meda,  \textit{Vector-valued multipliers on stratified groups}, Rev.\ Mat.\ Iberoamericana (3-4) 6 (1990), 141--154.
\bibitem{sharp} G.\ Mauceri, S.\ Meda, and P.\ Sj\"{o}gren, \textit{Sharp estimates for the Ornstein-Uhlenbeck operator}, Ann.\ Sc.\ Norm.\ Super.\ Pisa Cl.\ Sci.\ 3 (2004), 447--480.
\bibitem{Martini_Phd} A.\ Martini, \textit{Algebras of differential operators on Lie groups and spectral multipliers}, PhD thesis, Scuola Normale Superiore, Pisa, Italy (2009), arXiv:1007.1119.
\bibitem{Martini_JFA} A.\ Martini, \textit{Spectral theory for commutative algebras of differential operators on Lie groups}, J.\ Funct.\ Anal.\ (9) 260 (2011), 2767--2814.
\bibitem{Martini_Annales}  A.\ Martini, \textit{Analysis of joint spectral multipliers on Lie groups of polynomial growth}, Ann.\ Inst.\ Fourier (4) 62 (2012), 1215--1263.
\bibitem{Meda1} S. Meda, \textit{A general multiplier theorem}, Proc. Amer. Math. Soc. (3) 110 (1990), 639--647.
\bibitem{Mu:RiSt1} D.\ M\"uller, F.\ Ricci, and E.\ M.\ Stein, \textit{Marcinkiewicz multipliers and multi-parameter structure
on Heisenberg (-type) groups I.}, Invent.\ Math.\, (2) 119 (1995), 199--233.
\bibitem{Mu:RiSt2} D.\ M\"uller, F.\ Ricci, and E.\ M.\ Stein, \textit{Marcinkiewicz multipliers and multi-parameter structure
on Heisenberg (-type) groups. II.}, Math.\ Z.\, (2) 221 (1996), 267--291.
\bibitem{schmu:dgen} K. Schm\"udgen, \textit{Unbounded Self-adjoint Operators on Hilbert Space}, Grad. Texts in Math. 265 (2012).
\bibitem{Sik} A. Sikora, \textit{Multivariate spectral multipliers and analysis of quasielliptic operators on fractals}, Indiana Univ. Math. J., 58 (2009), 317--334.
\bibitem{Sj1} P. Sj\"ogren, \textit{On the maximal function for the Mehler kernel}, in Harmonic Analysis,
Cortona 1982, Springer Lecture Notes in Mathematics 992 (1983), 73-82.
\bibitem{singular} E. M. Stein, \textit{Singular Integrals and Differentiability Properties of Functions}, Princeton Univ. Press, Princeton, 1971.
\bibitem{topics} E. M. Stein, \textit{Topics in Harmonic Analysis Related to the Littlewood-Paley Theory}, Annals Math. Studies, Princeton Univ. Press, 1970.
\bibitem{StTor} K.\ Stempak, J.\ L.\ Torrea, \textit{Poisson integrals and Riesz transforms for Hermite function expansions with weights}, J.\ Funct.\ Anal.\ 202 (2003), 443--472.
\bibitem{Thanherm} S.\ Thangavelu, \textit{Multipliers for Hermite expansions}, Rev.\ Mat.\ Iberoam.\ 3 (1987), 1--24.
\bibitem{Uchi} A. Uchiyama \textit{A Maximal Function Characterization of $H^p$ on the Space of Homogeneous Type}, Trans. Amer. Math. Soc. 262 (1980) no. 2, 579--592.
\bibitem{jaRieszGen} B. Wr\'obel, \textit{Dimension free $L^p$ estimates for single Riesz transforms via an $H^{\infty}$ joint functional calculus}, J.\ Funct.\ Anal.\ (9) 267 (2014), pp.\ 3332--3350.
\bibitem{PhD} B.\ Wr\'obel, \textit{Multivariate spectral multipliers}, PhD thesis, Scuola Normale Superiore, Pisa and Uniwersytet Wroc\l awski (2014), http://arxiv.org/abs/1407.2393.
\bibitem{jaOU} B. Wr\'obel, \textit{Multivariate spectral multipliers for systems of Ornstein-Uhlenbeck operators}, accepted in Studia Math. (2013).
\bibitem{erra} B. Wr\'obel, \textit{Erratum to: Multivariate spectral multipliers for tensor product orthogonal expansions}, Monatsh. Math. 169 (2013), 113--115.
\bibitem{ja} B. Wr\'obel, \textit{Multivariate spectral multipliers for tensor product orthogonal expansions},  Monatsh. Math. 168 (2012), 124--149.
\end{thebibliography}
\end{document}